\newcommand{\googlebooks}[1]{(preview at \href{https://books.google.com/books?id=#1}{google books})}
\newcommand{\floor}[1]{\lfloor #1 \rfloor}
\newcommand{\numdam}[1]{}
\def\semicolon{;}
\def\applytolist#1{
    \expandafter\def\csname multi#1\endcsname##1{
        \def\multiack{##1}\ifx\multiack\semicolon
            \def\next{\relax}
        \else
            \csname #1\endcsname{##1}
            \def\next{\csname multi#1\endcsname}
        \fi
        \next}
    \csname multi#1\endcsname}
\def\calc#1{\expandafter\def\csname c#1\endcsname{{\mathcal #1}}}
\theoremstyle{plain}
\newtheorem{thm}{Theorem}[section]
\newtheorem*{thm*}{Theorem}
\newtheorem{cor}[thm]{Corollary}
\newtheorem*{cor*}{Corollary}
\newtheorem*{conj*}{Conjecture}
\newtheorem*{hypothesis*}{Hypothesis}
\newtheorem*{philosophy*}{Philosophy}
\newtheorem{lem}[thm]{Lemma}
\newtheorem{prop}[thm]{Proposition}
\newtheorem*{quest*}{Question}
\newtheorem*{claim*}{Claim}
\theoremstyle{definition}
\newtheorem{defn}[thm]{Definition}
\newtheorem{ex}[thm]{Example}
\newtheorem{sub-ex}[thm]{Sub-Example}
\newtheorem{rem}[thm]{Remark}
\newtheorem*{rem*}{Remark}
\title{Remarks on anomalous symmetries of C*-algebras}
\author{Corey Jones }
\date{}
\begin{document}

\maketitle

\begin{abstract}

For a group $G$ and $\omega\in Z^{3}(G, \text{U}(1))$, an $\omega$-anomalous action on a C*-algebra $B$ is a $\text{U}(1)$-linear monoidal functor between 2-groups $\text{2-Gr}(G, \text{U}(1), \omega)\rightarrow \underline{\text{Aut}}(B)$, where the latter denotes the 2-group of $*$-automorphisms of $B$. The class $[\omega]\in H^{3}(G, \text{U}(1))$ is called the anomaly of the action. We show that for every $n\ge 2$ and every finite group $G$, every anomaly can be realized on the stabilization of a commutative C*-algebra $C(M)\otimes \mathcal{K}$ for some closed connected $n$-manifold $M$. We also show that although there are no anomalous symmetries of Roe C*-algebras of coarse spaces, for every finite group $G$, every anomaly can be realized on the Roe corona $C^{*}(X)/\mathcal{K}$ of some bounded geometry metric space $X$ with property $A$. 
    
\end{abstract}

\section{Introduction.}

C*-algebras are often used to describe the structure of observables in a quantum system. Symmetries of the observable algebra correspond to global symmetries of the physical system. However, the observable algebra may have \textit{anomalous symmetries}, which are physically characterized by their inability to be represented covariantly as unitary operators on any Hilbert space representation of the system \cite{MR903016, MR1324403} (see Section \ref{physicalinterpretation}). In many situations, this type of anomaly can be described by a class $[\omega]\in H^{3}(G,\text{U}(1))$ which we call a \textit{phase anomaly}. Phase anomalies are responsible for a variety of interesting phenomena in quantum field theory and condensed matter physics (for example see Section \ref{symmetrybreaking}).

Abstractly, it is convenient to describe anomalous symmetries of C*-algebras in the language of higher categories. A \textit{2-group} is a monoidal category where all objects and morphisms are invertible. Given a group $G$ and a 3-cocycle $\omega\in Z^{3}(G, \text{U}(1))$, this defines a 2-group $\text{2-Gr}(G, \text{U}(1), \omega)$ (see Section \ref{2GrandAnom} for definitions). Associated with a C*-algebra $B$ is the $\text{U}(1)$-linear 2-group of *-automorphisms $\underline{\text{Aut}}(B)$. An \textit{$\omega$-anomalous action} of $G$ on $B$ is then defined to be a $\text{U}(1)$-linear monoidal functor $\text{2-Gr}(G, \text{U}(1), \omega)\rightarrow \underline{\text{Aut}}(B)$. The class $[\omega]\in H^{3}(G, \text{U}(1))$ is called the \textit{anomaly} of the symmetry. We can unpack this categorical definition to obtain an elementary description as follows:

\begin{defn}\label{Explicitdefn} Let $G$ be a group and $\omega\in Z^{3}(G, \text{U}(1))$. An $\omega$-anomalous action on a C*-algebra $B$ is an assignment of a $*$-automorphism $\alpha_{g}\in \text{Aut}(B)$ for each $g\in G$, and for each pair $g,h\in G$, a unitary $m_{g,h}\in M(B)$ subject to the following conditions:

\begin{enumerate}
    \item  $m_{g,h}\alpha_{g}(\alpha_{h}(x))=\alpha_{gh}(x)m_{g,h}$ for all $x\in B$.
    \item
    $\omega(g,h,k)m_{gh,k}m_{g,h}=m_{g,hk}\alpha_{g}(m_{h,k})$.
\end{enumerate}
\end{defn}

The study of anomalous symmetries in the theory of operator algebras is not new \cite{MR394228, MR448101, MR548118, Jo1980, Su1980, MR1324403, MR1798596}. Anomalies have appeared in this context as obstructions to the existence of a twisted (also called ``cocycle") action of $G$ on $B$ that lifts a homomorphism $G\rightarrow \text{Out}(B)$\footnote{These homomorphisms are called G-kernels in the literature. See Section \ref{physicalinterpretation} for a more detailed explanation of the relationship between anomalies and lifting obstructions.}. Our point of view centered on 2-group actions fits into the more general higher categorical approach to operator algebra symmetries using correspondences \cite{MR3056650, MR2986862}.  Examples of anomalous actions of groups on C*-algebras arise from general constructions of unitary tensor category actions on C* and von Neumann algebras that have emerged from subfactor theory \footnote{See Remark \ref{relationwithfusion} for a discussion of the relationship between anomalous actions and actions of pointed unitary fusion categories on C*-algebras.} (e.g. \cite{Oc1988, Iz1993, MR1334479, Iz1998, LoRo1997, Mg2003, MR2732052, MR4166625, MR3959065, MR4139893, MR3935804, MR3663592}).

Many of the C*-algebras that are used in physical applications are closely related to classical topology and geometry. For example, algebras arising from bundles of matrix algebras over manifolds appear in noncommutative approaches to the Standard Model (e.g. \cite{MR1463819, MR1303779}), and in the theory of T-duality \cite{MR2116734, MR2560910}. Roe C*-algebras of metric spaces spaces have found applications in the theory of topological band structures in condensed matter physics \cite{MR3927086, MR3594362}.
This motivates us to study anomalous symmetries for these classes of C*-algebras, since they may have interesting physical interpretations. However, the most well known constructions of anomalous symmetries of C*-algebras, using ideas from subfactor theory, produce algebras which are more closely related to combinatorial structures like finite graphs than to topological or geometric spaces. We therefore require different approaches for building anomalous group actions on wider classes of C*-algebras in order to address basic existence questions.

In this paper, we adapt the algebraic techniques originally introduced by Eilenberg and Maclane \cite{MR19092, MR20996, MR33287} to build $\omega$-anomalous actions on crossed product C*-algebras using ordinary group actions together with certain cohomological data (see Theorem \ref{constructingtheaction}). These ideas have already been used effectively to build anomalous actions of groups on von Neumann algebras \cite{MR548118, Su1980}. They play a central role in the existence portion of V. Jones' classification of finite group actions on the hyperfinite $\rm{II}_{1}$ factor $R$, a result which implies all anomalies of all finite groups are realizable on $R$ in an essentially unique way \cite{Jo1980}. 

While the construction provided by Theorem \ref{constructingtheaction} is widely applicable, we focus on two main classes of C*-algebras: commutative C*-algebras $C(X)$ and their stabilizations, where $X$ is a compact connected Hausdorff space; and Roe C*-algebras of bounded geometry metric spaces and their quotients, which play an important role in coarse geometry. In both cases, we find some obstructions to the existence of anomalous actions, and conversely, general constructions that show anomalous actions are quite common.

First we consider the case of commutative C*-algebras. It is easy to find examples of anomalous finite group actions on the commutative C*-algebra $C(X)$ for $X$ a finite discrete space: finitely semi-simple module categories over the pointed unitary fusion categories $\text{Hilb}(G, \omega)$ provide natural examples, with $X$ the finite set of isomorphism classes of simple objects. It is more difficult to find examples with connected spectrum $X$, even after passing to the stabilization $C(X)\otimes \mathcal{K}$, where $\mathcal{K}$ is the C*-algebra of compact operators on a separable Hilbert space. We have the following no-go theorem, which gives topological obstructions to the existence of anomalous actions:

\begin{thm}\label{noactionsonmanifolds}
Let $X$ be a compact connected, locally path connected Hausdorff space. If $H^{1}(X, \mathbbm{Z})=0$,\footnote{$H^{k}(X, \mathbbm{Z})$ of a topological space $X$ will always denote singular cohomology in this paper.} then there are no anomalous actions on $C(X)$ for any finite group $G$. If in addition $X$ has no non-trivial complex line bundles\footnote{If $X$ has the homotopy type of a CW-complex, this is equivalent to $H^{2}(X, \mathbbm{Z})=0$} (e.g. homology spheres of dimension $n\ge 3$), there are no anomalous actions of any finite groups on the stabilization $C(X)\otimes \mathcal{K}$.
\end{thm}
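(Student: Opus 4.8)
The plan is to force the anomaly class $[\omega]$ to vanish already in the larger cohomology group $H^{3}(G, C(X,\mathbbm{T}))$, where $C(X,\mathbbm{T})$ denotes the unitary group of the center $Z(M(B))$ --- which equals $C(X)$ both for $B=C(X)$ and for $B=C(X)\otimes\mathcal{K}$ --- viewed as a $G$-module through the action; and then to use the hypothesis $H^{1}(X,\mathbbm{Z})=0$ to show that the change-of-coefficients map $H^{3}(G,\mathbbm{T})\to H^{3}(G, C(X,\mathbbm{T}))$ induced by the constant inclusion $\mathbbm{T}\hookrightarrow C(X,\mathbbm{T})$ is injective.

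Consider first $B=C(X)$. Since $X$ is compact, $M(C(X))=C(X)$, its unitaries form $C(X,\mathbbm{T})$, there are no nontrivial inner automorphisms, and $\text{Aut}(C(X))\cong\text{Homeo}(X)$. Given an $\omega$-anomalous action $(\alpha_{g},m_{g,h})$, condition (1) together with commutativity of $C(X)$ forces $\alpha_{g}\circ\alpha_{h}=\alpha_{gh}$, so $g\mapsto\alpha_{g}$ is an honest action of $G$ on $C(X)$, i.e.\ an action of $G$ on $X$; this is the $G$-module structure on $C(X,\mathbbm{T})$. Rearranging condition (2) gives exactly $dm=\omega$ in the inhomogeneous cochain complex $C^{\bullet}(G,C(X,\mathbbm{T}))$, so the image of $[\omega]$ in $H^{3}(G,C(X,\mathbbm{T}))$ vanishes. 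For the injectivity I use the exponential sequence: since $X$ is connected, locally path connected, and $\mathrm{Hom}(\pi_{1}X,\mathbbm{Z})\cong H^{1}(X,\mathbbm{Z})=0$, the covering-space lifting criterion shows every continuous $X\to\mathbbm{T}$ lifts through $\exp\colon\mathbbm{R}\to\mathbbm{T}$, so there is a short exact sequence of $G$-modules $0\to\mathbbm{Z}\to C(X,\mathbbm{R})\to C(X,\mathbbm{T})\to 0$ with $\mathbbm{Z}$ the constants (trivial action). As $C(X,\mathbbm{R})$ is a $\mathbbm{Q}$-vector space and $G$ is finite, $H^{n}(G,C(X,\mathbbm{R}))=0$ for $n\ge 1$, so the connecting map is an isomorphism $H^{3}(G,C(X,\mathbbm{T}))\xrightarrow{\sim}H^{4}(G,\mathbbm{Z})$; comparing along the inclusion of constants with the analogous sequence $0\to\mathbbm{Z}\to\mathbbm{R}\to\mathbbm{T}\to 0$ identifies the map in question with the identity of $H^{4}(G,\mathbbm{Z})$, so it is injective and hence $[\omega]=0$.

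For $B=C(X)\otimes\mathcal{K}$ the same two ingredients work once the noncommutativity is handled. Again $Z(M(B))=C(X)$, so the relevant module is still $C(X,\mathbbm{T})$; but condition (1) now only gives $[\alpha_{g}][\alpha_{h}]=[\alpha_{gh}]$ in $\text{Out}(B)$, i.e.\ a $G$-kernel $\rho\colon G\to\text{Out}(B)$, which on the spectrum induces an honest $G$-action $\phi$ on $X$. Here I would invoke the Dixmier--Douady description of automorphisms: an automorphism of $C(X)\otimes\mathcal{K}$ fixing the spectrum pointwise is exactly a $C(X)$-linear one, i.e.\ a continuous section of the trivial $PU(\mathcal{H})$-bundle over $X$, and the inner ones are precisely those lifting to sections of the $U(\mathcal{H})$-bundle; since $U(\mathcal{H})$ is contractible (Kuiper), the kernel of $\text{Out}(B)\to\text{Homeo}(X)$ is $[X,PU(\mathcal{H})]=[X,K(\mathbbm{Z},2)]$, the group of complex line bundles on $X$. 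Thus ``$X$ has no nontrivial complex line bundles'' is precisely the statement that $\text{Out}(B)\to\text{Homeo}(X)$ is injective. Consequently $\rho$ is determined by $\phi$, and the geometric action $\beta_{g}=(f\mapsto f\circ\phi_{g}^{-1})\otimes\id_{\mathcal{K}}$ on $B$ is an honest lift of $\rho$; hence $\alpha_{g}=\text{Ad}(w_{g})\circ\beta_{g}$ for some $w_{g}\in U(M(B))$. Substituting this into conditions (1)--(2) and collecting all central unitaries produces a $2$-cochain $c\colon G^{2}\to C(X,\mathbbm{T})$ with $dc=\omega$, so again the image of $[\omega]$ in $H^{3}(G,C(X,\mathbbm{T}))$ vanishes, and the exponential-sequence argument (using $H^{1}(X,\mathbbm{Z})=0$) gives $[\omega]=0$.

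The main obstacle is the stable case: one must correctly identify $\text{Out}(C(X)\otimes\mathcal{K})$ and, above all, exhibit an honest lift of the $G$-kernel $\rho$. The ``no line bundles'' hypothesis is exactly what collapses the kernel of $\text{Out}(C(X)\otimes\mathcal{K})\to\text{Homeo}(X)$ so that the evident geometric lift exists; without it, $\rho$ could genuinely fail to lift. The remaining steps --- the two cochain computations and the exponential-sequence argument --- are routine, although one should note that ``connected'' and ``locally path connected'' are used precisely where exactness of $0\to\mathbbm{Z}\to C(X,\mathbbm{R})\to C(X,\mathbbm{T})\to 0$ requires them.
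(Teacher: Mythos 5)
Your proposal is correct and follows essentially the same route as the paper: reduce to showing that the change-of-coefficients map $H^{3}(G,\text{U}(1))\to H^{3}(G,C(X,\text{U}(1)))$ is injective via the exponential sequence (using $H^{1}(X,\mathbbm{Z})=0$, connectedness, local path connectedness, and vanishing of cohomology with $\mathbbm{R}$-vector-space coefficients for finite $G$), and handle the stable case by using the triviality of $\text{Pic}(X)$ to collapse $\text{Out}(C(X)\otimes\mathcal{K})$ onto $\text{Homeo}(X)$. The only cosmetic difference is that the paper packages the stable reduction as a monoidal equivalence of 2-groups $\underline{\text{Aut}}(C(X))\simeq\underline{\text{Aut}}(C(X)\otimes\mathcal{K})$ (citing the known computation of $\text{Out}(C(X)\otimes\mathcal{K})$), whereas you re-derive the lift of the $G$-kernel and carry out the central-cochain computation explicitly.
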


In Section \ref{anomaliesintopology} we construct several examples of anomalous finite group actions on commutative C*-algebras with compact connected spectrum, which fail to satisfy the hypotheses of the above theorem (See Sections \ref{ActionsonS} and \ref{actionon2torus}). We then apply the constructions from Section \ref{buildingactions} to obtain the following theorem:

\begin{thm}\label{actionsonmanifolds}
For every finite group $G$, every $\omega\in Z^{3}(G, \text{U}(1))$, and every $n\ge 2$, there exists a closed connected n-manifold $M$ and an $\omega$-anomalous action of $G$ on $C(M)\otimes \mathcal{K}$. For $n\ge 4$, $M$ can be chosen so that $H^{1}(M, \mathbbm{Z})=0$. 
\end{thm}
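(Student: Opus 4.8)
The plan is to deduce everything from the crossed‑product construction of Theorem~\ref{constructingtheaction}: we feed it an ordinary action of a discrete group $\Gamma$ on a commutative C*-algebra together with cohomological data, arranged so that the output has anomaly $[\omega]$ on an algebra of the form $C_{0}(\widetilde{M})\rtimes\Gamma$, and so that this algebra is isomorphic to $C(M)\otimes\mathcal{K}$ for a closed connected $n$-manifold $M$. The geometric model will always be: choose a closed connected $n$-manifold $M$ with infinite fundamental group $\Gamma=\pi_{1}(M)$ and let $\Gamma$ act by deck transformations on the universal cover $\widetilde{M}$; this action is free, proper and cocompact, so $C_{0}(\widetilde{M})\rtimes\Gamma$ has spectrum $M$, and since $\Gamma$ is infinite its fibres are copies of $\mathcal{K}$, whence $C_{0}(\widetilde{M})\rtimes\Gamma\cong C(M)\otimes\mathcal{K}$. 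Since replacing the $3$-cocycle produced by the construction by a cohomologous one only amounts to rescaling the transition unitaries $m_{g,h}$ of Definition~\ref{Explicitdefn} by scalars in $\mathrm{U}(1)$ (this changes $\omega$ by the coboundary of the rescaling $2$-cochain in condition (2)), it suffices to realize the class $[\omega]$.

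The cohomological core is to show that every $[\omega]\in H^{3}(G,\mathrm{U}(1))\cong H^{4}(G,\mathbbm{Z})$ occurs as the anomaly of the construction for suitable input. This is where we invoke the Eilenberg--MacLane method: using a $\mathbbm{Z}[G]$-resolution one lowers cohomological degree and rewrites $[\omega]$ in terms of a $2$-dimensional cocycle with coefficients in a $G$-module $N$ paired against a character of $N$, and since $H^{3}(G,\mathrm{U}(1))$ is finite one may take $N$ finite. Transporting this data through Theorem~\ref{constructingtheaction} (via a surjection $\Gamma\twoheadrightarrow N\rtimes G$ from the deck group) then yields the target anomaly, provided $\Gamma$ can be taken to admit such a surjection and to be the fundamental group of a closed connected $n$-manifold; the point is that $\Gamma$ may be enlarged at will. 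For $n\ge 4$ this is unconstrained: every finitely presented group is the fundamental group of a closed connected $n$-manifold obtained by surgery on embedded circles in a connected sum of copies of $S^{1}\times S^{n-1}$, and by further taking a direct product with a fixed infinite finitely presented acyclic group (killing the free part of the abelianization) one arranges $H_{1}(M,\mathbbm{Z})$ to be a nonzero finite group, hence $H^{1}(M,\mathbbm{Z})=\operatorname{Hom}(\Gamma,\mathbbm{Z})=0$; this is consistent with Theorem~\ref{noactionsonmanifolds} because the torsion of $H_{1}(M,\mathbbm{Z})$ persists in $H^{2}(M,\mathbbm{Z})$, so $M$ still carries nontrivial complex line bundles, which are exactly what carries the anomaly in this regime. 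For $n=3$ we take $M=\Sigma\times S^{1}$ with $\Sigma$ the surface produced in the case $n=2$ and extend the anomalous action by the identity on the $C(S^{1})$-factor (this preserves both conditions of Definition~\ref{Explicitdefn}). For $n=2$ the fundamental group must be a surface group, so we carry out the Eilenberg--MacLane realization with $\Gamma$ of this form, choosing a closed surface $\Sigma$ of sufficiently large genus with a free $G$-action realizing the required outer action on $\pi_{1}(\Sigma)$ (obtained from a surjection of a surface group onto a suitable group and passing to the associated cover) and running the construction on $C_{0}(\widetilde{\Sigma})\rtimes\pi_{1}(\Sigma)\cong C(\Sigma)\otimes\mathcal{K}$.

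Assembling these pieces, Theorem~\ref{constructingtheaction} produces an $\omega'$-anomalous action of $G$ on $C(M)\otimes\mathcal{K}$ with $[\omega']=[\omega]$, and the scalar rescaling above replaces $\omega'$ by the prescribed cocycle $\omega$. I expect the main obstacle to be the case $n=2$ (and secondarily $n=3$): there one must perform the Eilenberg--MacLane realization of an arbitrary anomaly while keeping the auxiliary group inside the rigid class of surface groups and simultaneously controlling the $G$-action, which is considerably more delicate than the $n\ge 4$ case, where surgery theory gives complete freedom in choosing $\Gamma$. The other point requiring care is the $H^{1}(M,\mathbbm{Z})=0$ refinement for $n\ge 4$: one must verify that, after killing $H^{1}$, the construction still delivers the full group $H^{3}(G,\mathrm{U}(1))$ with the anomaly now carried through the line-bundle ($H^{2}$) mechanism rather than the $H^{1}$ one --- which is precisely why this refinement is stated only for $n\ge 4$ and fails for closed surfaces, where $H^{1}=0$ forces $\Sigma=S^{2}$.
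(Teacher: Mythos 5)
Your proposal diverges from the paper's route in a way that creates real gaps. The paper does \emph{all} of the cohomology with finite groups: Lemma \ref{trivialize3cocycle} produces a \emph{finite} group $G'=A\rtimes_{\alpha}G$ with abelian kernel $A$, a surjection $\rho:G'\to G$, and a $2$-cochain $c$ with $dc=\rho^{*}(\omega)$ and $c|_{A}=1$; the only geometric input then needed is a free action of the finite group $G'$ on a closed connected $n$-manifold $X$ (a covering of a genus-$g$ surface for $n=2$, products or connected sums of $S^{1}\times S^{n-1}$ for $n\ge 3$, and a simply connected $X$ via surgery for $n\ge 4$), after which Theorem \ref{constructingtheaction} acts on $C(X)\rtimes A\sim_{\text{Morita}}C(X/A)$ and $M=X/A$. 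Your model instead takes the kernel of the surjection to be an \emph{infinite} group ($\pi_{1}$ of a cover) acting on a universal cover, and this is where the argument breaks: you never establish the analogue of Lemma \ref{trivialize3cocycle}, namely that $\rho^{*}(\omega)$ is trivializable on the deck group \emph{and} that the trivialization restricts to a coboundary on the kernel. Neither is automatic for the groups you want to use. For $n\ge 3$ the deck group of the quotient manifold can have nonzero $H^{3}(-,\mathrm{U}(1))$, so $\rho^{*}(\omega)$ need not be a coboundary at all; and for $n=2$ the kernel is a closed surface group $\pi_{1}(\Sigma)$ with $H^{2}(\pi_{1}(\Sigma),\mathrm{U}(1))\cong\mathrm{U}(1)\ne 0$, so the restricted twist need not vanish and the output is a \emph{twisted} crossed product, not obviously $C(\Sigma)\otimes\mathcal{K}$. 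You flag $n=2$ as ``the main obstacle'' but offer no argument; in the paper's setup $n=2$ is the \emph{easy} case, precisely because the group needing a free action on a surface is finite. There is also a bookkeeping inconsistency: Theorem \ref{constructingtheaction} crosses by the \emph{kernel} of $\rho$, so with a surjection $\Gamma\twoheadrightarrow N\rtimes G$ the output algebra is $C_{0}(\widetilde{M})\rtimes(\ker)$, whose spectrum is an intermediate cover, not $C_{0}(\widetilde{M})\rtimes\Gamma$ with spectrum $M$ as you write.

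The $H^{1}=0$ refinement is also not salvaged by your argument. Taking a direct product of $\pi_{1}$ with an acyclic group does not ``kill the free part of the abelianization'' (the abelianization of a product is the product of abelianizations, so any free summand survives), and in your model the fundamental group of the final manifold is an infinite finite-index subgroup whose abelianization you have no control over. The paper's mechanism is cleaner and you should adopt it: realize the finite group $G'$ of Lemma \ref{trivialize3cocycle} as $\pi_{1}$ of a closed $n$-manifold $Y$ by surgery ($n\ge 4$), let $X=\widetilde{Y}$ be the (compact, simply connected) universal cover with its free $G'$-action, and take $M=X/A$; then $\pi_{1}(M)=A$ is finite abelian, so $H_{1}(M,\mathbbm{Z})$ is finite and $H^{1}(M,\mathbbm{Z})=0$ by universal coefficients. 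Your opening observation that cohomologous cocycles may be interchanged by rescaling the $m_{g,h}$, and your $n=3$ step of tensoring with $\mathrm{id}_{C(S^{1})}$, are both fine; but as written the central cohomological step and the $n=2$ and $H^{1}=0$ cases are missing or incorrect.
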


A comparison of these theorems shows that the anomalies appearing in the latter theorem must be using the noncommutativity afforded by stabilization in a non-trivial way, since the first result shows they cannot arise from actions on the commutative C*-algebra $C(M)$ alone. In Corollary \ref{non-trivial-lifting}, we leverage this to show for every $n\ge 4$ there are infinitely many connected $n$-manifolds $M$ such that the lifting obstruction $[\omega_{M}]\in H^{3}(\text{Out}(C(M)\otimes \mathcal{K}), C(X, \text{U}(1)) )$ is non-trivial.

C*-algebras play an important role in coarse geometry \cite{MR2007488}. For a metric space $X$ with bounded geometry, the Roe C*-algebra $C^{*}(X)$ captures a significant amount of information about the large scale structure of $X$. Restricting to spaces with property A, these algebras are a complete invariant of the coarse equivalence class of the space \cite{MR3116573}. If $X$ has property $A$ then $\text{Out}(C^{*}(X))\cong \text{Coa}(X)$, the group of coarse auto-equivalences up to the relation of ``closeness" \cite{BV2020}. This suggests that anomalies for a finite group acting by coarse autoequivalences on a coarse space should have some sort of coarse-geometric interpretation. However, we have the following no-go theorem:

\begin{thm}\label{noactiononcoarse}
Let $X$ be a discrete metric space with bounded geometry. Then there are no anomalous actions of any group on the Roe algebra $C^{*}(X)$. 
\end{thm}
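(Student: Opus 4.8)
\emph{Proof proposal.} The content to prove is that every $\omega$-anomalous action of a group $G$ on $B=C^{*}(X)$ has trivial anomaly $[\omega]=0\in H^{3}(G,\text{U}(1))$; equivalently, for $\omega$ representing a non-trivial class there is no $\omega$-anomalous action. The mechanism, which is special to Roe algebras (and fails for $C(M)\otimes\mathcal{K}$), is that $C^{*}(X)$ always contains the compact operators of its ambient Hilbert space as a distinguished ideal, and this rigidifies its automorphisms so that no anomaly can survive.

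The first step is the structural input. Represent $C^{*}(X)$ on its standard Hilbert space $\mathcal{H}=\ell^{2}(X)\otimes\mathcal{H}_{0}$. The finite rank operators $\delta_{x}\delta_{y}^{*}\otimes\theta$ with $x,y\in X$ and $\theta\in\mathcal{K}(\mathcal{H}_{0})$ have propagation $d(x,y)<\infty$ and are locally compact, so $\mathcal{K}(\mathcal{H})\subseteq C^{*}(X)$. Moreover a short sandwiching argument shows that every nonzero closed two-sided ideal $I\subseteq C^{*}(X)$ contains $\mathcal{K}(\mathcal{H})$: from $0\neq a\in I$ one extracts some $\delta_{x}\delta_{y}^{*}\otimes\theta\in I$ with $\theta\neq 0$ by compressing $a$ with rank one diagonal pieces, and then, since all points of $X$ are at finite distance and $\mathcal{K}(\mathcal{H}_{0})$ is simple, one recovers every $\delta_{z}\delta_{w}^{*}\otimes\mathcal{K}(\mathcal{H}_{0})$, hence all of $\mathcal{K}(\mathcal{H})$. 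Thus $\mathcal{K}(\mathcal{H})$ is the unique minimal nonzero ideal of $C^{*}(X)$; in particular it is essential (so $M(C^{*}(X))$ embeds in $\mathcal{B}(\mathcal{H})$), invariant under every automorphism of $C^{*}(X)$, and $Z(M(C^{*}(X)))=\mathbb{C}1$ because a central multiplier commutes with $\mathcal{K}(\mathcal{H})$ (this last point also confirms that the coefficient group of the anomaly really is $\text{U}(1)$).

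Now take an $\omega$-anomalous action $(\alpha_{g},m_{g,h})$ as in Definition \ref{Explicitdefn}. Each $\alpha_{g}$ preserves the ideal $\mathcal{K}(\mathcal{H})$, so $\alpha_{g}|_{\mathcal{K}(\mathcal{H})}=\operatorname{Ad}(V_{g})$ for a unitary $V_{g}\in\mathcal{B}(\mathcal{H})$; since $\mathcal{K}(\mathcal{H})$ acts nondegenerately and is an essential ideal, $\alpha_{g}=\operatorname{Ad}(V_{g})$ on all of $C^{*}(X)$. Condition (1) of Definition \ref{Explicitdefn} says $\operatorname{Ad}(m_{g,h})\circ\alpha_{g}\circ\alpha_{h}=\alpha_{gh}$, i.e.\ $\operatorname{Ad}(m_{g,h}V_{g}V_{h})=\operatorname{Ad}(V_{gh})$ on $C^{*}(X)$; as $C^{*}(X)\supseteq\mathcal{K}(\mathcal{H})$ acts irreducibly on $\mathcal{H}$, the unitary $V_{gh}^{*}m_{g,h}V_{g}V_{h}$ is a scalar $\mu(g,h)\in\text{U}(1)$, so $m_{g,h}=\mu(g,h)\,V_{gh}V_{h}^{*}V_{g}^{*}$. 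Substituting this into condition (2), $\omega(g,h,k)m_{gh,k}m_{g,h}=m_{g,hk}\alpha_{g}(m_{h,k})$, and using $\alpha_{g}(m_{h,k})=V_{g}m_{h,k}V_{g}^{*}$, every $V$ telescopes away: both sides reduce to a scalar multiple of $V_{ghk}V_{k}^{*}V_{h}^{*}V_{g}^{*}$, and comparing scalars yields an explicit identity exhibiting $\omega$ as the coboundary of the $2$-cochain $\mu$ (up to the cocycle-sign convention). Hence $[\omega]=0$.

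The only genuinely Roe-specific ingredient is the identification of $\mathcal{K}(\mathcal{H})$ as an essential, automorphism-invariant ideal of $C^{*}(X)$; everything after that is soft. I expect the only routine care to be (i) verifying that a unitary implementing $\alpha_{g}$ on $\mathcal{K}(\mathcal{H})$ implements it on all of $C^{*}(X)$, which follows by evaluating on products $ak$ with $a\in C^{*}(X)$, $k\in\mathcal{K}(\mathcal{H})$ and using nondegeneracy, and (ii) keeping track of the cocycle convention in the final coboundary identity. (Bounded geometry is not actually used beyond uniform discreteness; the argument works for any uniformly discrete metric space, and $X$ finite is the degenerate case $C^{*}(X)=\mathcal{K}(\mathcal{H})$.)
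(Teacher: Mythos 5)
Your proposal is correct and rests on exactly the same mechanism as the paper: $\mathcal{K}(\ell^{2}(X,H))$ is the unique minimal (hence automorphism-invariant, essential) ideal of $C^{*}(X)$, and since all automorphisms of $\mathcal{K}$ are inner, the anomaly must trivialize. The paper packages this as a $\text{U}(1)$-linear monoidal restriction functor $\underline{\text{Aut}}(C^{*}(X))\rightarrow\underline{\text{Aut}}(\mathcal{K})\cong\text{2-Gr}(1,\text{U}(1),1)$ (citing the minimal-ideal and multiplier-algebra facts rather than reproving them), whereas you unpack the same argument explicitly via the implementing unitaries $V_{g}$ and the scalars $\mu(g,h)$ whose coboundary is $\omega$.
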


Recall, however, that the C*-algebra $C^{*}(X)$ contains a unique minimal ideal isomorphic to the algebra $\mathcal{K}$ of compact operators on a separable Hilbert space. The presence of this algebra as the unique minimal ideal is the primary obstruction to the existence of anomalous symmetries. The C*-algebra $C^{*}(X)/\mathcal{K}$ is called the \textit{Roe corona} \cite{BFV2018} due to its similarity to the corona of a non-unital C*-algebras. In Section \ref{anomaliesincoarsegeom} we will show that Roe coronas have plenty of anomalous symmetry. We have the following result:

\begin{thm}\label{actiononPropertyA}
For every finite group $G$ and $\omega\in Z^{3}(G, \text{U}(1))$, there exists a discrete metric space $X$ with bounded geometry and property $A$, and an $\omega$-anomalous action of $G$ on the Roe corona $C^{*}(X)/\mathcal{K}$.
\end{thm}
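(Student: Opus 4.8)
The plan is to apply the crossed‑product construction of Theorem~\ref{constructingtheaction} to carefully chosen coarse‑geometric input and then recognize the resulting C*-algebra as a Roe corona. Recall that Theorem~\ref{constructingtheaction} produces $\omega$-anomalous $G$-actions on crossed products: from a surjection $q\colon\Gamma\twoheadrightarrow G$ with kernel $N$, a $2$-cochain $\mu\in C^{2}(\Gamma,\mathrm{U}(1))$ with $d\mu=q^{*}\omega$, and an honest action of $\Gamma$ on a C*-algebra $A$, it builds an $\omega$-anomalous action of $G$ on the (possibly $\mu|_{N}$-twisted) reduced crossed product $A\rtimes_{r}N$. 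I would take $\Gamma$ to be a finitely generated free group admitting a surjection onto $G$; then $N=\ker q$ is again finitely generated free. Since free groups have cohomological dimension one, $H^{3}(\Gamma,\mathrm{U}(1))=0$, so a $\mu$ with $d\mu=q^{*}\omega$ exists; and since $H^{2}(N,\mathrm{U}(1))=0$ one can further modify $\mu$ so that $\mu|_{N}$ is trivial, making the crossed product honest.

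For the coefficient algebra take $A=\bigl(\ell^{\infty}(\Gamma)/c_{0}(\Gamma)\bigr)\otimes\mathcal{K}$, with $\Gamma$ acting by left translation on $\ell^{\infty}(\Gamma)/c_{0}(\Gamma)$ and trivially on $\mathcal{K}$. Restricting this action to $N\le\Gamma$, the set $\Gamma$ splits as a disjoint union of the $|G|$ right cosets of $N$, each isomorphic as an $N$-set to $N$ with its translation action, so
\[
A\rtimes_{r}N\;\cong\;\bigoplus_{N\backslash\Gamma}\Bigl(\bigl(\ell^{\infty}(N)/c_{0}(N)\otimes\mathcal{K}\bigr)\rtimes_{r}N\Bigr)\;\cong\;\bigoplus_{N\backslash\Gamma}\bigl(C^{*}(N)/\mathcal{K}\bigr),
\]
using the standard crossed‑product model $C^{*}(Y)\cong(\ell^{\infty}(Y)\otimes\mathcal{K})\rtimes_{r}Y$ of the Roe algebra of a group $Y$ (with minimal ideal $\mathcal{K}\cong(c_{0}(Y)\otimes\mathcal{K})\rtimes_{r}Y$), together with exactness of the free group $N$, which keeps the reduced crossed product exact. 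Here $G=\Gamma/N$ acts on the finite index set $N\backslash\Gamma\cong G$ by the regular action, hence permutes the summands transitively.

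To finish, view the $\omega$-anomalous $G$-action on $\bigoplus_{N\backslash\Gamma}(C^{*}(N)/\mathcal{K})$ as acting on the block-diagonal subalgebra of $M_{|G|}\bigl(C^{*}(N)/\mathcal{K}\bigr)$, and extend it to the full matrix algebra: the block-permuting part is $\mathrm{Ad}$ of the genuine permutation representation of $G$ on $\mathbb{C}^{|G|}$, so it contributes nothing to the anomaly, and the relations of Definition~\ref{Explicitdefn} survive the extension. Finally $M_{|G|}\bigl(C^{*}(N)\bigr)\cong C^{*}(N)$ (Roe algebras of bounded geometry spaces are defined with an ample module, hence absorb matrix amplifications), and this isomorphism carries $M_{|G|}(\mathcal{K})$, the unique minimal ideal, onto $\mathcal{K}$, so $M_{|G|}\bigl(C^{*}(N)/\mathcal{K}\bigr)\cong C^{*}(N)/\mathcal{K}=C^{*}(X)/\mathcal{K}$ for $X=N$. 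As $X=N$ is a finitely generated free group it has bounded geometry and property A; transporting the action along the isomorphism (a $*$-isomorphism extends to multiplier algebras, carrying the unitaries $m_{g,h}$ along and preserving the relations of Definition~\ref{Explicitdefn}) yields the desired $\omega$-anomalous action of $G$ on $C^{*}(X)/\mathcal{K}$.

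The step I expect to require the most care is the identification of the output of Theorem~\ref{constructingtheaction} with a genuine Roe corona. One must be sure the $N$-action on the chosen coefficient algebra is genuinely translation‑like — the naive choice $\ell^{\infty}(N)/c_{0}(N)$ with the action $\Gamma$ carries on a normal subgroup is of conjugation type and does \emph{not} give a Roe corona, which is why one passes to $\ell^{\infty}(\Gamma)/c_{0}(\Gamma)$ and accepts a sum of $|G|$ coronas — one must check any surviving cocycle twist can be removed (here automatically, by the free choice of $N$; in general by a Packer--Raeburn stabilization), and one must verify that the matrix‑amplification and block‑to‑full‑matrix extension steps preserve the anomaly. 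Everything else is routine. As expected, the action produced this way does not descend from any honest $G$-action on $C^{*}(X)$ itself, consistently with Theorem~\ref{noactiononcoarse}.
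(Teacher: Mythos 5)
Your overall architecture is reasonable and genuinely different from the paper's: the paper feeds a \emph{finite} group $\tilde G$ (from Lemma \ref{trivialize3cocycle}) acting coarsely discontinuously on a free group $\mathbb{F}_n$ by permuting generators into Theorem \ref{constructingtheaction} with $B=C^{*}(\mathbb{F}_n)/\mathcal{K}$ itself, and then identifies $(C^{*}(\mathbb{F}_n)/\mathcal{K})\rtimes K$ with the Roe corona of the quotient metric space $\mathbb{F}_n/K$ via \cite[Corollary 5.10]{MR4040015}. That citation is exactly what lets the paper avoid the step where your argument breaks.

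The gap is the extension from the block diagonal $\bigoplus_{G}D\subseteq M_{|G|}(D)$ to the full matrix algebra, and the justification you give for it (``the block-permuting part is $\mathrm{Ad}$ of the permutation representation, so it contributes nothing to the anomaly, and the relations survive'') is false as a general principle. Test it on $D=\mathbb{C}$, $G=\mathbb{Z}/2\mathbb{Z}$, $\omega$ nontrivial: the flip on $\mathbb{C}^{2}$ with $m_{g,g}=(1,-1)$ is a perfectly good $\omega$-anomalous action permuting the two summands regularly (it is the rank-$|G|$ module category over $\mathrm{Hilb}(G,\omega)$), yet it admits \emph{no} extension to $M_{2}(\mathbb{C})$, since every automorphism of $M_{2}(\mathbb{C})$ is inner and the Postnikov class of $\underline{\mathrm{Aut}}(M_{2}(\mathbb{C}))$ is trivial, so only the trivial anomaly is realizable there. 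Concretely, condition (1) of Definition \ref{Explicitdefn} forces the extended $\tilde m_{g,g}$ to intertwine $\tilde\alpha_{g}^{2}$ with $\mathrm{id}$, i.e.\ to be central in $M_{2}(\mathbb{C})$, and condition (2) then forces $\omega(g,g,g)=1$. So ``Ad of a permutation representation'' does not by itself guarantee that the relations survive passage from the diagonal to the full matrix algebra.

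Whether your construction can be repaired depends on structure you have not verified. In your specific setup the unitaries produced by Theorem \ref{constructingtheaction} are $m_{q,r}=\text{(scalar)}\cdot u^{*}_{\gamma(q,r)}$ with $\gamma(q,r)\in N$, and under the coset decomposition the canonical unitaries $u_{k}$ act by the \emph{same} left-translation operator in every block, so your $m_{q,r}$ are of the form $1_{|G|}\otimes v_{q,r}$ --- unlike the $(1,-1)$ in the counterexample. Likewise the block maps of $\alpha_{q}$ are all ``conjugation by $\hat q$'' but twisted block-by-block by right translations coming from the choice of coset representatives ($\hat q\gamma_{0}=n_{1}(\gamma_{0})\gamma_{1}$); these right translations are close to the identity, hence implemented by multiplier unitaries, so $\alpha_{q}$ differs from a genuine $(\text{permutation})\otimes(\text{fixed automorphism})$ only by $\mathrm{Ad}$ of a block-diagonal multiplier unitary. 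With that bookkeeping one can plausibly write down $\tilde\alpha_{q}$ and $\tilde m_{q,r}$ on $M_{|G|}(D)$ and check Definition \ref{Explicitdefn} directly, but that verification is precisely the content of the step you flagged as delicate and then waved through, so as written the proof is incomplete. Alternatively, replacing this whole reassembly by the paper's quotient-space identification $(C^{*}(X)/\mathcal{K})\rtimes G\cong C^{*}(X/G)/\mathcal{K}$ for coarsely discontinuous actions closes the gap cleanly.
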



The outline of the paper is as follows. Section \ref{background} contains some background on group cohomology as well as the definition of 2-group and anomalous action. Section \ref{buildingactions} contains the main construction of anomalous actions we use throughout the paper. Section \ref{anomaliesintopology} focuses on examples from topology, while Section \ref{anomaliesincoarsegeom} discusses anomalies in coarse geometry.

\bigskip

\textbf{Acknowledgements.} This paper is inspired by the work of Vaughan Jones \cite{MR548118, Jo1980}. We dedicate it to his memory. We would like to thank Samuel Evington, Sergio Giron Pacheco, Terry Gannon, Roberto Hernandez Palomares, Mark Pengitore, David Penneys, David Reutter, Yuan-Ming Lu, and Stuart White for many comments and discussions that have been useful for this paper. Many thanks to Rufus Willett for discussions on the K-theory of Roe C*-algebras, and to Andr\'e Henriques for pointing out the $n$ in Theorem \ref{actionsonmanifolds} can be dropped from $3$ to $2$. This research was supported by DMS NSF Grant 2100531/1901082.

\section{Anomalous actions: background and definitions.}\label{background}

We assume the reader is familiar with the basics of categories and C*-algebras. In this section, we recall some background material on cohomology and 2-groups. We also introduce definitions and discuss physical interpretations concerning anomalous actions.

\subsection{Group cohomology.}\label{groupcohomology}

Group cohomology plays an important role in higher category theory. In this paper, we will only make use of the lowest rungs of the higher categorical ladder, and thus will only need the lowest cohomology groups (second and third). Nevertheless, we will include general definitions, primarily to set conventions. In this paper, abelian group operations will be denoted additively with $+$ with one very notable exception: the group $\text{U}(1)$ of unitary complex numbers with multiplication (and any of its natural subgroups) will be written multiplicatively. For groups of functions valued in these groups, we will also use multiplicative notation.

Let $G$ be a group, and let $M$ be a $G$-module. Define 

$$C^{k}(G,M):= \{f:G^{\times k}\rightarrow M\ |\ f(g_{1},\dots, g_{k})=0\ \text{if any}\ g_{i}=1\}.$$

\noindent The elements of $C^{k}(G,M)$ are called \textit{normalized} cochains. Define the differential\footnote{Note that this definition of $d$ may differ by a sign from other conventions.} $d^{k}:C^{k}(G,M)\rightarrow C^{k+1}(G, M)$ by 

$$d^{k}f(g_{1}, \dots, g_{k+1})=$$
$$-g_{1}(f(g_{2},\dots, g_{k+1})+ \sum^{k}_{i=1}(-1)^{i+1}f(g_{1}, \dots, g_{i}g_{i+1}, \dots, g_{k+1})+(-1)^{k}f(g_{1},\dots, g_{k}).$$

We note that $d^{k+1}d^{k}=0$. We often drop mention of the superscript, assuming it is understood from context. Then set $Z^{k}(G, M):=\text{Ker}(d^{k})$, $B^{k}:=\text{Im}(d^{k-1})$ and $H^{k}(G,M):=Z^{k}(G,M)/B^{k}(G,M)$. The $Z^{k}$ are called \textit{normalized cocycles}, $B^{k}$ are called \textit{normalized coboundaries}, and $H^{k}(G, M)$ is called the $k^{th}$ cohomology group of $G$ with coefficients in $M$. The un-normalized versions of the above are defined almost precisely the same way, except the definition of the group of cochains removes the condition that $f(g_{1},\dots, g_{k})=0$ if $g_{i}=1$. The inclusion of the normalized chain complex into the un-normalized version induces an isomorphism on cohomology groups. The equivalence between these approaches is detailed in \cite[Section 6.5]{MR1269324}. The normalized version of group cohomology is certainly more convenient for applications ``in nature", e.g. in group extensions and higher category theory.
When we say cocycle or coboundary in the paper, we will always mean a normalized cocycle or coboundary.

Group cohomology will play an essential role in our story, especially cohomology for finite groups. Standard references for group cohomology include \cite{MR1269324, MR672956}. Given a homomorphism of $G$ -modules $\rho: M\rightarrow N$, we have the pushforward as a (degree 0) map of chain complexes 

$$\rho_{*}: C^{*}(G, M)\rightarrow C^{*}(G, N),$$

\noindent defined for $f\in C^{k}(G, M)$ by

$$\rho_{*}(f)(g_{1},\dots g_{k}):=\rho(f(g_{1},\dots, g_{k})).$$

Similarly, given a homomorphism $\rho: G\rightarrow H$, then any $G$ module $M$ is endowed with the structure of an $H$ module via $h\cdot m:=\rho(h)\cdot m$. The $\rho$ induces a homomorphism of chain complexes called the pullback

$$\rho^{*}: C^{*}(H, M)\rightarrow C^{*}(G, M),$$

$$\rho^{*}(f)(h_{1}, \cdots h_{k}):=f(\rho(h_{1}),\dots, \rho(h_{k})).$$

\noindent The pushforward and pullback are morphisms of chain complexes, hence induce homomorphisms at the level of cocycles, coboundaries, and cohomology.

Given a short exact sequence of groups

\[\begin{tikzcd}
    0 \arrow{r} & L \arrow{r} & M \arrow{r} & N \arrow{r} & 0
 \end{tikzcd},\]

\noindent we have the \textit{long exact sequence in cohomology}

\[\begin{tikzcd}
    \cdots H^{k-1}(G, N) \arrow{r} & H^{k}(G, L) \arrow{r} & H^{k}(G, M) \arrow{r} & H^{k}(G, N) \arrow{r} & H^{k+1}(G, L)\cdots
 \end{tikzcd}\]
 
 \noindent where the arrows between cohomology groups of the same degree are pushforwards. The degree shifting maps $H^{k-1}(G, N)\rightarrow H^{k}(G, L)$ are called \textit{connecting homomorphisms}. We provide an explicit model for these in the proof of Lemma \ref{trivialize3cocycle}. See \cite[Proposition 6.1]{MR672956} for details.

\subsection{2-groups and anomalous actions.}\label{2GrandAnom}

Recall that a monoidal category is a category $\cC$ together with a bifunctor $\cC\times \cC\rightarrow \cC$ (denoted $a\times b\mapsto a\otimes b$), associator isomorphisms $\alpha_{a,b,c}: (a\otimes b)\otimes c\rightarrow a\otimes(b\otimes c)$ satisfying the \textit{pentagon equations}, a unit object $\mathbbm{1}\in \cC$, and unitor isomorphisms $l_{a}: a\otimes \mathbbm{1}\rightarrow a$, $r_{a}: \mathbbm{1}\otimes a \rightarrow a$ satisfying the \textit{triangle equations}. For definitions and details, we refer the reader to Chapter $2$ of the textbook \cite{EtGeNiOs2015}.
Our monoidal categories will typically be \textit{strictly unital}, which means $\mathbbm{1}\otimes a=a=a\otimes \mathbbm{1}$ for all $a\in \cC$ and $l_{a}=1_{a}=r_{a}$

We make extensive use of monoidal functors between monoidal categories, so we include the full definition here for reference [EGNO 2.4.1].

\begin{defn}\label{monfunctor} A monoidal functor between monoidal categories is functor $F:\mathcal{C}\rightarrow \mathcal{D}$ such that $F(\mathbbm{1}_{\cC})\cong 1_{\mathbbm{D}}$, together with isomorphisms $m_{a,b}: F(a)\otimes F(b)\rightarrow F(a\otimes b)$ satisfying

$$F(\alpha^{\mathcal{C}}_{a,b,c})\circ m_{a\otimes b, c}\circ (m_{a,b}\otimes id_{F(c)})= m_{a,b\otimes c}\circ(id_{F(a)}\otimes m_{b,c})\circ\alpha^{\mathcal{D}}_{F(a), F(b), F(c)}. $$

\end{defn}

\noindent An 
\textit{equivalence} between monoidal categories is a monoidal functor which is an equivalence as a functor between categories (i.e. is fully faithful and essentially surjective). This implies there exists a (weak) inverse monoidal functor (see \cite[Section 2.4]{EtGeNiOs2015}).

Recall that a weak inverse of an object $X$ in a monoidal category is an object $X^{-1}$ such that $X\otimes X^{-1}\cong \mathbbm{1}$ and $X^{-1}\otimes X\cong \mathbbm{1}$.

\begin{defn}
A 2-group (also called a Gr-category or categorical group) is a monoidal category such that every object has a weak inverse, and every morphism is an isomorphism.
\end{defn}

A 2-group $\mathcal{G}$ can be described up to equivalence algebraically in terms of its \textit{Postnikov invariants} (see \cite{MR2664619},\cite{EtGeNiOs2015}):

\begin{itemize}
\item 
$\pi_{1}(\mathcal{G})$ is the group of isomorphism class of objects.
\item
$\pi_{2}(\mathcal{G})$ is the group of automorphisms of the tensor unit.
\item
An action of $\pi_{1}(\mathcal{G})$ on $\pi_{2}(\mathcal{G})$.
\item
A (normalized) 3-cocycle $\omega\in Z^{3}(\pi_{1}(\mathcal{G}),\pi_{2}(\mathcal{G})) $, capturing the associator data of the monoidal category.
\end{itemize}

To extract a 3-cocycle from $\mathcal{G}$, for each $g\in \pi_{1}(\mathcal{G})$ pick a representative $X_{g}$. Then for each pair $g,h\in \pi_{1}(\mathcal{G})$ pick an isomorphism $m_{g,h}\in \mathcal{G}(X_{gh},X_{g}\otimes X_{h})$\footnote{For a category $\mathcal{C}$, we use the notation $\mathcal{C}(a,b)$ to denote the set of morphisms from $a$ to $b$}. Then we have

$$(m^{-1}_{g,hk}\circ (1_{g}\otimes m^{-1}_{h,k}))\circ \alpha_{X_{g}, X_{h}, X_{k}}\circ \left( (m_{g,h}\otimes 1_{X_k})\circ m_{gh,k}\right)=\omega(g,h,k)1_{ghk},$$
for some $\omega(g,h,k)\in \pi_{2}(\mathcal{G})$. The pentagon axiom for monoidal categories is equivalent to $\omega\in Z^{3}(\pi_{1}(\mathcal{G}),\pi_{2}(\mathcal{G})) $.
The extraction of the 3-cocycle $\omega$ required arbitrary choices, but any other choices differ by a coboundary, hence the class $[\omega]\in H^{3}(\pi_{1}(\mathcal{G}),\pi_{2}(\mathcal{G})))$ is well defined. Furthermore, if $\mathcal{G}$ is strictly unital and we choose the strict unit $\mathbbm{1}$ to represent $[\mathbbm{1}]$, then choosing $m_{\mathbbm{1}, a}=1_{a}=m_{a,\mathbbm{1}}$, $\omega$ will be a normalized $3$-cocycle\footnote{Every monoidal category is equivalent to a strictly unital one, so we can always make choices to obtain a normalized cocycle}.

Conversely given a triple $(G, A, \omega)$, where $G$ is a group, $A$ is a $G$-module, and $\omega\in Z^{3}(G, A)$ is a (normalized) 3-cocyle, we can build a (strictly unital) 2-group denoted $\text{2-Gr}(G,A,\omega)$ as follows:

\begin{itemize}
\item 
The objects are given by the set $G$, and for $g\ne h, Mor(g,h)=\varnothing$ and $\text{Hom}(g,g)=A$, where we identify the identity in the category $1_{g}$ as the unit in $A$. Thus we write a morphism $a\in \text{Hom}(g,g)$ as $a1_{g}$ 
 \item
Composition of morphisms is the group operation in $A$.
\item
$g\otimes h:=gh$, the group element $e$ is the monoidal unit.

\item
 $(a 1_{g})\otimes (b 1_{h}):=(a g(b)) 1_{gh}$.
\item
The associator in the category $\alpha_{g,h,k}: (g\otimes h)\otimes k=ghk\rightarrow g\otimes (h\otimes k)=ghk$ is given by $\omega(g,h,k) 1_{ghk} $. The unitor is the identity.
     
 \end{itemize} 
 
\noindent Starting with the Postnikov data for a 2-group $\mathcal{G}$ we obtain a natural monoidal equivalence\footnote{we consider monoidal equivalence the natural notion of equivalence of 2-groups} $\mathcal{G}\cong \text{2-Gr}(\pi_{1}(\mathcal{G}),\pi_{2}(\mathcal{G}), \omega)$. The 2-group $\text{2-Gr}(\pi_{1}(\mathcal{G}),\pi_{2}(\mathcal{G}), \omega)$ is a \textit{skeletization} of $\mathcal{G}$.

Now, let $\mathcal{G}_{1}\cong \text{2-Gr}(G_{1}, A_{1}, \omega_{1})$ and $\mathcal{G}_{2}\cong \text{2-Gr}(G_{2}, A_{2}, \omega_{2})$ be Postnikov descriptions for a pair of 2-groups. From the definition of monoidal functor given above, we see a monoidal functor between them is completely described by the following data:

\begin{enumerate}
    \item 
    A homomorphism $\Lambda:G_{1}\rightarrow G_{2}$.
    \item
    A homomorphism $\lambda: A_{1}\rightarrow A_{2}$ such that $\lambda (g(a))= \Lambda(g)(\lambda(a))$ for all $g\in G_{1}, a\in A_{1}$.
    \item
    A co-chain $\mu\in C^{2}(G_1, A_{2})$ such that $d\mu \Lambda^{*}(\omega_{2})=\lambda_{*}(\omega_{1})$.
\end{enumerate}

\noindent where $\Lambda^{*}$ denotes the pullback and $\lambda_{*}$ denotes the pushforward respectively. This will be very useful for us.

There are two specific types of 2-groups that are relevant to this paper. The first are 2-groups of the form $\text{2-Gr}(G,\text{U}(1), \omega)$ where $G$ is a finite group and $\text{U}(1)$ is the group of unitary scalars with trivial $G$-action. These categories arise as the cores of unitary fusion categories, and indeed every pointed fusion category can be obtained uniquely as a linearization of a category of this type.

The other type of 2-group we wish to study is associated to C*-algebras. Let $B$ be a (not-necessarily unital) C*-algebra. Let $\text{Aut}(B)$ denote the group of $*$-autoequivalences of $B$. 

\medskip

We define the 2-group $\underline{\text{Aut}}(B)$ as follows:

\begin{itemize}
    \item 
    Objects are *-automorphisms of $B$, with monoidal product $\alpha\otimes \beta:=\alpha\circ \beta$ given by composition. The monoidal unit is the identity automorphism.
    \item
    A morphism from $\alpha$ to $\beta$ with $\alpha,\beta\in \text{Aut}(B)$ is a unitary element in the multiplier algebra $u\in M(B)$ with $u\alpha(a)=\beta(a)u$ for all $a\in B$.
    \item
    If $u_{i}\in \underline{\text{Aut}}(B)(\alpha_{i},\beta_{i})$ for $i=1,2$ the monoidal product $u_{1}\otimes u_{2}=u_{1}\alpha_{1}(u_{2})$, where the latter uses the unique extension of $\alpha_{1}$ to the multiplier algebra.
    \item
    The associators and unitors are all trivial (i.e. $1\in M(B)$).
    
\end{itemize}

While the above category is strict (trivial associator), there is a great deal of information hidden in the Postnikov invariants. We describe the Postnikov data as follows:

\begin{itemize}
\item
$\pi_{1}(\underline{\text{Aut}}(B))=\text{Out}(B)$, the group of outer automorphisms.
\item
$\pi_{2}(\underline{\text{Aut}}(B))=\text{UZM}(B)$, the group of unitaries in the center of the multiplier algebra of $B$.
\item
The action of $\text{Out}(B)$ on $\text{UZM}(B)$ is the usual action of outer automorphisms on the center.
\item
The 3-cohomology class $[\omega]\in H^{3}(\text{Out}(B), \text{UZM}(B))$ describing the associator is sometimes called a \textit{lifting obstruction}\footnote{From our perspective, this class is an obstruction from lifting a homomorphism $G\rightarrow \text{Out}(B)$ to a $\text{U}(1)$-linear monoidal functor $\text{2-Gr}(G,\text{U}(1),1)\rightarrow \text{\underline{Aut}}(B)$. See also Section \ref{physicalinterpretation}} in the operator algebra literature.
\end{itemize}

In general, the associator is highly non-trivial and contains subtle information about the ``higher'' or ``anomalous" symmetries of $B$. Note that we have a canonical embedding $\text{U}(1)\hookrightarrow \text{UZM}(B)$ for a C*-algebra $B$, and thus all the morphism spaces in the 2-group are canonically $\text{U}(1)$ modules. We have the following definition.

\begin{defn}\label{anomalousaction}
An anomalous action of a group $G$ on a C*-algebra $B$ consists of a 3-cocycle $\omega\in Z^{3}(G, \text{U}(1))$ and a $\text{U}(1)$-linear monoidal functor $\text{2-Gr}(G,\text{U}(1), \omega)\rightarrow \underline{\text{Aut}}(B)$. The coholomogy class $[\omega]\in H^{3}(G, \text{U}(1))$ is called the anomaly.
\end{defn}

An explicit unpacking is given in Definition \ref{Explicitdefn} in the introduction. Note that if we have an isomorphism of C*-algebras $\psi: A\rightarrow B$, then conjugation by $\psi$ gives a monoidal equivalence of $2$-groups $\underline{\text{Aut}}(A)\cong \underline{\text{Aut}}(B)$. If $\omega\in Z^{3}(G, \text{U}(1))$ is given, it will sometimes be convenient to use the terminology $\omega$-anomalous action.

Suppose that we have a homomorphism $\rho: G\rightarrow \pi_{1}(\underline{\text{Aut}}(B))=\text{Out}(B)$. Suppose in addition that the center of the multiplier algebra $\text{ZM}(B)\cong C(X)$ for some compact Hausdorff space $X$. Then pulling back the associator 3-cocycle $\omega_{B}$ gives $\rho^{*}(\omega_{B})\in Z^{3}(G, C(X))$. The question of whether $\rho$ extends to an $\omega$-anomalous action is then equivalent to asking if $i_{*}(\omega)$ is cohomologous to $\rho^{*}(\omega_{B})$ in $Z^{3}(G, C(X, \text{U}(1))$, where $i: \text{U}(1)\rightarrow C(X, \text{U}(1))$ embeds scalars as constant functions. 

This suggests more generally we should consider anomalies with these more general cohomology groups, but we first have to take the action on the center into account. The group $H^{3}(G, \text{U}(1))$ is universal in the sense that it always has a canonical homomorphism to the $H^{3}$ whose coefficients are the topological $G$-modules $C(X, \text{U}(1))$ of interest. This allows us to formulate the question ``does there exists an $\omega$-anomalous G-action on $B$" without having to first specify the spaces $X=\text{Spec}(\text{ZM}(M))$ and the $G$-action on this space, etc. The scalar anomaly also has a physical interpretation as a phase anomaly, discussed in Section \ref{physicalinterpretation}.

\begin{rem}\label{equivalence}
There are (at least) two natural notions of equivalence between $\omega$-anomalous $G$-actions on $B$. The finest is to take  monoidal functors $F_{1}, F_{2}:\text{2-Gr}(G, \text{U}(1), \omega)\rightarrow \underline{\text{Aut}}(B)$ up to monoidal natural isomorphism \cite[Definition 2.4.8]{EtGeNiOs2015}. In general this relation is too fine for classification purposes. We can consider instead the coarser equivalence relation of ``cocycle conjugacy", where we define $F_{1}$ as equivalent to $F_{2}$ if there exists $\alpha\in \text{Aut}(B)$ and a monoidal natural isomomorphism $F_{1}\cong F^{\alpha}_{2}$. Here $F^{\alpha}_{2}$ is defined via pointwise conjugation by $\alpha$, $F^{\alpha}_{2}(g):=\alpha\circ F_{2}(g)\circ \alpha^{-1}$, with structure morphisms given by $\alpha(m^{2}_{g,h})$. We do not study classification type statements in this paper, focusing instead on existence or non-existence questions, thus we do not elaborate these notions.
\end{rem}

\begin{rem}\label{relationwithfusion}
We briefly discuss the relationship between anomalous actions and pointed fusion category actions on a C*-algebra. A unitary fusion category is a finitely semi-simple rigid C*-tensor category \cite{LoRo1997, EtNiOs2005}. An action of a unitary fusion category $\mathcal{C}$ on a C*-algebra is a unitary tensor functor $\mathcal{C}\rightarrow \text{Corr}(B)$, where the latter denotes the C*-tensor category of correspondences \cite{MR3056650}. A fusion category is pointed if all simple objects are weakly invertible. These are always equivalent to fusion categories of the form $\text{Hilb}(G, \omega)$ where $G$ is a finite group $G$ and $\omega\in Z^{3}(G, \text{U}(1))$. 

Embedding $\text{\underline{Aut}}(B)\hookrightarrow \text{Corr}(B)$ as in \cite{MR3056650}, every $\omega$-anomalous action of $G$ on $B$ extends to an action of the fusion category $\text{Hilb}(G, \omega)$ by linearization. Note, however, that using the standard convention of \textit{right} correspondences, this embedding reverses the order of monoidal products (for a detailed explanation, see \cite[Proposition 5.6]{2105.05587}). Conversely a $\text{Hilb}(G, \omega)$ action on $B$ often restricts to an $\omega$-anomalous G-action on $B$, if the correspondences assigned to group elements $G$ are in the image of $\text{\underline{Aut}}(B)\hookrightarrow \text{Corr}(B)$. For finite $G$ this is automatic if, for example, $B$ is a $\rm{II}_{1}$ factor (and everything is normal). However this is not true in general and fails even for commutative C*-algebras (e.g. line bundles). Nevertheless, if $B$ is separable and unital, passing to the stabilization $B\otimes \mathcal{K}$, it is not hard to see that invertible correspondences of $B$ become auto-equivalences of the stabilization.

We also note that for $B$ a C*-algebra with approximate unit of projections, we can consider the unitary Cauchy completion of the projection category of $B$ (i.e. add formal unitary direct sums and formal orthogonal summands). We denote this $\mathcal{C}(B)$. For $B=C(X)$ or $C(X)\otimes \mathcal{K}$, this is the category of finite dimensional Hilbert bundles over $X$, and in general this suggests we should think of these categories as categories of noncommutative vector bundles. Given an $\omega$-anomalous action of $G$ on $B$, then $\mathcal{C}(B)$ acquires the structure of a module category over the unitary fusion category $\text{Hilb}(G, \omega)$. Picking a projection $p\in B$, this gives us a C*-algebra object \textit{internal to} $\text{Hilb}(G,\omega)$ \cite{MR3687214, MR3948170, MR2762528}.

Finally, given a finite $G$ with $\omega\in Z^{3}(G, \text{U}(1))$, the commutative algebra of functions $\text{Fun}(G, \mathbbm{C})$ can be equipped with a quasi-Hopf structure \cite[Example 5.13.6]{EtGeNiOs2015}. It is not hard to see that $\omega$-anomalous action of $G$ on $B$ can be reformulated in terms of co-actions of these quasi-Hopf algebras on $B$.
\end{rem}

\subsection{Gauging and lifting obstructions.}\label{physicalinterpretation}

We will briefly discuss a physical interpretation of anomalous actions, following \cite{MR903016}. Let $B$ be a C*-algebra whose (self-adjoint) operators describe the observables of a physical system. Then automorphisms of $B$ describe global symmetries of the system, while inner automorphisms are typically considered ``gauge" symmetries. It is natural to consider global symmetries up to gauge equivalence, which results in the symmetry group $\text{Out}(B)$. 

Suppose now that we have a homomorphism $\rho:G\rightarrow \text{Out}(B)$. This often called a $G$-kernel by operator algebraists. It is physically relevant to ask whether this symmetry group of the system can be realized covariantly on a Hilbert space, as a necessary step in the process of gauging the symmetry. Gauging the symmetry is a procedure for constructing a new algebra of observables where the G symmetries are now realized by gauge symmetries.

To make mathematical sense of this notion, pick a representative $\alpha_{g}\in \rho(g)$. A \textit{covariant representation} of this choice is a Hilbert space $H$, a non-degenerate representation of $B$ on $H$, and an assignment of unitaries $U_{g}\in U(H)$ for $g\in G$ satisfying:

\begin{enumerate}
    \item 
    The assignment $g\mapsto U_{g}$ is a homomorphism up to gauge equivalence, i.e. there exist $b_{g,h}\in B$ with $U_{g}U_{h}=b_{g,h}U_{gh}$.
    \item
    $U_{g}x=\alpha_{g}(x)U_{g}$ for all $x\in B$.
\end{enumerate}

If we can find such a representation, then associativity of the algebra of operators gives us an expansion of $U_{g}U_{h}U_{k}$ in two different ways:

$$(U_{g}U_{h})U_{k}=b_{g,h}U_{gh}U_{k}=b_{g,h}b_{gh,k}U_{ghk},$$

$$U_{g}(U_{h}U_{k})=U_{g}(b_{h,k}U_{hk})=\alpha_{g}(b_{h,k})b_{g,hk}U_{ghk}.$$

\noindent Taking the inverse of both sides of the equation and setting $m_{g,h}=b^{*}_{g,h}$, the above becomes

\begin{equation}\label{representeqn}
m_{gh,k}m_{g,h}=m_{g,hk}\alpha_{g}(m_{h,k}),\end{equation}
 
\noindent which is precisely the data of a $\text{U}(1)$-linear monoidal functor $\text{2-Gr}(G, \text{U}(1), 1)\rightarrow \underline{\text{Aut}}(B)$.

Conversely, given the above data, we can take the generalized crossed product, pick a state and apply the GNS construction to obtain unitaries acting on a Hilbert space as above.

Note, however, that in quantum mechanics states in a Hilbert space which differ by a phase are physically indistinguishable. This implies that the unitaries one uses in quantum mechanics are also physically indistinguishable if they differ by a phase. Therefore, it is physically reasonable to suppose that instead of equality in the above equation, we have that the two operators $m_{gh,k}m_{g,h}$ and $m_{g,hk}\alpha_{g}(m_{h,k})$ \textit{differ by a phase} i.e. $\omega(g,h,k)m_{gh,k}m_{g,h}=m_{g,hk}\alpha_{g}(m_{h,k})$ for $\omega(g,h,k)\in \text{U}(1)$. It is easy to show this phase gives an element in $Z^{3}(G, \text{U}(1))$. We call this a \textit{phase anomaly} of the symmetry. Furthermore, if we change each of the elements $m_{g,h}$ (and hence the $b_{g,h}$) by a phase, then we change the resulting $3$-cocycle by a coboundary, thus giving us a well-defined element in $H^{3}(G, \text{U}(1))$ which depends on our choices up to a phase. The main point is that while a phase anomaly may be an obstruction to gauging a symmetry, it does not make such symmetries physically irrelevant.

We note that in full generality, for arbitrary choices $b_{g,h}$ satisfying the conditions above, the two sides of equation \ref{representeqn} will differ by a unitary in the center of the multiplier algebra $M(B)$. Thus for observable algebras whose multipliers have non-trivial center, we will obtain, independently of all choices, an element in $H^{3}(G, \text{UZM}(B))$ which is called in the theory of operator algebras the \textit{lifting obstruction} associated to a homomorphism $\rho:G\rightarrow \text{Out}(B)$. This is certainly more natural from a purely mathematical point of view. Indeed this is precisely the pullback under $\rho$ of the Postnikov 3-class (which we previously called the universal lifting obstruction) in $H^{3}(\text{Out}(B), \text{UZM}(B))$. This is non-trivial if and only if there are no representations of unitaries on Hilbert space satisfying the conditions above.

We have the canonical homomorphism $i_{*}:H^{3}(G, \text{U}(1))\rightarrow H^{3}(G, \text{UZM}(B))$ allowing us to compare phase anomalies and lifting obstructions which in general is neither injective nor surjective. Thus phase anomalies and lifting obstructions are not precisely the same thing in the case when the multiplier algebra of $B$ has non-trivial center. If $M(B)$ has trivial center, however, then the lifting obstruction and the homomorphism $\rho:G\rightarrow \text{Out}(B)$ completely determine the phase anomaly of the symmetry. In addition to the quantum mechanical argument that we should only consider unitaries on a Hilbert space up to a phase, a practical reason to specialize to the case of phase anomalies, even in situations where $\text{ZM}(B)$ is non-trivial, is that more general anomalies depend in a detailed way on the action of $G$ on $\text{ZM}(B)$. This makes them difficult to study systematically due to the plethora of finite group actions on commutative C*-algebras.

\subsection{Anomalies force spontaneous symmetry breaking.}\label{symmetrybreaking}

We now discuss the relevance of phase anomalies in the physical context of \textit{spontaneous symmetry breaking}. Spontaneous symmetry breaking occurs when a symmetry of a system (as characterized by compatibility with a Hamiltonian or its associated dynamics), acts non-trivially on a set of states of interest (for example, ground states or equilibrium states a fixed inverse temperature). This is an important mechanism in condensed matter physics for characterizing many types of phase transitions. The idea we present here is that if a symmetry of a system has a non-trivial phase anomaly, this will force symmetry breaking of any \text{pure} state. 

We will keep the discussion fairly general so that it can be applied to many situations of interest. Let $B$ be a unital C*-algebra. Let $\phi$ be a state on $B$. Let $g\mapsto \alpha_{g}\in \text{Aut}(B)$, $m_{g,h}\in U(B)$ be an $\omega$-anomalous action of $G$ on $B$. Then we say this action \textit{preserves} $\phi$ if for all $g\in G$, $x\in B$

$$\phi(\alpha_{g}(x))=\phi(x).$$

As an example, if $B$ has a unique tracial state then all actions preserve this trace. In particular the anomalous actions of $\mathbbm{Z}/n\mathbbm{Z}$ on irrational rotation algebras we construct in Corollary \ref{Irrationalrotation} must preserve the unique tracial state. Thus in general an anomaly is not an obstruction to preserving a state.

However, in many situations of physical interest the states in question are \textit{pure states}, which is equivalent to saying that the corresponding GNS representations are irreducible. This occurs frequently in the study of ground states of quantum statistical mechanical models embodied by C*-dynamical systems \cite{MR1441540, MR1136257}. The extreme points in the convex set of ground states of a (strongly continuous) C*-dynamical system are pure \cite[Theorem 5.3.37]{MR1441540}, and thus for example if there is a unique ground state then it must be pure. Situations where the relevant ground states are pure occur commonly in the study of ground states for gapped Hamiltonians of quantum spin systems such as Kitaev's toric code model (see for example \cite{MR2345476}).

We will show that a pure state cannot be preserved under an anomalous $G$-action, and thus a non-trivial anomaly forces spontaneous symmetry breaking.

\begin{prop}

Let $\phi$ be a pure state on the unital C*-algebra $B$, and suppose that we have an $\omega$-anomalous action of $G$ on $B$. Then if $\phi$ is preserved under this action, $[\omega]\in H^{3}(G,\omega)$ is trivial.
\end{prop}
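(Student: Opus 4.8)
The plan is to use the pure state $\phi$ to build a covariant representation of the anomalous action on the GNS Hilbert space, and then observe that the irreducibility of that representation forces the anomaly to be a coboundary. First I would form the GNS triple $(H_\phi, \pi_\phi, \xi_\phi)$ associated to the pure state $\phi$, so that $\pi_\phi$ is irreducible. Since the action preserves $\phi$, each $\alpha_g$ satisfies $\phi \circ \alpha_g = \phi$, so by the uniqueness of the GNS construction there is for each $g$ a unitary $U_g \in U(H_\phi)$ with $U_g \pi_\phi(x) U_g^* = \pi_\phi(\alpha_g(x))$ for all $x \in B$ and $U_g \xi_\phi = \xi_\phi$ (the standard implementation of a state-preserving automorphism on its GNS space). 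The normalization $U_g \xi_\phi = \xi_\phi$ is what will pin down phases later, and I would also note $U_1 = 1$ by uniqueness.

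Next I would analyze how the $U_g$ compose. From condition (1) of Definition \ref{Explicitdefn}, the multiplier $m_{g,h} \in M(B)$ intertwines $\alpha_g \circ \alpha_h$ and $\alpha_{gh}$; applying $\pi_\phi$ (extended to $M(B)$ via the non-degenerate representation) we get that both $\pi_\phi(m_{g,h}) U_g U_h$ and $U_{gh}$ implement $\alpha_{gh}$ on the irreducible representation $\pi_\phi$. By Schur's lemma they differ by a scalar: there is $c(g,h) \in \mathrm{U}(1)$ with $U_g U_h = c(g,h)\, \pi_\phi(m_{g,h})^* U_{gh}$, or more conveniently written, $\pi_\phi(m_{g,h}) U_g U_h = \overline{c(g,h)}\, U_{gh}$. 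Then I would feed this into the associativity of $U_g U_h U_k$ on $H_\phi$, expanding it two ways exactly as in equation \eqref{representeqn}: one bracketing gives a factor $\overline{c(g,h)}\,\overline{c(gh,k)}$ times $\pi_\phi$ applied to a product of multipliers, the other gives $\overline{c(h,k)}\,\overline{c(g,hk)}$ times $\pi_\phi$ applied to the other product, and condition (2) of Definition \ref{Explicitdefn} relates those two products of multipliers by the scalar $\omega(g,h,k)$. Comparing, one obtains $\omega(g,h,k) = \dfrac{c(g,h)\, c(gh,k)}{c(h,k)\, c(g,hk)}$ (up to the sign convention for $d$ used in Section \ref{groupcohomology}), i.e. $\omega = d^2 c$ as a $\mathrm{U}(1)$-valued $2$-cochain, so $[\omega] = 0$ in $H^3(G,\mathrm{U}(1))$. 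A small check: using the normalization $U_g \xi_\phi = \xi_\phi$ and $m_{g,1} = m_{1,h} = 1$ one sees $c$ can be taken normalized, consistent with working in the normalized complex.

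The main obstacle I anticipate is bookkeeping rather than conceptual: making sure $\pi_\phi$ really extends to $M(B)$ compatibly (non-degeneracy of the GNS representation of a unital — or at least $\sigma$-unital — $B$ handles this, and the paper's standing assumptions include $B$ unital here), and, more delicately, tracking the sign/ordering conventions so that the scalar cocycle identity comes out as a genuine coboundary relation $d^2 c = \omega$ with the differential $d$ as defined in Section \ref{groupcohomology} (the paper warns its $d$ may differ by a sign from other sources, and the monoidal-functor data in Section \ref{2GrandAnom} shows a monoidal functor $\mathrm{2\text{-}Gr}(G,\mathrm{U}(1),\omega) \to \underline{\mathrm{Aut}}(B)$ that factors through scalars is precisely such a trivialization). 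Getting the order of multiplication in $\pi_\phi(m_{g,h}) U_g U_h$ versus $U_{gh}$ right — and which side the scalar lands on — is where I'd be most careful, but once the identity $\pi_\phi(m_{g,h})U_gU_h = \overline{c(g,h)}U_{gh}$ is fixed, the two expansions of $U_gU_hU_k$ together with Definition \ref{Explicitdefn}(2) mechanically produce the desired coboundary.
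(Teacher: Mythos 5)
Your argument is correct, but it takes a genuinely different route from the paper. You work directly on the GNS space: invariance of $\phi$ gives unitaries $U_g$ implementing $\alpha_g$ and fixing the cyclic vector, condition (1) of Definition \ref{Explicitdefn} shows $\pi_\phi(m_{g,h})U_gU_h$ and $U_{gh}$ both implement $\alpha_{gh}$, irreducibility (purity) plus Schur's lemma yields a scalar $c(g,h)$ relating them, and the two expansions of $U_gU_hU_k$ combined with condition (2) exhibit $\omega$ as $dc^{\pm 1}$ — which suffices for $[\omega]=1$ regardless of the sign convention you flag. The only points to nail down are routine: with $B$ unital one has $M(B)=B$ so $\pi_\phi(m_{g,h})$ needs no extension, and $\pi_\phi(m_{g,h})U_gU_h$ is unitary so the Schur scalar has modulus one. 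This is essentially the covariant-representation computation of Section \ref{physicalinterpretation} specialized to the GNS representation of an invariant pure state. The paper instead proves the proposition categorically: the anomalous action makes $\mathrm{Rep}(B)$ a module category over $\mathrm{Hilb}(G,\bar\omega)$, invariance of $\phi$ makes the subcategory generated by $H_\phi$ an invariant rank-one module category, and Ostrik's classification of finitely semisimple module categories forces $[\bar\omega]$ to be trivial. Your proof is more elementary and self-contained (no appeal to the correspondence embedding or to the module-category classification), while the paper's places the statement inside the fusion-category framework it uses elsewhere; both are valid, and it is worth noting that your Schur-lemma step is exactly the operator-algebraic content that Ostrik's theorem packages in the rank-one case.
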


\begin{proof}

By \cite[Proposition]{2105.05587}, our $\omega$-anomalous action of $G$ on $B$ yields a monoidal functor from the fusion category $\text{Hilb}(G,\bar{\omega})$ to $\text{Corr}(B)$, the monoidal category of (right) B-correspondences (here $\bar{\omega}$ is the pointwise complex conjugate cocycle). We can then equip $\text{Rep}(B)$ with the structure of a $\text{Hilb}(G,\bar{\omega})$ module category in the usual way, by tensoring a representation over $B$ with the appropriate correspondence.

Now, since $\phi$ is fixed under the $G$ action, it is easy to see that the GNS representation $H_{\phi}$ is invariant under the action of simple objects of $\text{Hilb}(G,\bar{\omega})$ up to isomorphism. In particular, if we take $\mathcal{C}$ to be the full subcategory of $\text{Rep}(B)$ whose objects are isomorphic to a finite direct sum of $H_{\phi}$'s, then $\mathcal{C}$ is invariant under the $\text{Hilb}(G,\bar{\omega})$ action. But since $\phi$ is pure, $H_{\phi}$ is irreducible so $\mathcal{C}$ is unitarily equivalent to the category of finite dimensional Hilbert spaces. Thus $\mathcal{C}$ is a rank one module category for $\text{Hilb}(G,\bar{\omega})$, which implies $[\bar{\omega}]=[\omega]^{-1}=[1]$ in $H^{3}(G,\text{U}(1))$ by the classification of finitely semi-simple module categories \cite[Theorem 3.1]{MR1976233}, and thus $[\omega]$ is trivial.
\end{proof}

\section{Anomalous actions on twisted crossed products.}\label{buildingactions}

In this section we describe a method for constructing anomalous actions on twisted crossed product C*-algebras. The underlying algebra we use in our construction can be traced back to the work of Eilenberg and Maclane \cite{MR19092, MR20996, MR33287}. These ideas were first applied to operator algebras by V. Jones \cite{MR548118, Jo1980}  and Sutherland \cite{Su1980}. Jones used this approach in the existence portion of his remarkable classification of finite group actions on the hyperfinite $\rm{II}_{1}$ factor. 

Our construction below follows Jones fairly closely, though this may not be obvious from a casual inspection. The primary difference in our constructions and proofs is that we avoid the use of Jones' groups $\Lambda(K,G)$ to build our actions, and instead directly use trivializations of cocycles to build the desired anomalous actions. Also, the part of Lemma \ref{trivialize3cocycle} below concerning trivialization on the kernel is crucial in our applications, and does not appear in \cite{Jo1980}. For the reader interested in a more in depth discussion of related ideas, we point out the following: \cite{MR1324403}, which extracts a 3-cocycle from symmetries of a twisted crossed product, essentially going in the ``other direction" from our construction; and \cite{MR1798596}, which provides an exposition of many concepts relevant to this section.

Without further ado, we record the following theorem, which is the main tool we have for constructing anomalous actions on C*-algebras. 

\medskip

\begin{thm}\label{constructingtheaction}
Suppose that we have the following data:

\begin{itemize}
\item
A group $Q$ and $[\omega]\in H^{3}(Q, \text{U}(1))$, with a normalized representative $\omega \in Z^{3}(Q, \text{U}(1))$.

\item
A group $G$ and a surjective homomorphism $\rho:G\rightarrow Q$ with kernel $K$.

\item
A normalized cochain $c\in C^{2}(G, \text{U}(1))$ such that $dc=\rho^{*}(\omega)$.

\item 
A homomorphism $\pi: G\rightarrow \text{Aut}(B)$, where $\text{Aut}(B)$ denotes the \textit{group} of *-automorphisms of the C*-algebra $B$, rather than the 2-group.

\end{itemize}

\noindent Then there exists an $\omega$-anomalous action of $Q$ on the twisted (reduced) crossed product $B\rtimes_{\pi, c} K $, where $c\in Z^{2}(K, \text{U}(1))$ is the restriction of $c$ to $K$.

\end{thm}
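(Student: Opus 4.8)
The plan is to build the $\omega$-anomalous action of $Q$ on $A := B\rtimes_{\pi,c}K$ using the explicit data in Definition \ref{Explicitdefn}, leveraging the fact that $c|_K\in Z^2(K,\text{U}(1))$ makes the twisted crossed product well-defined and that $dc=\rho^*(\omega)$ globally on $G$. First I would fix, for each $q\in Q$, a set-theoretic section $s:Q\to G$ of $\rho$ (normalized so $s(1)=1$), and write $\partial(q_1,q_2):= s(q_1)s(q_2)s(q_1q_2)^{-1}\in K$. The automorphism $\alpha_q$ of $A$ should be constructed as follows: the element $g=s(q)\in G$ acts on $B$ via $\pi_g$, and conjugation by $g$ permutes $K$ (since $K\trianglelefteq G$); combining these, $\pi_g$ together with the inner automorphism of $K$ by $g$ should induce a $*$-automorphism of $B\rtimes_{\pi,c}K$ provided one twists the canonical unitaries $u_k$ (for $k\in K$) by suitable scalars coming from $c$ to correct for the fact that $g$-conjugation need not preserve the $2$-cocycle $c|_K$ on the nose — but $c$ being defined on all of $G$ controls exactly this discrepancy. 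This is the standard ``generalized crossed product'' mechanism, essentially the Eilenberg–MacLane/Jones construction referenced in the section intro.

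Next I would define the multiplier unitaries $m_{q_1,q_2}\in M(A)$. The natural candidate is $m_{q_1,q_2} = (\text{scalar from } c) \cdot u_{\partial(q_1,q_2)}$, where $u_{\partial(q_1,q_2)}$ is the canonical unitary in $M(A)$ implementing the element $\partial(q_1,q_2)\in K$, and the scalar is assembled from values of $c$ on triples built out of the section $s$. Condition (1) of Definition \ref{Explicitdefn}, namely $m_{q_1,q_2}\alpha_{q_1}\alpha_{q_2}(x)=\alpha_{q_1q_2}(x)m_{q_1,q_2}$, should reduce to the identity $s(q_1)s(q_2)=\partial(q_1,q_2)s(q_1q_2)$ in $G$ together with the compatibility of the $c$-twists built into the $\alpha_q$: conjugating by $u_{\partial(q_1,q_2)}$ accounts precisely for the difference between composing the two inner-automorphism-of-$K$ pieces and the inner automorphism by $\partial(q_1,q_2)$. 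Condition (2), the $\omega$-pentagon $\omega(q_1,q_2,q_3)m_{q_1q_2,q_3}m_{q_1,q_2}=m_{q_1,q_2q_3}\alpha_{q_1}(m_{q_2,q_3})$, is where $dc=\rho^*(\omega)$ enters: expanding both sides, the $u$-part reduces to the associativity identity $\partial(q_1,q_2)\cdot{}^{s(q_1q_2)}\!(\ldots) = \ldots$ (the standard fact that $\partial$ is a $2$-cocycle valued in $K$ up to the section's failure of associativity), while the scalar part collects into $dc$ evaluated on $(s(q_1),s(q_2),s(q_3))$, which equals $\rho^*(\omega)(s(q_1),s(q_2),s(q_3))=\omega(q_1,q_2,q_3)$ since $\rho\circ s=\id_Q$. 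So the single equation $dc=\rho^*(\omega)$ is exactly the bookkeeping identity that makes (2) hold.

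I would also check the $\text{U}(1)$-linearity/normalization: since $\omega$ and $c$ are normalized and $s(1)=1$, one gets $\alpha_1=\id$ and $m_{1,q}=m_{q,1}=1$, so the functor is strictly unital as required by Definition \ref{anomalousaction}. The main obstacle I anticipate is not conceptual but organizational: pinning down the exact cochain formula for the scalar corrections in both $\alpha_q$ (twisting the $u_k$'s) and $m_{q_1,q_2}$ so that conditions (1) and (2) come out with no leftover error terms — this requires carefully tracking how $c$ restricted to various cosets and conjugates interacts, and verifying that $c|_K$ is indeed a $2$-cocycle on $K$ (immediate from $dc=\rho^*(\omega)$ and $\rho|_K=1$, since then $d(c|_K)=\rho^*(\omega)|_K=0$). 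A secondary technical point is ensuring everything descends correctly to the \emph{reduced} crossed product and that the $m_{q_1,q_2}$ genuinely lie in the multiplier algebra $M(A)$ rather than just acting formally; this follows because the $u_k$ for $k\in K$ are honest multipliers of $B\rtimes_{\pi,c}K$ and the $\alpha_q$ extend canonically to $M(A)$.
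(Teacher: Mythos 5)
Your proposal follows essentially the same route as the paper: a set-theoretic section $q\mapsto\hat q$ of $\rho$, automorphisms $\alpha_q$ built from $\pi(\hat q)$ together with conjugation on $K$ corrected by scalars from $c$ (the paper's formula is $\alpha_q(au_k)=c(\hat qk\hat q^{-1},\hat q)^{-1}c(\hat q,k)\,\pi(\hat q)(a)\,u_{\hat qk\hat q^{-1}}$), multipliers proportional to $u^{*}_{\gamma(q,r)}$ where your $\partial$ is the paper's $\gamma(q,r)=\hat q\hat r\,\widehat{qr}^{-1}$, and the identity $dc=\rho^{*}(\omega)$ emerging as exactly the anomaly in condition (2). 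The one caveat is that the ``organizational'' step you defer --- pinning down the precise $c$-scalars in $\alpha_q$ and $m_{q,r}$ and verifying multiplicativity, the $*$-property, the intertwining relation, and the $\omega$-twisted cocycle identity (plus the extension of $\alpha_q$ to the reduced completion, which the paper handles via invariance of the canonical conditional expectation) --- constitutes the entire body of the paper's proof, so your outline is a faithful skeleton rather than a finished argument.
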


The first goal of this section is to prove the above theorem. We do this in a series of lemmas. First we unpack some of the structure we are given. The equation $dc=\rho^{*}(\omega)$ unpacks to give us for all $f,g,h\in G$

\begin{equation}\label{cocycleeq}
c(f,g)c(fg,h)=\rho^{*}(\omega)(f,g,h)c(g,h)c(f,gh)=\omega(\rho(f),\rho(g),\rho(h))c(g,h)c(f,gh).
\end{equation}

\noindent Furthermore, since $\omega$ was assumed to be normalized, we have 

\begin{equation}\label{kereq}\rho^{*}(\omega)(f,g,h)=1\ \text{if any}\ f,g,h\in K.
\end{equation}

\noindent Thus the restriction of $c$ to $K$ is indeed a (normalized) 2-cocycle.

Consider the (reduced) twisted crossed product C*-algebra $B\rtimes_{\pi, c} K$ (for a detailed reference, see \cite{MR1798596, MR2288954}). The twisted crossed product has the dense subalgebra

$$(B\rtimes_{\pi, c} K)^{\circ}=C_{c}(K, B), $$

\noindent where, since $K$ is discrete, $C_{c}(K, B)$ denotes the set of finitely supported functions from $K$ to $B$. A convenient way to describe the product on this algebra is to consider unitaries $u_{g}$ satisfying the algebraic relation $u_{g}u_{h}=c(g,h) u_{gh}=c(g,h)u_{gh}$ and $u_{g}a=\pi(g)(a) u_{g}$. 

Then $(B\rtimes_{\pi, c} K)^{\circ}=\sum_{g\in K} Bu_{g}$, with multiplication induced from the above relations. Note that if $B$ is non-unital, then the $u_{g}$ do not live in $B\rtimes_{\pi, c} K$, but rather its multipler algebra. Also note that if $G$ is finite, then $(B\rtimes_{\pi, c} K)^{\circ} =B\rtimes_{\pi, c} K$.

Our goal will be to use the given data to define a $\text{U}(1)$-linear monoidal functor $\text{2-Gr}(G,\text{U}(1), \omega)\rightarrow \underline{\text{Aut}}(B\rtimes_{\pi, c} K)$. To perform this construction, pick a set theoretical section of $\rho: G\rightarrow Q$, $q\mapsto \hat{q}\in G$. Then for each $q\in Q$, we define an automorphism of $B\rtimes_{\pi, c} K$ via its action on Fourier coefficients by

$$\alpha_{q}(au_{k}):= c(\hat{q}k \hat{q}^{-1},\hat{q})^{-1}c(\hat{q},k) \pi(\hat{q})(a) u_{\hat{q}k \hat{q}^{-1}},$$

\noindent and then extends linearly to obtain a map on $(B\rtimes_{\pi, c} K)^{\circ}$.

\begin{lem} $\alpha_{q}$ extends to a $*$-automorphism of $B\rtimes_{\pi, c} K$.
\end{lem}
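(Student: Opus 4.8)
The plan is to verify that the formula for $\alpha_q$ on Fourier coefficients defines a $*$-homomorphism on the dense $*$-subalgebra $C_c(K,B) = \sum_{g\in K} Bu_g$, that it is bijective there with an explicit inverse, and then to argue it extends to the full (reduced) crossed product by a standard universality/boundedness argument. Concretely, first I would record the convention that $\alpha_q$ is defined by $\alpha_q(au_k) := c(\hat q k\hat q^{-1},\hat q)^{-1} c(\hat q, k)\,\pi(\hat q)(a)\,u_{\hat q k\hat q^{-1}}$ and check it is well defined (each Fourier coefficient is treated independently, and the scalars are in $\mathrm{U}(1)$ so nothing degenerates).

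The main computational step is multiplicativity: compute $\alpha_q(au_k\cdot bu_l)$ and $\alpha_q(au_k)\alpha_q(bu_l)$ and show they agree. Using the relations $u_k b = \pi(k)(b)u_k$ and $u_ku_l = c(k,l)u_{kl}$, the left side becomes (up to the scalar $c(k,l)$ and after applying the defining formula with group element $kl$) a term involving $c(\hat q (kl)\hat q^{-1},\hat q)^{-1} c(\hat q, kl)\,\pi(\hat q)(a\,\pi(k)(b))$, while the right side produces $c(\hat q k\hat q^{-1},\hat q)^{-1}c(\hat q,k)c(\hat q l\hat q^{-1},\hat q)^{-1}c(\hat q,l)$ times a cocycle scalar $c(\hat q k\hat q^{-1},\hat q l\hat q^{-1})$ coming from multiplying $u_{\hat q k\hat q^{-1}}u_{\hat q l\hat q^{-1}}$. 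Matching the two requires exactly the $2$-cochain identity $dc = \rho^*(\omega)$ from equation~\eqref{cocycleeq}, applied to suitable triples built from $\hat q, k, l$ and their conjugates, together with the fact that $\rho^*(\omega)$ is trivial whenever an argument lies in $K$ (equation~\eqref{kereq}), which kills the $\omega$-factors that would otherwise obstruct multiplicativity since $k,l\in K$. The $*$-preserving property is the analogous but shorter check on $(au_k)^* = u_k^* a^* = \pi(k^{-1})(a^*)u_{k^{-1}}$, again using $dc=\rho^*(\omega)$ restricted to arguments in $K$.

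For bijectivity I would exhibit the inverse: a natural candidate is $\beta_q := \alpha_{q^{-1}}$ up to an inner perturbation, or more safely, directly verify that the map sending $au_k \mapsto (\text{scalar})\,\pi(\hat q^{-1})(a)u_{\hat q^{-1}k\hat q}$ with scalars read off from the formula composes with $\alpha_q$ to the identity on $C_c(K,B)$; this is another application of the same cochain identity. Thus $\alpha_q$ is a $*$-automorphism of the dense $*$-subalgebra.

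Finally, to pass to the C*-completion: the reduced twisted crossed product carries its norm via the regular representation, so it suffices to note that $\alpha_q$ is implemented (or at least bounded) at the level of the regular representation. The cleanest route is to observe that $\alpha_q$ arises from an isomorphism of twisted dynamical systems after adjusting $\pi$ by conjugation with $\pi(\hat q)$ and $c$ by the relevant coboundary, so it is automatically isometric on the reduced completion by functoriality of the reduced crossed product construction; alternatively invoke that a $*$-isomorphism between dense $*$-subalgebras that intertwines the regular representations extends. I expect the arithmetic bookkeeping of the $c$-scalars in the multiplicativity check to be the only real obstacle — there are four or five $c$-factors on each side plus conjugation-by-$\hat q$ arguments, and one has to group the triples correctly so that every application of $dc=\rho^*(\omega)$ lands on a triple with at least one $K$-entry, forcing all $\omega$-contributions to cancel; everything else is routine.
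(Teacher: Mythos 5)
Your proposal is correct and follows essentially the same route as the paper: verify multiplicativity and the $*$-property on the dense subalgebra $\sum_{k\in K}Bu_{k}$ by repeatedly applying $dc=\rho^{*}(\omega)$ to triples with at least one entry in $K$ (so the $\omega$-factors vanish by normalization), then pass to the reduced completion. The only cosmetic difference is in the last step, where the paper extends to the completion by noting that $\alpha_{q}$ intertwines the canonical conditional expectation onto $B$ (up to $\pi(\hat q)$), whereas you invoke functoriality of the reduced crossed product under isomorphisms of twisted dynamical systems; both are standard and valid, and bijectivity on the dense subalgebra is in any case immediate since $k\mapsto \hat q k\hat q^{-1}$ permutes the Fourier components and $\pi(\hat q)$ is an automorphism.
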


\begin{proof}

We will show this extends to a *-automorphism of the subalgebra $\sum_{g\in K} \text B u_{g}$. Observe the projection $E: \sum_{g\in K} \text B u_{g}\rightarrow B$ which picks off the identity component is invariant under each $\alpha_{q}$, and thus if $\alpha_{q}$ is a $*$-automorphism of $(B\rtimes_{\pi, c} K)^{\circ}$, then it will extend to a *-automorphism on the reduced completion. 

For any $k,l\in K$ and $g\in G$, using Equations \ref{cocycleeq} and \ref{kereq}, we have the following equation:

\begin{align*}
&c(g, k) c(gkg^{-1},g)^{-1}c(g,l)c(glg^{-1},g)^{-1}  \\
=&c(g, k) c(gkg^{-1},g)^{-1}\left[c(g,l)c(gkg^{-1},gl)\right]c(gkg^{-1},gl)^{-1}c(glg^{-1},g)^{-1}  \\
=&c(g, k) c(gkg^{-1},g)^{-1}\left[ \rho^{*}(\omega)(gkg^{-1},g,l)^{-1}c(gkg^{-1},g) c(gk,l)\right] c(gkg^{-1},gl)^{-1}c(glg^{-1},g)^{-1}\\
=&c(g, k) c(gk,l)c(gkg^{-1},gl)^{-1}c(glg^{-1},g)^{-1}\\
=&\left[ \rho^{*}(\omega)(g,k,l)c(k,l)c(g,kl)\right] \left[\rho^{*}(gkg^{-1},glg^{-1},g) c(gklg^{-1},g)^{-1}c(gkg^{-1},glg^{-1})^{-1}\right]\\
=&\left[c(g,kl) c(k,l)\right] \left[ c(gklg^{-1},g)^{-1}c(gkg^{-1},glg^{-1})^{-1}\right].\\
\end{align*}

\medskip

\noindent Now we compute

\begin{align*}
\alpha_{q}(au_{k})\alpha_{q}(bu_{l})&=\left[c(\hat{q}k \hat{q}^{-1},\hat{q})^{-1}c(\hat{q},k) \pi(\hat{q})(a) u_{\hat{q}k \hat{q}^{-1}}\right] \left[c(\hat{q}l \hat{q}^{-1},\hat{q})^{-1}c(\hat{q},l) \pi(\hat{q})(b) u_{\hat{q}l \hat{q}^{-1}}\right]\\
&=\left[c(\hat{q},k)c(\hat{q}k\hat{q}^{-1}, \hat{q})^{-1}c(\hat{q},l)c(\hat{q}l \hat{q}^{-1},\hat{q})^{-1}\right]c(\hat{q}k\hat{q}^{-1},\hat{q}l\hat{q}^{-1}) \pi(\hat{q})(a\pi(k)(b)) u_{\hat{q}kl\hat{q}^{-1}}.
\end{align*}

\noindent But using the first derived equation (and substituting $\hat{q}$ for $g$), we can replace the term in the square brackets to obtain

\begin{align*}
\alpha_{q}(au_{k})\alpha_{q}(bu_{l})&=c(\hat{q},kl)c(k,l)c(\hat{q}kl\hat{q}^{-1},\hat{q})^{-1} \pi(\hat{q})(a\pi(k)(b)) u_{\hat{q}kl\hat{q}^{-1}}\\
&=\alpha_{q}(c(k,l) a\pi(k)(b) u_{kl})\\
&=\alpha_{q}(au_{k}bu_{l}).\\
\end{align*}

\noindent This shows that $\alpha_{q}$ is an automorphism. To check the $*$-property, note that since $c$ is normalized we have $u^{*}_{k}=c(k,k^{-1})^{-1}u_{k^{-1}}$, and $c(k,k^{-1})=c(k^{-1},k)$ since $c|_{K}$ is a normalized 2-cocycle. Thus 

\begin{align*}
\alpha_{q}((au_{k})^{*})&=c(k,k^{-1})^{-1}\alpha_{q}(\pi(k^{-1})(a^{*})u_{k^{-1}})\\
&= c(k,k^{-1})^{-1} c(\hat{q},k^{-1}) c(\hat{q}k^{-1}\hat{q}^{-1}, \hat{q})^{-1}\pi(\hat{q}k^{-1})(a^{*})u_{\hat{q}k^{-1}\hat{q}^{-1}}.\\
\end{align*}

\noindent On the other hand, $$\alpha_{q}(au_{k})^{*} = c(\hat{q}k \hat{q}^{-1},\hat{q}k^{-1} \hat{q}^{-1})^{-1} c(\hat{q}k \hat{q}^{-1},\hat{q})c(\hat{q},k)^{-1} \pi(\hat{q}k^{-1})(a^{*}) u_{\hat{q}k^{-1} \hat{q}^{-1}}.$$

\noindent Examining the scalar in our expression for $\alpha_{q}((au_{k})^{*})$ (and again using Equations \ref{cocycleeq} and \ref{kereq}), we have

\begin{align*}
c(k,k^{-1})^{-1} c(\hat{q},k^{-1}) c(\hat{q}k^{-1}\hat{q}^{-1}, \hat{q})^{-1}
&=c(k^{-1}, k)^{-1} c(\hat{q},k^{-1})\left[c(\hat{q}k^{-1}, k)c(\hat{q}k^{-1}, k)^{-1}\right] c(\hat{q}k^{-1}\hat{q}^{-1}, \hat{q})^{-1}\\
&=c(\hat{q}k^{-1}, k)^{-1} c(\hat{q}k^{-1}\hat{q}^{-1}, \hat{q})^{-1}\\
&=c(\hat{q}k^{-1}\hat{q}^{-1}, \hat{q}k)^{-1}c(\hat{q},k)^{-1}.\\
\end{align*}

\noindent Performing a similar computation, the scalar in our above expression for $\alpha_{q}(au_{k})^{*}$ we obtain

\begin{align*}
&c(\hat{q}k^{-1}\hat{q}^{-1},\hat{q}k^{-1}\hat{q}^{-1})^{-1} c(\hat{q}k \hat{q}^{-1},\hat{q})c(\hat{q},k)^{-1}\\
=&c(\hat{q}k \hat{q}^{-1},\hat{q}k^{-1} \hat{q}^{-1})^{-1} c(\hat{q}k \hat{q}^{-1},\hat{q})\left[c(\hat{q}k^{-1}\hat{q}^{-1}, \hat{q}k) c(\hat{q}k^{-1}\hat{q}^{-1}, \hat{q}k)^{-1}\right]c(\hat{q},k)^{-1}\\
=&c(\hat{q}k^{-1}\hat{q}^{-1}, \hat{q}k)^{-1}c(\hat{q},k)^{-1}.\\
\end{align*}

\end{proof}

Now that we have built *-automophisms $\alpha_{q}$, we need intertwiners $m_{q,r}\in B\rtimes_{\pi, \delta} K$ that satisfy $m_{q,r}\alpha_{q}(\alpha_{r}(x))=\alpha_{qr}(x)m_{q,r}$ for all $x\in B\rtimes_{\pi, c} K$. Note that $\hat{q}\hat{r}=\gamma(q,r)\widehat{qr}$ for a uniquely determined (by the lift) function $\gamma: Q\times Q\rightarrow K$. 

\begin{lem}
Define the unitaries $m_{q,r}:=c(\hat{q},\hat{r})^{-1}c(\gamma(q,r),\widehat{qr}) u^{*}_{\gamma(q,r)}\in B\rtimes_{\pi, c} K$. Then for all $x\in B\rtimes_{\pi, c} K$,

$$m_{q,r}\ \alpha_{q}(\alpha_{r}(x))=\alpha_{qr}(x)\ m_{q,r}.$$

\end{lem}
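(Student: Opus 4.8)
The plan is to verify the intertwining relation $m_{q,r}\,\alpha_q(\alpha_r(x)) = \alpha_{qr}(x)\,m_{q,r}$ by checking it on the spanning set of elements $x = a u_k$ for $a\in B$, $k\in K$, since both sides are $*$-homomorphisms composed with multiplication by a unitary and hence it suffices to check on a dense set. First I would compute $\alpha_q(\alpha_r(a u_k))$ explicitly. Applying $\alpha_r$ gives a scalar times $\pi(\hat r)(a) u_{\hat r k \hat r^{-1}}$, and applying $\alpha_q$ to that gives a further scalar times $\pi(\hat q \hat r)(a) u_{\hat q \hat r k \hat r^{-1}\hat q^{-1}}$. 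On the other hand $\alpha_{qr}(a u_k)$ is a scalar times $\pi(\widehat{qr})(a) u_{\widehat{qr}\, k\, \widehat{qr}^{-1}}$. Using $\hat q \hat r = \gamma(q,r)\widehat{qr}$, the automorphism $\pi(\hat q\hat r)$ equals $\pi(\gamma(q,r))\circ \pi(\widehat{qr})$, and conjugating $u_k$ accordingly rewrites $u_{\hat q\hat r k \hat r^{-1}\hat q^{-1}}$ in terms of $u_{\gamma(q,r)}\, u_{\widehat{qr}k\widehat{qr}^{-1}}\, u_{\gamma(q,r)}^{-1}$ up to $c$-scalars. So the comparison reduces to moving the unitary $u_{\gamma(q,r)}^*$ (the nontrivial part of $m_{q,r}$) past $\alpha_{qr}(a u_k)$ and matching all the accumulated $\text{U}(1)$-valued scalars.

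The key steps, in order, are: (1) expand both sides on $x = au_k$ using the definition of $\alpha_q$ and the crossed product relations $u_g a = \pi(g)(a) u_g$, $u_g u_h = c(g,h) u_{gh}$, $u_g^* = c(g,g^{-1})^{-1} u_{g^{-1}}$; (2) on the left side, combine the two layers of $\alpha$'s into a single scalar $S_{\mathrm{left}}(q,r,k)$ times $\pi(\hat q\hat r)(a) u_{\hat q\hat r k (\hat q\hat r)^{-1}}$; (3) on the right side, multiply $\alpha_{qr}(au_k)$ on the right by $m_{q,r}$, using that $u_{\widehat{qr}k\widehat{qr}^{-1}}\, u_{\gamma(q,r)}^* = $ (scalar) $u_{\widehat{qr}k\widehat{qr}^{-1}\gamma(q,r)^{-1}}$, then reconcile the group element in the exponent with $\hat q\hat r k (\hat q\hat r)^{-1}$ via $\hat q\hat r = \gamma(q,r)\widehat{qr}$; (4) reduce the identity to an equality of $\text{U}(1)$-scalars involving only values of $c$ at arguments built from $\hat q, \hat r, \widehat{qr}, \gamma(q,r), k$, and then grind it out using the cocycle identity \eqref{cocycleeq} and the normalization \eqref{kereq} (which applies because $\gamma(q,r)\in K$ and $k\in K$, killing many $\rho^*(\omega)$ terms). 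The bookkeeping here will parallel the "first derived equation" used in the previous lemma's proof.

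The main obstacle will be step (4): organizing the $\text{U}(1)$-cocycle manipulation cleanly. There are roughly a dozen factors of $c$ evaluated at various products, and one must repeatedly insert "$c(\cdot,\cdot)c(\cdot,\cdot)^{-1}$" pairs and apply \eqref{cocycleeq} to telescope them, using \eqref{kereq} to discard the $\rho^*(\omega)$ factors whenever one of the three arguments lies in $K$. Because $\gamma(q,r)$ and $k$ are in $K$, but $\hat q$, $\hat r$, $\widehat{qr}$ generally are not, one has to be careful about exactly which $\rho^*(\omega)$ terms vanish. A secondary, more conceptual point worth flagging: it suffices to verify the relation on the dense $*$-subalgebra $C_c(K,B)$ because an intertwiner relation that holds on a dense subalgebra and involves fixed multiplier unitaries extends by continuity to all of $B\rtimes_{\pi,c}K$; I would state this reduction explicitly at the start. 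Finally, one should check $m_{q,r}$ is genuinely a unitary in (the multiplier algebra of) $B\rtimes_{\pi,c}K$, which is immediate since it is a scalar of modulus one times $u_{\gamma(q,r)}^*$.
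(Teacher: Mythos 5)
Your proposal is correct and follows essentially the same route as the paper: both verify the relation on Fourier terms $au_{k}$, use $\hat{q}\hat{r}=\gamma(q,r)\widehat{qr}$ to match the group elements, and reduce to a scalar identity in $\text{U}(1)$ that is telescoped via Equation \eqref{cocycleeq} and the normalization \eqref{kereq}. The only cosmetic difference is that the paper first replaces $m_{q,r}$ by the scalar multiple $\widetilde{m}_{q,r}=c(\hat{q},\hat{r})c(\gamma(q,r),\widehat{qr})^{-1}u_{\gamma(q,r)^{-1}}$ to avoid carrying the adjoint $u^{*}_{\gamma(q,r)}$ through the computation.
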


\begin{proof}

Set
$$\widetilde{m}_{q,r}:=c(\hat{q},\hat{r})c(\gamma(q,r),\widehat{qr})^{-1}u_{\gamma(q,r)^{-1}}.$$

Note that $\widetilde{m}_{q,r}$ differs from $m_{q,r}$ by a unitary scalar (depending only on $q$ and $r$), and so the desired equation holds if and only if $\widetilde{m}_{q,r}\ \alpha_{q}(\alpha_{r}(x))=\alpha_{qr}(x)\ \widetilde{m}_{q,r}$. We compute the left hand side of this equation applied to a Fourier term $au_{k}$. First note that

\begin{align*}
\alpha_{q}(\alpha_{r}(au_{k}))&=c(\hat{r},k)c(\hat{r}k\hat{r}^{-1},\hat{r})^{-1}c(\hat{q},\hat{r}k\hat{r}^{-1})c(\hat{q}\hat{r}k\hat{r}^{-1}\hat{q}^{-1}, \hat{q})^{-1} c(\gamma(q,r),\widehat{qr}k\hat{r}^{-1}\hat{q}^{-1})^{-1}\\
&\times\ c(\widehat{qr}k\widehat{qr}^{-1}, \gamma(q,r)^{-1})^{-1}\ \pi(\gamma(q,r)\widehat{qr})(a)\ u_{\gamma(q,r)}\ u_{\widehat{qr}k\widehat{qr}^{-1}} u_{\gamma(q,r)^{-1}}.
\end{align*}

\noindent Then we see

$$\widetilde{m}_{q,r}\alpha_{q}(\alpha_{r}(au_{k}))=\lambda \pi(\widehat{qr})(a) u_{\widehat{qr}k\widehat{qr}^{-1}} u_{\gamma(q,r)^{-1}},$$

\noindent where

\begin{align*}
 \lambda&=c(\gamma(q,r)^{-1},\gamma(q,r))\ c(\hat{q},\hat{r})\ c(\gamma(q,r),\widehat{qr})^{-1}\ c(\hat{r},k)\ c(\hat{r}k\hat{r}^{-1},\hat{r})^{-1}\ c(\hat{q},\hat{r}k\hat{r}^{-1})\\
 &\times c(\hat{q}\hat{r}k\hat{r}^{-1}\hat{q}^{-1}, \hat{q})^{-1} c(\gamma(q,r),\widehat{qr}k\hat{r}^{-1}\hat{q}^{-1})^{-1}c(\widehat{qr}k\widehat{qr}^{-1}, \gamma(q,r)^{-1})^{-1}.
\end{align*}

\noindent But the product of the first in third term of the above expression may be rewritten

\begin{align*}
&c(\gamma(q,r)^{-1},\gamma(q,r))c(\gamma(q,r), \widehat{qr})^{-1}\\
=&\left[c(\gamma(q,r)^{-1}, \hat{q}\hat{r})c(\gamma(q,r)^{-1}, \hat{q}\hat{r})^{-1}\right]c(\gamma(q,r), \widehat{qr})^{-1}c(\gamma(q,r)^{-1},\gamma(q,r))\\   
=&c(\gamma(q,r)^{-1},\hat{q}\hat{r}).\\
\end{align*}

\noindent After making this substitution and rearranging, we have 

\begin{align*}
\lambda&=\left[  \ c(\hat{r},k)\ c(\hat{r}k\hat{r}^{-1},\hat{r})^{-1}\ c(\hat{q},\hat{r}k\hat{r}^{-1})\right] \left[ c(\hat{q}\hat{r}k\hat{r}^{-1}\hat{q}^{-1}, \hat{q})^{-1} c(\hat{q},\hat{r})\right]\\
&\times \left[ c(\gamma(q,r),\widehat{qr}k\hat{r}^{-1}\hat{q}^{-1})^{-1}c(\widehat{qr}k\widehat{qr}^{-1}, \gamma(q,r)^{-1})^{-1} c(\gamma(q,r)^{-1},\hat{q}\hat{r})\right].
\end{align*}

\noindent Using Equations \ref{cocycleeq} and \ref{kereq} we see the term in the second bracket can be rewritten as

$$ c(\hat{q}\hat{r}k\hat{r}^{-1}\hat{q}^{-1}, \hat{q})^{-1} c(\hat{q},\hat{r}) =c(\hat{q}\hat{r}k\hat{r}^{-1}, \hat{r})c(\hat{q}\hat{r}k\hat{r}^{-1}\hat{q}^{-1}, \hat{q}\hat{r})^{-1}.$$

\noindent Then

\begin{align*}
\lambda&=\left[  \ c(\hat{r},k)\ c(\hat{r}k\hat{r}^{-1},\hat{r})^{-1}\ c(\hat{q},\hat{r}k\hat{r}^{-1}) c(\hat{q}\hat{r}k\hat{r}^{-1}, \hat{r}) \right]\\
&\times \left[ c(\hat{q}\hat{r}k\hat{r}^{-1}\hat{q}^{-1}, \hat{q}\hat{r})^{-1} c(\gamma(q,r),\widehat{qr}k\hat{r}^{-1}\hat{q}^{-1})^{-1}c(\widehat{qr}k\widehat{qr}^{-1}, \gamma(q,r)^{-1})^{-1} c(\gamma(q,r)^{-1},\hat{q}\hat{r})\right].
\end{align*}

\noindent But the first bracket in the above expression can be rewritten (using Equations \ref{cocycleeq} and \ref{kereq}) as

$$\ c(\hat{r},k)\ c(\hat{r}k\hat{r}^{-1},\hat{r})^{-1}\ c(\hat{q},\hat{r}k\hat{r}^{-1}) c(\hat{q}\hat{r}k\hat{r}^{-1}, \hat{r})=c(\hat{r},k)c(\hat{q},\hat{r}k)=c(\hat{q},\hat{r})c(\hat{q}\hat{r},k),$$

\noindent while for the second we have

\begin{align*}
&c(\hat{q}\hat{r}k\hat{r}^{-1}\hat{q}^{-1}, \hat{q}\hat{r})^{-1} c(\gamma(q,r),\widehat{qr}k\hat{r}^{-1}\hat{q}^{-1})^{-1}c(\widehat{qr}k\widehat{qr}^{-1}, \gamma(q,r)^{-1})^{-1} c(\gamma(q,r)^{-1},\hat{q}\hat{r}) \\
=&c(\gamma(q,r),\widehat{qr}k)^{-1}c(\widehat{qr}k\widehat{qr}^{-1}, \widehat{qr})^{-1}.\\
\end{align*}

\noindent Altogether, this implies

\begin{align*}
\lambda&=c(\hat{q},\hat{r})\left[c(\hat{q}\hat{r},k)c(\gamma(q,r),\widehat{qr}k)^{-1}\right]c(\widehat{qr}k\widehat{qr}^{-1}, \widehat{qr})^{-1} \\
&=c(\hat{q},\hat{r})c(\gamma(q,r),\widehat{qr})^{-1}c(\widehat{qr},k)c(\widehat{qr}k\widehat{qr}^{-1}, \widehat{qr})^{-1}.
\end{align*}

\noindent On the other hand, 

$$\alpha_{qr}(au_{k})\ \widetilde{m}_{q,r}=c(\hat{q},\hat{r})c(\gamma(q,r),\widehat{qr})^{-1}c(\widehat{qr}k\widehat{qr}^{-1},\widehat{qr})^{-1}c(\widehat{qr},k)\pi(\widehat{qr})(a)u_{\widehat{qr}k\widehat{qr}^{-1}}u_{\gamma(q,r)^{-1}}.$$

\end{proof}

The following lemma concludes the proof of Theorem \ref{constructingtheaction}:

\begin{lem}\label{action}
With notation as above, we have $$ \omega(q,r,s)m_{qr,s}m_{q,r}= m_{q,rs}\alpha_{q}(m_{r,s}).$$ In particular the assignment $q\mapsto \alpha_{q}$ extends to a $\text{U}(1)$-linear functor $(G, \text{U}(1), \omega)\rightarrow \underline{\text{Aut}}(B \rtimes_{\pi, c} K)$.

\end{lem}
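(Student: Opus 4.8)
The identity is verified by a direct (if lengthy) computation in the multiplier algebra of $B\rtimes_{\pi,c}K$, entirely parallel to the two preceding lemmas. The key observation is that both $m_{q,r}$ and $\alpha_q(m_{r,s})$ are \emph{monomials} --- a unitary scalar times a single $u_k$. Indeed $m_{q,r}=c(\hat q,\hat r)^{-1}c(\gamma(q,r),\widehat{qr})\,u^{*}_{\gamma(q,r)}$, and writing $u^{*}_{k}=c(k,k^{-1})^{-1}u_{k^{-1}}$ together with the Fourier-coefficient formula for $\alpha_q$ shows $\alpha_q(m_{r,s})$ is again a scalar times $u_{\hat q\,\gamma(r,s)^{-1}\hat q^{-1}}$. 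Hence each side of the asserted identity is a scalar multiple of a single group-algebra element, and the proof splits into (i) matching the group elements and (ii) matching the scalars, up to the factor $\omega(q,r,s)$.

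For (i) I would first record the ``$2$-cocycle identity'' for $\gamma$ forced by associativity of the chosen section: from $\hat q(\hat r\hat s)=(\hat q\hat r)\hat s$ one reads off $\gamma(q,r)\,\gamma(qr,s)=\hat q\,\gamma(r,s)\,\hat q^{-1}\,\gamma(q,rs)$ in $K$. Multiplying out $m_{qr,s}m_{q,r}$ using $u_g u_h=c(g,h)u_{gh}$ gives a scalar times $u_{\gamma(qr,s)^{-1}\gamma(q,r)^{-1}}$, while $m_{q,rs}\,\alpha_q(m_{r,s})$ gives a scalar times $u_{\gamma(q,rs)^{-1}\hat q\,\gamma(r,s)^{-1}\hat q^{-1}}$; the two group elements coincide by the (inverted) $\gamma$-identity just stated.

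Step (ii) is the real computation. I would expand the left-hand scalar by the multiplication and $*$ rules, using $c(k,k^{-1})=c(k^{-1},k)$ (valid since $c|_{K}$ is a normalized $2$-cocycle), and expand the right-hand scalar by first applying the explicit formula for $\alpha_q$ to the Fourier term $u_{\gamma(r,s)^{-1}}$ and then multiplying by $m_{q,rs}$. Each side then becomes a product of roughly ten values of $c$; I would normalize both by inserting cancelling pairs $c(x,y)c(x,y)^{-1}$ at carefully chosen arguments --- exactly the manoeuvre used in the proofs of the previous two lemmas --- so that the cocycle relation \ref{cocycleeq} can be used to telescope the products. Every instance of $\rho^{*}(\omega)$ created along the way has at least one argument in $K$, hence equals $1$ by \ref{kereq}, with the \emph{single} exception of $\rho^{*}(\omega)(\hat q,\hat r,\hat s)=\omega(q,r,s)$; that surviving factor is precisely the scalar in the statement. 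Comparing what is left on the two sides finishes the identity.

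The only real obstacle is the bookkeeping: there is no conceptual subtlety, but one must arrange the ten-or-so $c$-factors on each side so that the telescoping via \ref{cocycleeq} goes through cleanly and keep track of which arguments land in $K$. A good consistency check is the specialization $B=\mathbbm{C}$ with $\pi$ trivial, where the statement reduces to the classical Eilenberg--MacLane fact that the section data $(\hat\cdot,\gamma,c)$ satisfy the relevant cocycle identity. Finally, combining this lemma with its two predecessors --- the $\alpha_q$ are $*$-automorphisms, and the $m_{q,r}$ are unitary intertwiners from $\alpha_q\circ\alpha_r$ to $\alpha_{qr}$ --- the identity above is exactly condition (2) of Definition \ref{Explicitdefn}. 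Thus $q\mapsto(\alpha_q,m_{q,r})$ is an $\omega$-anomalous action of $Q$ on $B\rtimes_{\pi,c}K$, i.e.\ a $\text{U}(1)$-linear monoidal functor $\text{2-Gr}(Q,\text{U}(1),\omega)\to\underline{\text{Aut}}(B\rtimes_{\pi,c}K)$ (its value on morphisms being forced by $\text{U}(1)$-linearity), which completes the proof of Theorem \ref{constructingtheaction}.
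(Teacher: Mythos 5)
Your proposal is correct and follows essentially the same route as the paper: both sides are reduced to a scalar multiple of $u^{*}_{\gamma(q,r)\gamma(qr,s)}$ via the identity $\hat{q}\gamma(r,s)\hat{q}^{-1}\gamma(q,rs)=\gamma(q,r)\gamma(qr,s)$ (obtained by expanding $\hat{q}\hat{r}\hat{s}$ two ways), and the scalars are matched by telescoping with Equations \ref{cocycleeq} and \ref{kereq}, with the single surviving factor $\rho^{*}(\omega)(\hat{q},\hat{r},\hat{s})=\omega(q,r,s)$ accounting for the anomaly. You leave the ten-or-so-term scalar bookkeeping as a plan rather than executing it, but every load-bearing identity you cite is exactly the one the paper uses.
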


\begin{proof}

We compute $$m_{q,rs}\alpha_{q}(m_{r,s})= \lambda u^{*}_{\hat{q}\gamma(r,s)\hat{q}^{-1}\gamma(q,rs)},$$

\noindent where 
\begin{align*}
 \lambda&=\left[c(\hat{r},\hat{s})^{-1}c(\gamma(r,s),\widehat{rs})c(\hat{q},\gamma(r,s))^{-1}\right]\\
 &\times \left[c(\hat{q}\gamma(r,s)\hat{q}^{-1},\hat{q})c(\hat{q},\widehat{rs})^{-1}c(\gamma(q,rs),\widehat{qrs})c(\hat{q}\gamma(r,s)\hat{q}^{-1},\gamma(q,rs))^{-1}\right].\\
\end{align*}

\noindent Then we have

\begin{align*}
\lambda&=\left[c(\hat{r},\hat{s})^{-1}c(\gamma(r,s),\widehat{rs})c(\hat{q},\gamma(r,s))^{-1}c(\hat{q}\gamma(r,s),\widehat{rs})^{-1}\right]\\
&\times \left[c(\hat{q}\gamma(r,s),\widehat{rs}) c(\hat{q}\gamma(r,s)\hat{q}^{-1},\hat{q})c(\hat{q},\widehat{rs})^{-1}c(\gamma(q,rs),\widehat{qrs})c(\hat{q}\gamma(r,s)\hat{q}^{-1},\gamma(q,rs))^{-1}\right]\\
&=\left[c(\hat{r},\hat{s})^{-1}c(\hat{q},\hat{r}\hat{s})^{-1}\right]\times \left[c(\hat{q}\gamma(r,s)\hat{q}^{-1}, \hat{q}\widehat{rs})c(\gamma(q,rs), \widehat{qrs})c(\hat{q}\gamma(r,s)\hat{q}^{-1},\gamma(q,rs))^{-1}\right]\\
&=c(\hat{r},\hat{s})^{-1}c(\hat{q},\hat{r}\hat{s})^{-1} c(\gamma(q,r)\gamma(qr,s),\widehat{qrs}).
\end{align*}

\noindent where in the last step we use the equality $\hat{q}\gamma(r,s)\hat{q}^{-1}\gamma(q,rs)=\gamma(q,r)\gamma(qr,s)$ obtained by expanding $\hat{q}\hat{r}\hat{s}$ two ways. On the other hand, we have

$$m_{qr,s}m_{q,r}=\lambda^{\prime}u^{*}_{\gamma(q,r)\gamma(qr,s)}=\lambda^{\prime}u^{*}_{\hat{q}\gamma(r,s)\hat{q}^{-1}\gamma(q,rs)},$$

where 

\begin{align*}
\lambda^{\prime}&=c(\hat{q},\hat{r})^{-1}c(\gamma(q,r),\widehat{qr})c(\widehat{qr},\widehat{qr}\hat{s})^{-1}c(\gamma(qr,s),\widehat{qrs})c(\gamma(q,r),\gamma(qr,s))^{-1}\\
&=\left[c(\hat{q},\hat{r})^{-1}c(\hat{q}\hat{r},\hat{q}\hat{r}\hat{s})^{-1}\right]  \times \left[ c(\hat{q}\hat{r},\hat{q}\hat{r}\hat{s})c(\gamma(q,r),\widehat{qr})c(\widehat{qr},\widehat{qr}\hat{s})^{-1}c(\gamma(qr,s),\widehat{qrs})c(\gamma(q,r),\gamma(qr,s))^{-1}\right]\\
&=\left[\rho^{*}(\omega)(\hat{q},\hat{r},\hat{s})^{-1}c(\hat{r},\hat{s})^{-1}c(\hat{q},\hat{r}\hat{s})^{-1}\right]\times \left[c(\gamma(q,r),\widehat{qr}\hat{s})c(\gamma(qr,s),\widehat{qrs})c(\gamma(q,r),\gamma(qr,s))^{-1}\right]\\
&=\omega^{-1}(q,r,s)\lambda.\\
\end{align*}

\end{proof}

\subsection{Free group examples.}\label{freegroupex} An easy class of examples to which we can easily apply Theorem \ref{buildingactions} involves free groups. Let $\mathbbm{F}_{n}$ denote the free group with $n$ generators. These have $H^{k}(\mathbbm{F}_{n}, \text{U}(1))=0$ for all $k\ge 2$. For any finitely generated $Q$ and $\omega\in H^{3}(G, \text{U}(1))$, we can find a surjection $\rho:F_{n}\rightarrow G$, which is completely determined by picking $n$ elements in $Q$ which generate. The kernel of $\rho$ is a free normal subgroup $K\le F_{n}$, with $K\cong \mathbbm{F}_{m}$ (with $m$ possibly $\infty$).

Restricting to the case $|Q|<\infty$ and $n>1$, we have the formula (e.g. \cite[Proposition 3.9]{MR1812024})

$$m=|Q|(n-1)+1.$$

\noindent Since $H^{3}(\mathbbm{F}_{n}, \text{U}(1))=1=H^{2}(\mathbbm{F}_{r},\text{U}(1))$, we can find a trivialization $c\in C^{2}(\mathbbm{F}_{n}, \text{U}(1))$ of $\rho^{*}(\omega)$ such that $c|_{K\cong \mathbbm{F}_{r}}=1$. Applying Theorem \ref{constructingtheaction} to the case $B=\mathbbm{C}$ immediately gives us the following Corollary:

\begin{cor}\label{freegroupcor}
Let $Q$ be a finite group, $\omega\in Z^{3}(Q, \text{U}(1))$, and pick a surjective homomorphism $\mathbbm{F}_{n}\rightarrow Q$, with $n>1$. Then there exists an $\omega$-anomalous $Q$ action on the reduced group C*-algebra $C^{*}_{r}(\mathbbm{F}_{m})$, where $m=|Q|(n-1)+1$.
\end{cor}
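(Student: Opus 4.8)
The plan is to realize the corollary as a direct application of Theorem \ref{constructingtheaction}, taking the C*-algebra $B$ to be the one-dimensional algebra $\mathbbm{C}$ and checking that in this case the twisted crossed product $B \rtimes_{\pi, c} K$ is precisely the twisted group C*-algebra $C^{*}_{r}(K, c|_{K})$, and then arranging the cocycle data so that $c|_{K}$ is trivial. Concretely, the input data required by Theorem \ref{constructingtheaction} are: the finite group $Q$ with its chosen $[\omega] \in H^{3}(Q, \text{U}(1))$ and a normalized representative $\omega$; a surjection $\rho : G \to Q$; a normalized $c \in C^{2}(G, \text{U}(1))$ with $dc = \rho^{*}(\omega)$; and a homomorphism $\pi : G \to \text{Aut}(B)$. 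We take $G = \mathbbm{F}_{n}$ with the given surjection $\rho$, so the kernel $K$ is a free group, and by the Nielsen--Schreier index formula (e.g.\ \cite[Proposition 3.9]{MR1812024}) $K \cong \mathbbm{F}_{m}$ with $m = |Q|(n-1)+1$ when $n > 1$.

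The first step is to produce the trivialization $c$ with the additional property $c|_{K} = 1$. Since $\mathbbm{F}_{n}$ is free, $H^{3}(\mathbbm{F}_{n}, \text{U}(1)) = 0$, so $\rho^{*}(\omega)$ is a coboundary: there is some $c' \in C^{2}(\mathbbm{F}_{n}, \text{U}(1))$ with $dc' = \rho^{*}(\omega)$ (and we may take $c'$ normalized). By Equation \eqref{kereq}, $\rho^{*}(\omega)$ vanishes whenever any argument lies in $K$; in particular $c'|_{K} \in Z^{2}(K, \text{U}(1))$. Since $K \cong \mathbbm{F}_{m}$ is free, $H^{2}(K, \text{U}(1)) = 0$, so $c'|_{K} = db$ for some normalized $b \in C^{1}(K, \text{U}(1))$. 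Extend $b$ arbitrarily (respecting normalization) to a normalized $1$-cochain $\tilde{b} \in C^{1}(\mathbbm{F}_{n}, \text{U}(1))$ — this is possible since there is no constraint beyond $\tilde{b}(1) = 1$ — and set $c := c' \cdot (d\tilde{b})^{-1}$. Then $dc = dc' = \rho^{*}(\omega)$ still, $c$ is normalized, and $c|_{K} = c'|_{K} \cdot (db)^{-1} = 1$, as desired.

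The second step is the identification of the algebra. Take $\pi : \mathbbm{F}_{n} \to \text{Aut}(\mathbbm{C})$ to be the trivial homomorphism (the only one available, since $\text{Aut}(\mathbbm{C})$ is trivial). Then $\mathbbm{C} \rtimes_{\pi, c} K$ is, by the description in the text with $B = \mathbbm{C}$, the closure of $\sum_{g \in K} \mathbbm{C}u_{g}$ with $u_{g}u_{h} = c(g,h) u_{gh}$ — i.e.\ the reduced twisted group C*-algebra $C^{*}_{r}(K, c|_{K})$. Since we arranged $c|_{K} = 1$, this is just the ordinary reduced group C*-algebra $C^{*}_{r}(K) = C^{*}_{r}(\mathbbm{F}_{m})$. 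Applying Theorem \ref{constructingtheaction} to this data then yields an $\omega$-anomalous action of $Q$ on $C^{*}_{r}(\mathbbm{F}_{m})$, which is the claim.

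There is no serious obstacle here: the corollary is essentially a packaging of Theorem \ref{constructingtheaction} in the simplest case, and all the work has already been done. The only point requiring a small amount of care is the order of quantifiers in the trivialization argument — one must first trivialize $\rho^{*}(\omega)$ on all of $\mathbbm{F}_{n}$, and then correct by a global $1$-cochain extending a trivialization of the restriction to $K$; this uses precisely the freeness of both $\mathbbm{F}_{n}$ and its subgroup $K$, together with the normalization in Equation \eqref{kereq} that guarantees $c'|_{K}$ is a genuine cocycle. (This is exactly the mechanism isolated in the "trivialization on the kernel" part of Lemma \ref{trivialize3cocycle}, which the author flagged as crucial.) One should also note the mild hypothesis $n > 1$ is needed only so that the index formula gives a finite $m$; for $n = 1$ the group $\mathbbm{F}_{1} = \mathbbm{Z}$ has no surjection onto a non-cyclic $Q$ and the statement degenerates.
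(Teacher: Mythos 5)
Your proposal is correct and follows essentially the same route as the paper: trivialize $\rho^{*}(\omega)$ on $\mathbbm{F}_{n}$ using $H^{3}(\mathbbm{F}_{n},\text{U}(1))=0$, correct by the coboundary of a global extension of a $1$-cochain trivializing the restriction to the free kernel $K\cong\mathbbm{F}_{m}$ (using $H^{2}(K,\text{U}(1))=0$), and apply Theorem \ref{constructingtheaction} with $B=\mathbbm{C}$. The paper states the existence of such a $c$ with $c|_{K}=1$ more tersely, but the extension-and-correction mechanism you spell out is exactly the one it uses (compare the end of the proof of Lemma \ref{trivialize3cocycle} and Section \ref{ActionsonS}).
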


If we consider the case $n=1$, then to have a surjective homomorphism $\rho:\mathbbm{F}_{1}\cong \mathbbm{Z}\rightarrow Q$, we must have $Q=\mathbbm{Z}/m\mathbbm{Z}$. In the case, the kernel of $\rho$ is $m\mathbbm{Z}\cong \mathbbm{Z}$. Recall that for an irrational real number, we can consider the action of $\mathbbm{Z}$ on $C(S^{1})$ via $g(f(z))=f(e^{-2\pi i\theta}z)$, where $g$ is the cyclic generator of $\mathbbm{Z}$. Then the \textit{irrational rotation algebra} $A_{\theta}$ is defined as the crossed product $A:=C(S^{1})\rtimes \mathbbm{Z}$. We have the following Corollary:

\begin{cor}\label{Irrationalrotation}
For any irrational real number $\theta$ and any $\omega\in Z^{3}(\mathbbm{Z}/m\mathbbm{Z}, \text{U}(1))$, there exists an an $\omega$-anomalous action of $\mathbbm{Z}/m\mathbbm{Z}$ on the irrational rotation algebra $A_{\theta}$.
\end{cor}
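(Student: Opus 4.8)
The plan is to apply Theorem \ref{constructingtheaction} with $Q = \mathbbm{Z}/m\mathbbm{Z}$ and $G = \mathbbm{Z}$, exactly as in the $n=1$ discussion just above the statement. First I would take $\rho: \mathbbm{Z} \to \mathbbm{Z}/m\mathbbm{Z}$ to be the canonical quotient map, whose kernel is $K = m\mathbbm{Z} \cong \mathbbm{Z}$. For the automorphism data, I would let $\pi: \mathbbm{Z} \to \text{Aut}(C(S^1))$ be the rotation action $g(f(z)) = f(e^{-2\pi i \theta} z)$ of the cyclic generator $g$ of $\mathbbm{Z}$, where $\theta$ is the fixed irrational number; this is a genuine homomorphism into the \emph{group} of $*$-automorphisms, as required by the theorem.

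Next I would produce the cochain $c \in C^2(\mathbbm{Z}, \text{U}(1))$ with $dc = \rho^*(\omega)$. Since $\mathbbm{Z}$ is free, $H^3(\mathbbm{Z}, \text{U}(1)) = 0$, so $\rho^*(\omega)$ is a coboundary and such a $c$ exists. Moreover, since $K \cong \mathbbm{Z}$ is free, $H^2(K, \text{U}(1)) = 0$, so by the kernel-trivialization part of Lemma \ref{trivialize3cocycle} I can arrange $c|_K = 1$. (Even without invoking that lemma, one can note directly: the restriction $c|_{m\mathbbm{Z}}$ is a $2$-cocycle on $\mathbbm{Z}$, hence a coboundary $d\beta$ for some $\beta \in C^1(m\mathbbm{Z}, \text{U}(1))$, and after extending $\beta$ arbitrarily to a $1$-cochain on $\mathbbm{Z}$ one replaces $c$ by $c \cdot (d\beta)^{-1}$, which still trivializes $\rho^*(\omega)$ since $d(d\beta) = 1$, and now restricts trivially to $K$.) With $c|_K = 1$ the twisted crossed product degenerates: $C(S^1) \rtimes_{\pi, c} K = C(S^1) \rtimes_{\pi} m\mathbbm{Z}$, the ordinary crossed product of $C(S^1)$ by the rotation by $e^{-2\pi i m \theta}$.

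The remaining point is to identify this crossed product with $A_\theta$. Since $\theta$ is irrational, $m\theta$ is also irrational, so $C(S^1) \rtimes_{\pi} m\mathbbm{Z} \cong A_{m\theta}$ by definition of the irrational rotation algebra. It is a classical fact (Pimsner--Voiculescu; or via $K$-theory and the Elliott classification of AT-algebras, noting that $A_{m\theta}$ and $A_\theta$ have the same ordered $K_0$, since $\mathbbm{Z} + m\theta\mathbbm{Z}$ and $\mathbbm{Z} + \theta\mathbbm{Z}$ are order-isomorphic as subgroups of $\mathbbm{R}$ once we rescale) that $A_{m\theta} \cong A_\theta$ for every nonzero integer $m$. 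Actually it is cleanest to observe $A_{m\theta} \cong A_{\theta'}$ where $\theta'$ is irrational, and $A_{\theta_1} \cong A_{\theta_2}$ iff $\theta_1 \equiv \pm\theta_2 \pmod{\mathbbm{Z}}$; but even short of the full classification, for the statement as phrased I would simply cite that $A_{m\theta} \cong A_\theta$ for irrational $\theta$ and $m \neq 0$. Transporting the $\omega$-anomalous action of $\mathbbm{Z}/m\mathbbm{Z}$ on $A_{m\theta}$ through this isomorphism (monoidal equivalence of $\underline{\text{Aut}}$ under $*$-isomorphism, as remarked after Definition \ref{anomalousaction}) gives the desired $\omega$-anomalous action on $A_\theta$.

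The main obstacle is the last step: identifying $C(S^1) \rtimes_\pi m\mathbbm{Z} \cong A_{m\theta}$ with $A_\theta$. Everything before that is a mechanical instantiation of Theorem \ref{constructingtheaction} together with the freeness of $\mathbbm{Z}$ (trivial cohomology in degrees $\ge 2$), so no obstruction arises there; the real content is invoking the known classification/isomorphism $A_{m\theta} \cong A_\theta$ for the integer $m$ and irrational $\theta$, which is where I would point to the literature rather than reprove anything.
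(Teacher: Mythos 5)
Your setup---applying Theorem \ref{constructingtheaction} with $G=\mathbbm{Z}$, $Q=\mathbbm{Z}/m\mathbbm{Z}$, $K=m\mathbbm{Z}$, and choosing $c$ with $c|_{K}=1$ using $H^{2}(\mathbbm{Z},\text{U}(1))=H^{3}(\mathbbm{Z},\text{U}(1))=0$---is exactly right and matches the paper. But the final step contains a genuine error: the claim that $A_{m\theta}\cong A_{\theta}$ for every nonzero integer $m$ is false for $|m|\ge 2$. Indeed, the very classification you cite (Rieffel, Pimsner--Voiculescu) says $A_{\theta_{1}}\cong A_{\theta_{2}}$ if and only if $\theta_{1}\equiv\pm\theta_{2}\pmod{\mathbbm{Z}}$; applied to $\theta_{1}=m\theta$, $\theta_{2}=\theta$ this would require $(m\mp 1)\theta\in\mathbbm{Z}$, which is impossible for irrational $\theta$ unless $m=\pm 1$. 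Your $K_{0}$ argument fails because the unital classification uses the ordered $K_{0}$ group \emph{together with the class of the order unit} $[1]$: an order isomorphism between dense subgroups of $\mathbbm{R}$ is multiplication by a positive real, and rescaling $\mathbbm{Z}+m\theta\mathbbm{Z}$ to try to match $\mathbbm{Z}+\theta\mathbbm{Z}$ moves $1$ off the order unit (and does not even produce the right group: $\frac{1}{m}(\mathbbm{Z}+m\theta\mathbbm{Z})=\frac{1}{m}\mathbbm{Z}+\theta\mathbbm{Z}$).

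The fix is small and is what the paper does: do not let the generator of $\mathbbm{Z}$ rotate by $\theta$; let it rotate by $\theta/m$, i.e.\ take $\pi$ generated by $f(z)\mapsto f(e^{2\pi i\theta/m}z)$. Then the kernel $m\mathbbm{Z}$ acts with its generator rotating by angle $\theta$, so $C(S^{1})\rtimes_{\pi}m\mathbbm{Z}\cong A_{\theta}$ on the nose, and no classification theorem is needed. (A minor additional point: Lemma \ref{trivialize3cocycle} is stated for finite groups, so you cannot literally invoke its kernel-trivialization clause for $G=\mathbbm{Z}$; your parenthetical direct argument---correct $c$ by $d\tilde{\beta}$ for an extension $\tilde{\beta}$ of a $1$-cochain trivializing $c|_{m\mathbbm{Z}}$---is the right way to get $c|_{K}=1$ here, and is what the paper does in Section \ref{ActionsonS}.)
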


\begin{proof}
Consider the $\mathbbm{Z}$ action of on $C(S^{1})$ which rotates by the angle $e^{2 \pi i \frac{\theta}{m}}$. Then we can apply Theorem \ref{constructingtheaction} to $m\mathbbm{Z}$ to obtain an $\omega$-anomalous action on $C(S^{1})\rtimes m\mathbbm{Z}\cong A_{\theta}$.
\end{proof}

\subsection{A cohomology lemma.}\label{cohomologylemma}

The previous section shows we can always use free groups to trivialize cohomology, which allows us to apply Theorem $\ref{constructingtheaction}$ to build anomalous actions of finite groups on C*-algebras. However, to build examples related to geometry and topology, we need a finite group $G$ to surject onto $Q$. Furthermore, it is crucial for the applications in this paper the twisted crossed product we act on is actually untwisted (this was automatic for free groups, which have trivial $H^{2}$ and $H^{3}$. This means we need to find specific trivializations of the pullback which restrict to the trivial 2-cohomology class on the kernel. To this end, we have the following lemma, which should be compared to \cite[Lemma 7.1.12]{Jo1980}:

\begin{lem}\label{trivialize3cocycle} Let $Q$ be a finite group and $[\omega_{0}]\in H^{3}(Q, \text{U}(1))$. Then there exists a finite group $G$, a surjective homomorphism $\rho: G\rightarrow Q$, a normalized unitary 2-cochain $c\in C^{2}(G, \text{U}(1))$, and a normalized unitary 3-cocycle representative $\omega\in [\omega_{0}]$ such that $dc =\rho^{*}(\omega)$ and $c|_{Ker(\rho)}=1$.

\end{lem}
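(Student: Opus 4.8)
The plan is to realize $[\omega_0]$ as coming from a free resolution via a finite quotient, using a standard fact: any $3$-cocycle on a finite group, when pulled back to a free group surjecting onto it, becomes a coboundary, and the trivializing cochain can be adjusted to restrict trivially on a finite-index subgroup by replacing the free group with a suitable finite quotient. Concretely, I would proceed as follows. First pick a surjection $\pi: F \to Q$ from a finitely generated free group $F$; since $H^k(F,\mathrm{U}(1))=0$ for $k\ge 2$, the pullback $\pi^*(\omega_0)$ is a coboundary, say $\pi^*(\omega_0) = d\tilde c$ for some normalized $\tilde c \in C^2(F,\mathrm{U}(1))$. The kernel $N = \ker(\pi)$ is free, so $[\tilde c|_N]\in H^2(N,\mathrm{U}(1))$; but we cannot in general kill this class on the nose over $F$ itself while keeping $dc = \pi^*(\omega)$ — we must pass to a finite quotient.

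The key construction: let $N' \trianglelefteq N$ be a normal subgroup of $N$ which is also normal in $F$, of finite index in $N$, chosen so that the restriction of $\tilde c|_N$ to $N'$ is a coboundary in $C^2(N',\mathrm{U}(1))$. Such an $N'$ exists because $\mathrm{U}(1)$ is divisible, hence injective, so $H^2(N,\mathrm{U}(1))$ is built from finitely many classes each of which can be trivialized after restriction to a finite-index characteristic subgroup; intersecting finitely many such subgroups and then taking the $F$-core (intersection of finitely many $F$-conjugates, still finite index since $N$ is finitely generated over $F$ with $Q$ finite) produces $N'$. Then set $G := F/N'$ and let $\rho: G \to Q$ be induced by $\pi$; this is surjective with kernel $K = N/N'$, which is \emph{finite} because $[N:N'] < \infty$ and $[F:N]=|Q|<\infty$, so $G$ is finite. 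The cochain $\tilde c$ does not descend to $G$ directly, but I would modify it: since $\tilde c|_{N'} = db$ for a normalized $b\in C^1(N',\mathrm{U}(1))$, extend/average $b$ appropriately and subtract $db$ to arrange that the modified cochain is trivial on $N'$; the genuinely delicate point is that one needs the trivialization to be compatible enough with the $F$-action and with descent to make $c$ well-defined on $G$ and to keep $c|_K = 1$.

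The cleanest route around that descent subtlety is to argue at the level of the LHS (bar) resolution and use the connecting homomorphism explicitly, as the statement hints (``We provide an explicit model for these in the proof of Lemma \ref{trivialize3cocycle}''). Namely: from the short exact sequence $1 \to K \to G \to Q \to 1$ with $K$ acting trivially on $\mathrm{U}(1)$, one gets an inflation-restriction / Lyndon–Hochschild–Serre setup; a normalized $c \in C^2(G,\mathrm{U}(1))$ with $dc = \rho^*(\omega)$ and $c|_K$ a \emph{2-cocycle} on $K$ exists for \emph{any} choice of $\omega$ representing $[\omega_0]$, and the class $[c|_K] \in H^2(K,\mathrm{U}(1))$ depends only on $[\omega_0]$ and the extension; it is (up to sign) the image of $[\omega_0]$ under the transgression-type connecting map $H^3(Q,\mathrm{U}(1)) \to H^2(K,\mathrm{U}(1))$ associated to the extension. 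So the real task is to choose the extension $G$ so that this connecting map sends $[\omega_0]$ to $0$. Take $G$ to be (the relevant finite quotient of) the pullback of a free resolution: since $[\omega_0]$ lifts to a class that dies in the free group $F$, choosing $G = F/N'$ as above makes the connecting map factor through $H^2(N/N',\mathrm{U}(1)) \leftarrow$ (image of something from $F$), which is zero by construction. Having arranged $[c|_K] = 0$, replace $c$ by $c \cdot d\mu$ for a normalized $\mu \in C^1(G,\mathrm{U}(1))$ with $d\mu|_K = (c|_K)^{-1}$ — such $\mu$ exists since $c|_K$ is a coboundary on $K$ and normalized $1$-cochains on $K$ extend (by zero, or by any set-theoretic extension, then re-normalize) to normalized $1$-cochains on $G$. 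This modification preserves $dc = \rho^*(\omega)$ (it changes $\omega$ within its class, or not at all if we absorb $d(d\mu)=1$) and achieves $c|_K = 1$.

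I expect the main obstacle to be the bookkeeping that simultaneously keeps \emph{all} of: (i) $\rho$ surjective with $G$ \emph{finite}; (ii) $dc = \rho^*(\omega)$ with $\omega$ genuinely a normalized cocycle in the class $[\omega_0]$ (one may be forced to change the representative $\omega$, which the statement permits); and (iii) $c|_K \equiv 1$ \emph{exactly}, not just cohomologically — this last step forces the careful use of normalized cochains and an explicit extension of the trivializing $1$-cochain from $K$ to $G$. A secondary technical point is verifying that the relevant subgroup $N'$ of $N$ can be taken normal in $F$ and of finite index; this uses that $N$ is finitely generated (Nielsen–Schreier, finite index in finitely generated $F$) together with divisibility/injectivity of $\mathrm{U}(1)$ to trivialize the finitely many generating classes of $H^2(N,\mathrm{U}(1))$ after passing to a characteristic finite-index subgroup, then taking the $F$-core.
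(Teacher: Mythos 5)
Your closing step is right and matches the paper: once one has a finite extension $1\to K\to G\to Q\to 1$ and a normalized $c$ with $dc=\rho^{*}(\omega)$ such that $c|_{K}$ is a coboundary on $K$, one extends the trivializing $1$-cochain from $K$ to $G$ arbitrarily and divides off its coboundary, which preserves $dc=\rho^{*}(\omega)$ and forces $c|_{K}=1$ exactly. The genuine gap is everything before that, namely the construction of the finite group $G$. Your $G=F/N'$ proposal does not work as stated, for three reasons. First, $N=\ker(F\to Q)$ is free, so $H^{2}(N,\mathrm{U}(1))=0$ and the defining condition you impose on $N'$ (that $\tilde c|_{N'}$ be a coboundary) is automatic and carries no information; the divisibility-of-$\mathrm{U}(1)$ argument you invoke is solving a problem that is already vacuous. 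Second, the actual obstacle to descent is untouched: a $2$-cochain on $F$ descends to $F/N'$ only if it is $N'$-bi-invariant as a function on $F\times F$, which is far stronger than vanishing (even identically) on $N'$, and modifying $\tilde c$ by $d\mu$ with $\mu\in C^{1}(F,\mathrm{U}(1))$ gives no mechanism for achieving this. Third, and most importantly, you never establish that $\rho^{*}[\omega_{0}]=0$ in $H^{3}(F/N',\mathrm{U}(1))$ — the fact that $[\omega_{0}]$ dies in $H^{3}(F,\mathrm{U}(1))$ says nothing about a finite quotient (for $N'=N$ one gets $G=Q$ and the class certainly does not die), so the phrase ``which is zero by construction'' in your transgression paragraph is unsupported. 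Without that vanishing there is no $c$ with $dc=\rho^{*}(\omega)$ at all, let alone one controlling $[c|_{K}]$. (A smaller imprecision: $[c|_{K}]$ is not determined by $[\omega_{0}]$ and the extension alone; it is only well defined modulo the image of restriction $H^{2}(G,\mathrm{U}(1))\to H^{2}(K,\mathrm{U}(1))$, since $c$ may be multiplied by any $2$-cocycle of $G$.)

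For comparison, the paper constructs $G$ by dimension shifting rather than via free groups. With $n=|Q|$ it takes the coinduced module $M=\prod_{g\in Q}\mathbbm{Z}/n\mathbbm{Z}$, sets $A=M/(\mathbbm{Z}/n\mathbbm{Z})$ (diagonal copy), and uses Shapiro's lemma to see that the connecting map $\delta:H^{2}(Q,A)\to H^{3}(Q,\mathbbm{Z}/n\mathbbm{Z})$ is an isomorphism, together with surjectivity of $H^{3}(Q,\mathbbm{Z}/n\mathbbm{Z})\to H^{3}(Q,\mathrm{U}(1))$, to write $\omega=i_{*}\delta(\alpha)$ for some $\alpha\in Z^{2}(Q,A)$. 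The group is then the explicit finite extension $G=A\rtimes_{\alpha}Q$, on which $\rho^{*}(\alpha)$ is visibly a coboundary ($dc=\rho^{*}(\alpha)$ with $c(a,q)=a$), and chasing this through the connecting homomorphism produces an explicit $c_{0}$ with $dc_{0}=\rho^{*}(\omega)$ whose restriction to $A$ is the \emph{symmetric} cocycle $\beta$ classifying $M$ as an extension of $A$; symmetry plus divisibility of $\mathrm{U}(1)$ (i.e.\ $\mathrm{Ext}^{1}_{\mathbbm{Z}}(A,\mathrm{U}(1))=0$) then kills $[c_{0}|_{A}]$, which is exactly the input your final step needs. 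If you want to salvage your outline, this is the missing engine: you must exhibit a concrete finite extension on which the pullback of $\omega$ is trivialized by a cochain whose restriction to the kernel you can actually compute.
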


\begin{proof}
We begin by noting that that our proof uses standard ``dimension shifting" arguments in cohomology, and thus closely parallels the construction in the proof of \cite[Lemma 7.1.2]{Jo1980}. Ultimately we use the same construction of $G$, but we require a more thorough examination to find an appropriate trivialization.

To begin, we need to pick an representative of $[\omega_{0}]$. First let $n=|Q|$, and consider $\mathbbm{Z}/n\mathbbm{Z}$ as a trivial $Q$ module, and let $$M=\text{Hom}_{\mathbbm{Z}}(\mathbbm{Z}[Q], \mathbbm{Z}/n\mathbbm{Z})\cong \prod_{g\in Q} \mathbbm{Z}/n\mathbbm{Z}$$ be the coinduced module, where the action on the latter permutes the factors via left multiplication by elements in $Q$. Then the diagonal inclusion $z\rightarrow (z, \dots, z)$ is an inclusion of $Q$ modules, and thus we have a short exact sequence of $Q$-modules

$$0\rightarrow \mathbbm{Z}/n\mathbbm{Z}\rightarrow M\rightarrow A \rightarrow 0,$$

\noindent with $A:= M / (\mathbbm{Z}/n\mathbbm{Z})$. Then since $M$ is cohomologicaly trivial by Shapiro's Lemma \cite[Chapter III, Proposition 6.2]{MR672956}, the connecting homomorphism $\delta: H^{2}(Q, A)\rightarrow H^{3}(Q, \mathbbm{Z}/n\mathbbm{Z})$ is an isomorphism. In this argument, we require a more detailed description of the connecting homomorphism at the level of coboundaries. It can be defined as follows:

Pick a set-theoretical (unital) lift $A \rightarrow M$, $a\mapsto \hat{a}$. Then for any 2-cocyle $\alpha\in Z^{2}(Q, A)$, we obtain a 2-cochain $\hat{\alpha}\in C^{2}(Q, M)$. Taking the coboundary map $d(\hat{\alpha})$, we see that the values of the function in fact live in the subgroup $\mathbbm{Z}/n\mathbbm{Z}\le M$, since applying the quotient map $M\rightarrow A$ to $\hat{\alpha}$ is a 2-cocycle by construction. Thus we can define the connecting map at the level of cocycles by $\delta(\alpha):=d(\hat{\alpha})\in Z^{3}(Q, \mathbbm{Z}/n\mathbbm{Z})$. This is a 3-coboundary as an M-valued cochain but \textit{not} as a $\mathbbm{Z}/n\mathbbm{Z}$-valued cochain. In fact, since this is induces isomorphism in cohomology, every $\mathbbm{Z}/n\mathbbm{Z}$ valued cochain is cohomologous to one of this form. 

It is well known that the natural inclusion $i:\mathbbm{Z}/n\mathbbm{Z}\hookrightarrow \text{U}(1)$ which embeds the cyclic group as roots of unity induces a surjection $i_{*}: H^{3}(Q, \mathbbm{Z}/n\mathbbm{Z})\rightarrow H^{3}(Q, \text{U}(1))$. This follows directly from the long exact sequence induced from the short exact sequence $$0\rightarrow \mathbbm{Z}/n\mathbbm{Z}\rightarrow \text{U}(1)\rightarrow \text{U}(1)\rightarrow 0,$$

\noindent as the induced map $H^{3}(Q, \text{U}(1))\rightarrow H^{3}(Q, \text{U}(1))$ raises a $\text{U}(1)$ valued function to the $n^{th}$ power. This is the zero map on cohomology \cite[Chapter III, Proposition 10.1.]{MR672956}. Thus $H^{3}(Q, \mathbbm{Z}/n\mathbbm{Z})\rightarrow H^{3}(Q, \text{U}(1))$ is surjective. Therefore we can pick a representative of our cohomology class $\omega=i_{*}(\delta(\alpha))$ for some (normalized) cocycle $\alpha\in Z^{2}(Q, A)$.

Define the group $G:=A\rtimes_{\alpha}Q$ as the twisted crossed product of $Q$ with $A$. Explicitly, elements are pairs $(a,q)$ with product

$$(a,q)\cdot (a^{\prime},q^{\prime})=(a+q(a^{\prime})+\alpha(q,q^{\prime}), qq^{\prime}).$$

\noindent Let $\rho:G\rightarrow Q$ be the surjective homomorphism which projects onto the $Q$ factor.

Since $M$ is an abelian group there exists a normalized 2-cocycle $\beta\in H^{2}(A, \mathbbm{Z}/n\mathbbm{Z})$ (where the coefficients have a trivial $A$ modules structure) such that 

$$M\cong \mathbbm{Z}/n\mathbbm{Z}\times_{\beta} A,$$

\noindent where the latter group is defined similarly to the case above, with group operation defined on pairs $(z,a)+(z^{\prime}, a^{\prime}):=(z+z^{\prime}+ \beta(a,a^{\prime}), a+a^{\prime})$. Since $M$ is abelian, $\beta$ is symmetric, i.e. $\beta(a,a^{\prime})=\beta(a^{\prime},a)$. Using such an isomorphism, we can thus choose the lift $A\mapsto M$ via $\hat{a}=(0,a)$.

Then the projection onto $q$ gives a short exact sequence $$0\rightarrow A\rightarrow G\rightarrow Q\rightarrow 0.$$

Our goal is to find an explicit trivialization of $\rho^{*}(\omega)\in Z^{2}(G, \mathbbm{Z}/n\mathbbm{Z})$, where $\rho^{*}$ denotes the pullback under $\rho$ of cochains (i.e. the inflation homomorphism). We will then compute the corresponding 2-cocycle obtained upon restriction to $A\le G$.

Note that the function $c(a,q)=a$ in $C^{1}(G, A)$ satisfies 

$$dc((a, q), (a^{\prime},q^{\prime}))=-q(a^{\prime})+(a+q(a^{\prime})+\alpha(q,q^{\prime}))-a=\alpha(q,q^{\prime})=\rho^{*}(\alpha)((a,q),(a^{\prime},q^{\prime})).$$

Thus $\rho^{*}(\alpha)\in Z^{2}(G, A)$ is a coboundary. Naturality of the connecting homomorphisms on cohomology immediately gives us that $\rho^{*}(\omega)$ is trivial, but that is not sufficient. We need to examine a specific trivialization.

Consider the lifted function $\hat{c}(a,q)=(0,a)\in M=\mathbbm{Z}/n\mathbbm{Z}\times_{\beta} A$. Then $d\hat{c}=(t, \rho^{*}(\alpha))$ for some function $t:G\times G\rightarrow \mathbbm{Z}/n\mathbbm{Z}$. Therefore, if we can consider the function 

$$c_{0}:=-d\hat{c}+\widehat{\rho^{*}(\alpha)}\in C^{2}(G, \mathbbm{Z}/n\mathbbm{Z}),$$

we see

$$dc_{0}=d(-d\hat{c})+d(\widehat{\rho^{*}(\alpha)})=\delta(\rho^{*}(\alpha))=\rho^{*}(\delta(\alpha))=\rho^{*}(\omega),$$

\noindent where the second to last equality follows from commutativity of the diagram

\[\begin{tikzcd}
    Z^{2}(G, A) \arrow{r}{\delta} & Z^{3}(G, \mathbbm{Z}/n\mathbbm{Z}) \\
    Z^{2}(Q, A) \arrow{r}{\delta} \arrow{u}{\rho^{*}} & Z^{3}(Q, \mathbbm{Z}/n\mathbbm{Z})  \arrow{u}{\rho^{*}}
 \end{tikzcd}\]

\noindent with $\delta$ defined explicitly as above with respect to our fixed uniform lift $A\rightarrow M$, $a\mapsto (0,a)$. Thus $dc_{0}=\rho^{*}(\omega)$ in $Z^{3}(G, \mathbbm{Z}/n\mathbbm{Z})$, and thus embedding everything into roots of unity we see that $c_{0}$ also gives an explicit trivialization of $\rho^{*}(\omega)$ in $Z^{3}(Q, \text{U}(1))$.

All that remains is to compute $c_{0}|_{A}$, which is a $\mathbbm{Z}/n\mathbb{Z}$-valued 2-cocycle since $\rho^{*}(\omega)|_{A}=0$ (since $\omega$ was assumed to be normalized and $A$ is the kernel of the projection $G\rightarrow Q$).

We see 

\begin{align*}
c_{0}((a,0), (a^{\prime},0))&= -\left( -(0,a^{\prime})+(0, a+a^{\prime})-(0, a) \right)\\
&= (\beta(a^{\prime},a),a^{\prime}+a)+(-\beta(a+a^{\prime},-a-a^{\prime}), -a-a^{\prime})\\
&=(\beta(a^{\prime},a), 0).\\
\end{align*}

Thus $c_{0}|_{A}=\beta \in Z^{2}(A, \mathbbm{Z}/n\mathbb{Z})$. This cocycle is non-trivial in general (since the extension $M$ of $A$ by $\mathbbm{Z}/n\mathbbm{Z}$ is not split in general). However, since it is a symmetric cocycle, the inclusion of $c_{0}$ into $Z^{2}(A, \text{U}(1))$ is cohomologically trivial. This is most easily seen by noting that interpreting $c_{0}\in Z^{2}(A, \text{U}(1))$ gives an abelian extension, $$0\rightarrow \text{U}(1)\rightarrow M^{\prime}\rightarrow A,$$

\noindent which corresponds to an element in $\text{Ext}^{1}_{\mathbbm{Z}}(A, \text{U}(1))$, and this extension splits if and only if and only if the cocycle is trivial. But since $\text{U}(1)$ is divisible (as an abelian group), it is injective as a $\mathbbm{Z}$-module, and thus $\text{Ext}^{1}_{\mathbbm{Z}}(A, \text{U}(1))=0$. Therefore the above extension is split, which implies $c_{0}|_{A}\in Z^{2}(A, \text{U}(1))$ is a coboundary. In particular, there exists a function $f\in C^{1}(A, \text{U}(1))$ such that $df=c_{0}|_{A}$ (note that $f$ \textit{cannot} in general be chosen to have coefficients in $\mathbbm{Z}/n\mathbbm{Z}$!). Let $\tilde{f}$ be an arbitrary extension of $f$ from $A$ to all of $G$, so that $f\in C^{1}(G, \text{U}(1))$. 

Then the cochain $c:= c_{0} d\tilde{f}^{-1}\in C^{2}(G, \text{U}(1))$ (we have switched to multiplicative notation since our coefficients are $\text{U}(1)$) satisfies $dc=dc_{0}=\rho^{*}(\omega)$, and $c|_{A}=1$ as desired.

\end{proof}

We note that a version of the above lemma (that does not require the additional trivialization on the kernel) is also used in \cite{EG2018} to realize all twisted doubles as representation categories of a completely rational conformal field theory.

What can we say about the groups $G$ we construct in the above theorem? If $Q$ has order $n$, and we consider the trivial anomaly in $H^{3}$, then the above construction outputs $G=(\mathbbm{Z}/n\mathbbm{Z}\wr Q)/\mathbb{Z}/n\mathbbm{Z}$, where $\mathbb{Z}/n\mathbbm{Z}$ embeds as the diagonal normal subgroup. Given $\omega\in Z^{3}(Q, \text{U}(1))$, the construction above demonstrates how to find an appropriate element $ \alpha\in Z^{2} ( Q, (\prod_{Q} \mathbbm{Z}/n\mathbbm{Z})/\mathbbm{Z}/n\mathbbm{Z}\ )$, so that $G$ is then a ``twist by $\alpha$ of $( \mathbbm{Z}/n\mathbbm{Z}\wr Q)/(\mathbb{Z}/n\mathbbm{Z})$. In general this group will be quite large.

In the case of $Q=\mathbbm{Z}/2\mathbbm{Z}$ we can be very explicit about $G$. Note that $\mathbbm{Z}/2\mathbbm{Z}\times \mathbbm{Z}/2\mathbbm{Z}/\mathbbm{Z}/2\mathbbm{Z}\cong \mathbbm{Z}/2\mathbbm{Z}$, and the $\mathbbm{Z}/2\mathbbm{Z}$ action is trivial, thus $G=\mathbbm{Z}/2\mathbbm{Z}\times_{\alpha} \mathbbm{Z}/2\mathbbm{Z}$ for $\alpha\in H^{2}(\mathbbm{Z}/2\mathbbm{Z},\mathbbm{Z}/2\mathbbm{Z})\cong \mathbbm{Z}/2\mathbbm{Z}$. Thus for the non-trivial $\omega\in H^{3}(\mathbbm{Z}/2\mathbbm{Z}, \text{U}(1))$, we see that our construction produces the group $\mathbbm{Z}/4\mathbbm{Z}$, which is an extension of $\mathbbm{Z}/2\mathbbm{Z}$ by $\mathbbm{Z}/2\mathbbm{Z}$

$$0\rightarrow \mathbbm{Z}/2\mathbbm{Z}\rightarrow \mathbbm{Z}/4\mathbbm{Z}\rightarrow \mathbbm{Z}/2\mathbbm{Z}\rightarrow 0$$

\noindent In particular, the canonical quotient $\rho: \mathbbm{Z}/4\mathbbm{Z}\rightarrow \mathbbm{Z}/2\mathbbm{Z}$ satisfies $\rho^{*}(\omega)$ is trivial, and further we can choose a trivialization that restricts trivially to $\text{Ker}(\rho)=\mathbbm{Z}/2\mathbbm{Z}$. Thus to build an anomalous $\mathbbm{Z}/2\mathbbm{Z}$ action on a C*-algebra, we can look for $\mathbbm{Z}/4\mathbbm{Z}$ actions and apply the machinery developed above.

\section{Anomalies in topology.}\label{anomaliesintopology}

We will find some anomalous actions of finite group on (algebras Morita equivalent to) commutative C*-algebra on compact connected Hausdorff spaces. As mentioned in the introduction, finding anomalous actions on disconnected spaces in general is much easier, e.g. on finite sets. Let $\mathcal{K}$ denote the C*-algebra of compact operators on a separable infinite dimensional Hilbert space. We will use the results from above to build anomalous actions on stabilizations of commutative C*-algebras $C(X)\otimes \mathcal{K}$ for compact connected Hausdorff spaces $X$. In fact, we will build actions on C*-algebras Morita equivalent to $C(X)$, realized as the endomorphism algebras of finite dimensional locally trivial Hilbert modules over $X$. However, as these are all stably isomorphic to $C(X)$, we find it more convenient to use the language of stabilized C*-algebras.

\begin{thm}\label{classofspacesthm}
Let $\mathcal{C}$ be a class of compact, connected Hausdorff spaces such that every finite group admits a free action on some space $X\in \mathcal{C}$. Let $\mathcal{C}/\text{Ab}$ be the class of spaces obtained as quotients of members of $X$ by free actions of abelian groups. Then for any finite group $Q$ and any anomaly $\omega\in Z^{3}(Q, \text{U}(1))$, there exists a space $X\in \mathcal{C}/\text{Ab}$ and an $\omega$-anomalous action of $Q$ on $C(X)\otimes \mathcal{K}$
\end{thm}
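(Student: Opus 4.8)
The strategy is to combine Lemma~\ref{trivialize3cocycle} with Theorem~\ref{constructingtheaction}, using an explicit geometric realization of the crossed product as a stabilized commutative C*-algebra. First I would apply Lemma~\ref{trivialize3cocycle} to the finite group $Q$ and the class $[\omega]\in H^3(Q,\text{U}(1))$: this produces a finite group $G$, a surjection $\rho:G\to Q$ with kernel $K$, a normalized $c\in C^2(G,\text{U}(1))$ with $dc=\rho^*(\omega)$, and crucially $c|_K=1$. The key structural feature to extract from the proof of that lemma is that $K = A = M/(\mathbbm Z/n\mathbbm Z)$ is \emph{abelian} (it is a quotient of the coinduced module $M$). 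This abelianness of $K$ is precisely what lets us stay within the class $\mathcal C/\text{Ab}$.

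Next I would produce the action $\pi:G\to \text{Aut}(B)$ with $B=C(X)$ for a suitable $X\in\mathcal C$. By hypothesis, the finite group $G$ admits a free action on some $X\in\mathcal C$; let $G$ act on $B=C(X)$ by the induced (free) action. Now apply Theorem~\ref{constructingtheaction}: since $c|_K = 1$, the twisted crossed product is the \emph{untwisted} crossed product $C(X)\rtimes K$, and we obtain an $\omega$-anomalous action of $Q$ on $C(X)\rtimes K$. Because $K\le G$ acts freely on $X$ (restriction of a free action is free) and $K$ is a finite abelian group, the quotient $X/K$ is a space lying in $\mathcal C/\text{Ab}$, and the crossed product of a free action of a finite group is Morita equivalent to the quotient: $C(X)\rtimes K \sim_{\text{Morita}} C(X/K)$. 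Concretely $C(X)\rtimes K \cong \Gamma$-sections of a bundle of matrix algebras over $X/K$ (the Green--Julg / imprimitivity picture for free proper actions), hence $C(X)\rtimes K$ is stably isomorphic to $C(X/K)$, i.e. $(C(X)\rtimes K)\otimes\mathcal K \cong C(X/K)\otimes\mathcal K$.

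Finally, transport the anomalous action along this stable isomorphism. Since an isomorphism of C*-algebras induces a monoidal equivalence of the 2-groups $\underline{\text{Aut}}(-)$ (noted right after Definition~\ref{anomalousaction}), and since tensoring with $\mathcal K$ (with the trivial $Q$-action on the $\mathcal K$ factor) turns the $\omega$-anomalous action on $C(X)\rtimes K$ into an $\omega$-anomalous action on $(C(X)\rtimes K)\otimes\mathcal K$, we conclude there is an $\omega$-anomalous action of $Q$ on $C(X/K)\otimes\mathcal K$ with $X/K\in\mathcal C/\text{Ab}$, as required. The main obstacle is the Morita-equivalence step: one must check carefully that $C(X)\rtimes K$ for a \emph{free} action of a finite group $K$ is indeed stably isomorphic to $C(X/K)$ (equivalently, that the associated bundle of matrix algebras over $X/K$ — with fibers $\mathbb C[K]$ — is a continuous-trace algebra with trivial Dixmier--Douady class, which holds because $X\to X/K$ is a principal $K$-bundle and the regular representation bundle is globally an endomorphism bundle), and that the anomalous $Q$-action is compatible with this identification — but since the $Q$-action was built intrinsically on the crossed product, this compatibility is automatic once the isomorphism is fixed.
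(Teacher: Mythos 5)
Your proposal is correct and follows essentially the same route as the paper's proof: apply Lemma~\ref{trivialize3cocycle} to get $\rho:G\to Q$ with abelian kernel $K$ and a trivialization $c$ with $c|_{K}=1$, feed a free $G$-action on some $X\in\mathcal{C}$ into Theorem~\ref{constructingtheaction} to get an $\omega$-anomalous $Q$-action on the untwisted crossed product $C(X)\rtimes K$, identify this algebra (via freeness of the $K$-action and fullness of projections over the connected space $X/K$) as stably isomorphic to $C(X/K)$, and transport the action through the stabilization. The paper phrases the Morita step via $C(X)\rtimes K\cong\operatorname{End}(H)$ for a locally trivial finite-dimensional Hilbert bundle $H$ over $X/K$ rather than your Dixmier--Douady remark, but the content is the same.
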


\begin{proof}
Let $Q$ be a group, $\rho:G\rightarrow Q$ a surjective homomorphism with abelian kernel $K$, and $c\in C^{2}(Q, \text{U}(1))$ a (normalized) $\text{U}(1)$-valued 2-cochain satisfying $dc=\rho^{*}(\omega)$ and $c|_{Ker(\rho)}=1$. These exist by Lemma \ref{trivialize3cocycle}.

Let $X\in \mathcal{C}$ be a space with a free $G$ action, and consider the crossed product $ C(X)\rtimes K$. Then by Theorem \ref{constructingtheaction} we obtain an $\omega$-anomalous action of $G$ on $C(X)\rtimes K$. But since the action by $K\le Q$ is free, by \cite[Theorem 14]{MR453917}, $C(X)\rtimes K=\text{End}(H)$ where $H\in \text{Hilb}(X/K)$ is a finite dimensional locally trivial Hilbert bundle over $X/K$ (local triviality follows from freeness of the action and the fact that $K$ is finite). Since $X/K$ is connected, every projection $p\in M_{n}(C(X/K))$ is a full projection \cite[6.3.6]{MR1656031}. Thus $\text{End}_{X/K}(H)\cong pM_{n}(C(X/K))p $ is strongly Morita equivalent to $C(X/K)$, and we have an isomorphism $C(X/K)\otimes \mathcal{K}\cong (C(X)\rtimes K)\otimes \mathcal{K}$. Furthermore, we have a natural monoidal inclusion $\underline{\text{Aut}}(C(X/K))\rightarrow \underline{\text{Aut}}(C(X/K))\otimes \mathcal{K}$, where automorphisms act only on the first tensor category and unitaries in $C(X/K)$ naturally include (diagonally) into the multiplier algebra. Thus we obtain an $\omega$-anomalous action on $C(X/K)\otimes \mathcal{K}$. By definition, $X/K\in \mathcal{C}/\text{Ab}$.

\end{proof}

We can now use this theorem to obtain large classes of spaces on which every pointed fusion category acts. We point out two classes of examples which seemed interesting to us:

\subsection{Manifolds of with dimension $n\ge 2$ (Proof of Theorem \ref{actionsonmanifolds}). }\label{allmanifolds} 

Since the class of closed, connected $n$-manifolds is closed under taking quotients by free actions of finite abelian groups, it suffices to show every finite group admits a free action on an n-manifold for $n\ge 2$. Then an application of Theorem \ref{classofspacesthm} will show that every finite group admits an $\omega$-anomalous action on a closed connected $n$-manifold for every anomaly.

We first consider the case $n=2$. It was pointed out to us by Andr\'{e} Henriques that every finite group has a free action on a closed connected surface (this is a classical result well known to topologists). Recall that the surface $T_{g}$ of genus $g$ has fundamental group $\pi_{1}(T_{g})=\langle x_{1},y_{1}\dots, x_{g},y_{g}\ |\ [x_{1},y_{1}]\cdots [x_{g},y_{g}]=1\rangle$. In particular, if a finite group $G$ has $g$ generators, we have a surjection $\rho: \pi_{1}(T_{g})\rightarrow G$ that sends $x_{i}$ to the generators and $y_{i}$ to $1$. $\text{Ker}(\rho)\le \pi_{1}(T_{g})$ defines a covering $\tilde{T}_{g}$ of $T_{g}$ with covering group $\pi_{1}(T_{g})/\text{Ker}(\rho)\cong G$. Since $G$ is finite $\tilde{T}_{g}$ is a closed connected surface with free $G$-action.

For $n\ge 3$, we can just take a free action on a surface and take the product with a $n-2$ dimensional surface. Alternatively, pick a surjection from a free group $\mathbbm{F}_{m}\rightarrow G$ with kernel $K$, and consider the m-fold connected sum $(S^{1}\times S^{n-1})\#\dots \# (S^{1}\times S^{n-1})$, whose fundamental group is $\mathbbm{F}_{m}$ via van Kampen's theorem. Then from covering space theory, the universal cover has a free action of $\mathbbm{F}_{m}$ whose quotient by $K$ is a $\mathbbm{F}_{m}/K\cong G$-covering. In particular, since $G$ is finite this is again a closed connected n-manifold with a free $G$ action.

\subsection{Manifolds with $H^{1}(X, \mathbbm{Z})=0$ and dimension $n\ge 4$. (Proof of Theorem \ref{actionsonmanifolds})}\label{manifoldspi1finite}
We use the same type of logic as the previous example to obtain $\omega$-anomalous actions on $C(M)\otimes\mathcal{K}$ with $H^{1}(M,\mathbbm{Z})=0$, when $n\ge 4$. First we claim that every finite group $G$ admits a free action on a simply connected $n$-manifold for $n\ge 4$. It is well-known that every finitely generated group can be realized as the fundamental group of some closed connected $n$-manifold for $n\ge 4$. For example\footnote{while this is elementary algebraic topology, the description here follows an answer to a MathOverflow question \cite{15414}}, given a presentation of $G$ with $m$ generators and relations $r_{1},\dots r_{k}$, as in the previous construction, we can take the m-fold connected sum $X:=(S^{1}\times S^{n-1})\# \dots \# (S^{1}\times S^{n-1})$, which has fundamental group $\mathbbm{F}_{m}$. Identifying the generators of the fundamental group with generators in our presentation, we can consider the loop in $X$ corresponding to each word $r_{i}$. We can perform surgery on the tubular neighborhood of these loops, which are homeomorphic to $S^{1}\times D^{n-1}$, by replacing them with $D^{2}\times S^{n-2}$, which are simply connected (since $n\ge 4$). This results in a compact connected manifold $Y$. By van Kampen's theorem $\pi_{1}(Y)=G$. The universal cover $\tilde{Y}$ is then a compact connected $n$-manifold, with free $G$ action such that $\tilde{Y}/G=Y$. 

Now we claim that if $X$ is simply connected and $A$ is an abelian group that acts freely on $X$, then $X/A$ has the property that $H^{1}(X/A, \mathbbm{Z})=0$. Once we have this, the result follows from Theorem \ref{classofspacesthm}. To see the claim, note by the universal coefficient theorem, we have a short exact sequence

$$0\rightarrow \text{Ext}(H_{0}(X/A, \mathbbm{Z}), \mathbbm{Z})\rightarrow H^{1}(X/A, \mathbbm{Z})\rightarrow \text{Hom}(H_{1}(X/A, \mathbbm{Z}), \mathbbm{Z})\rightarrow 0.$$

\noindent The $\text{Ext}$-term vanishes since $H_{0}(X/A,\mathbbm{Z})=\mathbbm{Z}$ is free. Furthermore, since $H_{1}(X/A, \mathbbm{Z})=\pi_{1}(X/A)=A$ (since $A$ is abelian) is finite, the last term is also $0$. Thus $H^{1}(X, \mathbbm{Z})$ is $0$.

\subsection{Anomalous actions of cyclic groups on inductive limits of $S^{1}$.}\label{ActionsonS}. We briefly recall the $n=1$ case of Corollary \ref{freegroupcor}. Let $Q=\mathbbm{Z}/n\mathbbm{Z}$ and $G=\mathbbm{Z}$. Then $H^{3}(\mathbbm{Z}/n\mathbbm{Z}, \text{U}(1))\cong \mathbbm{Z}/n\mathbbm{Z}$ and $H^{3}(\mathbbm{Z}, \text{U}(1))$ is trivial. The natural quotient homomorphism $\rho_{n}:\mathbbm{Z}\rightarrow \mathbbm{Z}/n\mathbbm{Z}$ satisfies $[\rho^{*}(\omega)]=[1]$ for all $\omega\in Z^{3}(\mathbbm{Z}/n\mathbbm{Z}, \text{U}(1))$. In addition, $H^{2}(\mathbbm{Z}, \text{U}(1))$ is also trivial so we can choose a trivialization $c\in C^{2}(\mathbbm{Z}, \text{U}(1))$ with $dc=\rho^{*}(\omega)$ and $c|_{\text{Ker}(\rho)}=1$. 

Now, consider the trivial action of $\mathbbm{Z}$ on a point. Then since $\text{Ker}(\rho)=n\mathbbm{Z}\cong \mathbbm{Z}$, we have $C^{*}_{r}(\text{Ker}(\rho))\cong C^{*}_{r}(\mathbbm{Z})\cong C(S^{1})$. Then for all 3-cocycles $\omega\in Z^{3}(\mathbbm{Z}/n\mathbbm{Z}, U(1))$, Theorem \ref{constructingtheaction} gives us an $\omega$-anomalous action of $\mathbbm{Z}/n\mathbbm{Z}$ on $C(S^{1})$.

\bigskip

To make this more interesting, we will take inductive limits of anomalous circle actions. Let $p$ be a prime with $p=1\ \text{mod}\ n$. Then consider the inductive limit of abelian groups.

\[\begin{tikzcd}
    \mathbbm{Z} \arrow{r}{p} & \mathbbm{Z} \arrow{r}{p} & \mathbbm{Z} \arrow{r}{p} & \dots
 \end{tikzcd}\]
\

\noindent This can be identified with the group of p-adic fractions $$\displaystyle \lim_{i\rightarrow \infty}p^{-i}\mathbbm{Z}\cong \{\frac{n}{p^{j}}\ : m\in \mathbbm{Z}, j\in \mathbbm{N}\}$$ under addition, where the $i^{th}$ copy of $\mathbbm{Z}$ is identified with the cyclic subgroup $p^{-i}\mathbbm{Z}=\{\frac{m}{p^{i}}\ :\ n\in \mathbbm{Z}\}$, and the ``multiplication by p" maps in the diagram correspond to the natural inclusion $\{\frac{m}{p^{i}}\ :\ m\in \mathbbm{Z}\}\hookrightarrow \{\frac{m}{p^{i+1}}\ :\ m\in \mathbbm{Z}\}$ via $\frac{m}{p^{i}}= \frac{pm}{p^{i+1}}$.

Furthermore, note that since $p=1\ \text{mod}\ n$ the map multiplying by $p$ commutes with the quotient homomorphisms $\mathbbm{Z}\rightarrow \mathbbm{Z}/n\mathbbm{Z}$, and thus from the universal property of colimits, we obtain a homomorphism

$$\rho:\text{colim} p^{-i}\mathbbm{Z}\rightarrow \mathbbm{Z}.$$

Furthermore, the kernel of $\rho$ is given by the colimit

\[\begin{tikzcd}
    n\mathbbm{Z} \arrow{r}{p} & n\mathbbm{Z} \arrow{r}{p} & n\mathbbm{Z} \arrow{r}{p} & \dots
 \end{tikzcd}\]

\noindent But note this colimit is again isomorphic to the group of p-adic fraction $\text{colim} p^{-i} \mathbbm{Z}$. We claim that $H^{k}(\text{colim} p^{-i} \mathbbm{Z}, \text{U}(1))=0$ for $k\ge 2$.

Setting $G=\text{colim}\ p^{-i} \mathbbm{Z}$, recall that from the universal coefficients theorem that we have a short exact sequence

\[\begin{tikzcd}
    0 \arrow{r} & \text{Ext}_{\mathbbm{Z}}(H_{k-1}(G, \mathbbm{Z}), \text{U}(1)) \arrow{r} & H^{k}(G, \text{U}(1)) \arrow{r} & \text{Hom}_{\mathbbm{Z}}(H_{k}(G, \mathbbm{Z}), \text{U}(1)) \arrow{r} & 0.
 \end{tikzcd}\]

Since $\text{U}(1)$ is a divisible group, it is an injective $\mathbbm{Z}$ module, hence the $\text{Ext}$ term vanishes.
On the other hand, since homology commutes with directed colimits of groups, we have

$$H_{k}(G, \mathbbm{Z})=H_{k}(\text{colim}\ p^{-i} \mathbbm{Z}, \mathbbm{Z})=\text{colim}\ H_{k}(p^{-i} \mathbbm{Z}, \mathbbm{Z}).$$

\noindent But since $H_{k}(p^{-i} \mathbbm{Z}, \mathbbm{Z})=H_{k}( \mathbbm{Z}, \mathbbm{Z})=0$ for $k\ge 2$, we have $H_{k}(G, \mathbbm{Z})=0$ for $k\ge 2$. Thus $H^{k}(G, \text{U}(1))=0$ for $k\ge 2$.

In particular, this implies for any normalized $3$-cocyle $\omega\in Z^{3}(\mathbbm{Z}/n\mathbbm{Z}, \text{U}(1))$, we have $\rho^{*}(\omega)$ is cohomologically trivial. Furthermore, for any trivialization $c\in C^{2}(G, \text{U}(1))$, the 2-cocyle $c|_{Ker(\rho)}\in Z^{2}(Ker(\rho), \text{U}(1))$ is also cohomologically trivial (since $\text{Ker}(\rho)\cong G)$, thus we may modify $c$ by a 2-coboundary to obtain a trivialization $c^{\prime}$ of $\rho^{*}(\omega)$ with $c^{\prime}|_{\text{Ker}(\rho)}=1$.

Now, consider the action of $G$ on a point. Then since $\text{Ker}(\rho)\cong \text{colim}\ p^{-i} \mathbbm{Z}$, applying Theorem \ref{constructingtheaction} we obtain an $\omega$-anomalous action of $\mathbbm{Z}/n\mathbbm{Z}$ on the reduced group C*-algebra $C^{*}_{r}(\text{colim}\ p^{-i} \mathbbm{Z})$ for all $\omega$. This is a commutative unital C*-algebra, and thus is isomorphic to the algebra of continuous functions on its spectrum $C(X_{p})$. Its spectrum as a C*-algebra coincides with the Pontryagin dual $X_{p}=\widehat{\text{colim}\ p^{-i} \mathbbm{Z}}$. $X_{p}$ is a compact, connected Hausdorff space called a \textbf{p-adic solenoid}. Dualizing the colimit construction described above, it can be realized as a limit of circles, where the maps between circles wrap $p$ times. This space has several unusual properties, as it is not locally connected or path connected. It is thus particularly badly behaved from the point of view of algebraic topology. 

The above construction gives an $\omega$-anomalous action of $\mathbbm{Z}/n\mathbbm{Z}$ on the algebra of functions $C(X_{p})$ of the p-adic solenoid.

\bigskip

\subsection{
Anomalous actions of $\mathbbm{Z}/m\mathbbm{Z}\times \mathbbm{Z}/n\mathbbm{Z}$ on the 2-torus.} \label{actionon2torus}

We consider the case $G:=\mathbbm{Z}/m\mathbbm{Z}\times \mathbbm{Z}/n\mathbbm{Z}$. Elements are represented as pairs $(i,j)$ with $i\in \{0,\dots ,m-1\}$ and $j\in \{0,\dots , n-1\}$, with group operation $\cdot$ given by $(i_{1},i_{2})\cdot(j_{1},j_{2}):=([i_{1}+j_{1}]_{m}, [i_{2}+j_{2}]_{n})$

Then 

$$H^{3}(\mathbbm{Z}/m\mathbbm{Z}\times \mathbbm{Z}/n\mathbbm{Z},\  \text{U}(1)) \cong \mathbbm{Z}/m\mathbbm{Z}\times \mathbbm{Z}/(m,n)\mathbbm{Z}\times \mathbbm{Z}/n\mathbbm{Z},$$

\noindent where $(m,n)$ denotes the greatest common divisor of $m$ and $n$. From \cite{MR3228486}, we have explicit unitary representatives of these cocycles, given for $(a,b,c)\in \mathbbm{Z}/m\mathbbm{Z}\times \mathbbm{Z}/(m,n)\mathbbm{Z}\times \mathbbm{Z}/n\mathbbm{Z}$ by 

$$\omega_{a,b,c}(i,j,k):=\text{e}^{\frac{ 2\pi i}{m} a i_{1}\floor{\frac{j_{1}+k_{1}}{m}}} \text{e}^{\frac{2\pi i}{n}( b i_{2}\floor{\frac{j_{1}+k_{1}}{m}}+ci_{2}\floor{\frac{j_{2}+k_{2}}{n}} )},$$

\noindent where $i=(i_{1},i_{2}), j=(j_{1},j_{2}), k=(k_{1},k_{2})$. 

\medskip

\noindent \textbf{Warning}: in the above formula, we do not reduce the sums modulo $n$ (e.g. $i_{1}+j_1$ \text{mod} $n$). We use the notation $[i]_{n}$ to indicate reduction \text{mod} $n$ in this section of the paper.

\medskip

Our goal is to obtain an explicit $\omega$-anomalous action of $\mathbbm{Z}/m\mathbbm{Z}\times \mathbbm{Z}/n\mathbbm{Z}$ on the 2-torus $\mathbb{T}^{2}$ for \textit{any} 3-cocycle $\omega\in H^{3}(\mathbbm{Z}/m\mathbbm{Z}\times \mathbbm{Z}/n\mathbbm{Z}, \text{U}(1))$. Consider the $*$-algebra of Laurent polynomials in two commuting unitary variables $\mathbbm{C}[z,z^{-1},w, w^{-1}]$. Then there is a natural embedding $\mathbbm{C}[z,z^{-1},w, w^{-1}]\rightarrow C(\mathbb{T}^{2})$ whose image is dense.

We have a homomorphism $\alpha: \mathbbm{Z}/m\mathbbm{Z}\times \mathbbm{Z}/n\mathbbm{Z}\rightarrow \text{Aut}(\mathbbm{C}[z,z^{-1},w, w^{-1}])$ given by

$$\alpha_{(j,k)}(f)(z,w):=f(\text{e}^{\frac{2\pi i j}{m}}z,\text{e}^{\frac{2\pi i k}{n}}w). $$

\noindent Geometrically this simply rotates by $\frac{2\pi j}{m}$ clockwise on the first factor and $\frac{2\pi k}{n}$ on the second factor in the decomposition $\mathbb{T}^{2}=S^{1}\times S^{1}$. Clearly this extends to an automorphism of $C(\mathbb{T}^{2})$.

Now, to get an $\omega$-anomalous action of $\mathbbm{Z}/m\mathbbm{Z}\times \mathbbm{Z}/n\mathbbm{Z}$ associated to this homomorphism, we need to find a trivialization of $\omega$ \textit{as a cocycle in} $Z^{3}(G, C(\mathbb{T}^{2},\text{U}(1)))$. Using Definition \ref{Explicitdefn}, we need to find unitary functions

$$\mu_{i,j}(z,w)\in C(\mathbb{T}^{2}, \text{U}(1))$$

\noindent for $i=(i_1, i_2), j=(j_{1},j_{2})\in \mathbbm{Z}/m\mathbbm{Z}\times \mathbbm{Z}/n\mathbbm{Z}$, satisfying the equation 

\begin{equation}\label{cocycleequation}
\omega(i,j,k) \mu_{i\cdot j, k}(z,w)\ \mu_{i,j}(z,w)= \mu_{i,j\cdot k}(z,w)\  \alpha_{i}(\mu_{j,k})(z,w)
\end{equation}

\noindent for $i,j,k\in \mathbbm{Z}/m\mathbbm{Z}\times \mathbbm{Z}/n\mathbbm{Z} $.

Fix a $3$-cocycle $\omega_{a,b,c}$ as described above and consider the function

$$\mu_{i,j}(z,w):=(\text{e}^{\frac{2\pi i}{m}} z)^{a \floor{\frac{i_{1}+j_{1}}{m}}} (\text{e}^{\frac{2\pi i}{n}} w)^{( b \floor{\frac{i_{1}+j_{1}}{m}}+c\floor{\frac{i_{2}+j_{2}}{n}} )}.$$

\begin{prop}
$\mu_{i,j}(z,w)$ defined above satisfies the cocycle equation (\ref{cocycleequation}) for $\omega_{a,b,c}$.
\end{prop}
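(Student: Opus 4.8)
The plan is to verify the cocycle equation (\ref{cocycleequation}) by direct computation, exploiting the fact that the proposed trivialization $\mu_{i,j}$ is essentially obtained from $\omega_{a,b,c}$ by ``promoting'' the first slot argument to a function variable: comparing the formulas, $\omega_{a,b,c}(i,j,k)$ is the scalar one gets from $\mu_{j,k}$ by replacing the variable $(e^{2\pi i/m}z, e^{2\pi i/n}w)$ with the constant $(e^{2\pi i i_1/m}, e^{2\pi i i_2/n})$ coming from the group element $i$. This is exactly the Eilenberg--MacLane style dimension-shifting phenomenon, and it is no accident: the whole construction is designed so that $d_{\mathrm{grp}}\mu$ (with the $G$-action on $C(\mathbb{T}^2,\mathrm{U}(1))$ taken into account) reproduces the inflation of $\omega_{a,b,c}$ along the projection $G \to G$ together with the evaluation map.

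First I would reduce the problem by linearity/multiplicativity in the exponent data. Since $\mu_{i,j}$ is a product of a $z$-power and a $w$-power, and $\omega_{a,b,c}$ is a product of three exponential factors, I would split the verification into the contribution governed by $a$ (involving only $z$ and the first coordinates $i_1,j_1,k_1$ with floors $\lfloor \tfrac{\cdot}{m}\rfloor$), and the contribution governed by $b$ and $c$ (involving $w$ and mixing first and second coordinates). Each piece then reduces to a single scalar identity: taking logarithms, one needs an additive identity among the floor functions $\lfloor \tfrac{i_1+j_1}{m}\rfloor$ etc., together with the ``carry'' term produced when $\alpha_i$ acts on $\mu_{j,k}$, namely the extra factor $e^{2\pi i \cdot a\, i_1 \lfloor (j_1+k_1)/m\rfloor / m}$ (and analogously for $w$), which is precisely $\omega_{a,b,c}(i,j,k)$. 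So the key algebraic fact to establish is the ``cocycle identity for floors'':
\begin{equation*}
\Big\lfloor \tfrac{[i_1+j_1]_m + k_1}{m}\Big\rfloor + \Big\lfloor \tfrac{i_1+j_1}{m}\Big\rfloor \;=\; \Big\lfloor \tfrac{i_1 + [j_1+k_1]_m}{m}\Big\rfloor + \Big\lfloor \tfrac{j_1+k_1}{m}\Big\rfloor,
\end{equation*}
together with the bookkeeping that the difference between $\lfloor \tfrac{i_1+[j_1+k_1]_m}{m}\rfloor$ and $\lfloor \tfrac{i_1+j_1+k_1}{m}\rfloor$ accounts for $\lfloor \tfrac{j_1+k_1}{m}\rfloor$ appearing in the exponent of $w$ carried through by $\alpha_i$. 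This is a standard identity (it is essentially the statement that the Eilenberg--MacLane formula defines a $3$-cocycle), and the same identity with $n$ in place of $m$ handles the $c$-part while a mixed version handles the $b$-part.

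The plan is then to assemble these pieces: on the left-hand side of (\ref{cocycleequation}), collect the exponent of $z$ from $\mu_{i\cdot j,k}\mu_{i,j}$ (remembering that $(i\cdot j)_1 = [i_1+j_1]_m$, so the floors involve the reduced sum, hence the genuine need for the ``Warning'' about not reducing mod $n$ elsewhere), and compare with the exponent of $z$ on the right-hand side from $\mu_{i,j\cdot k}$ and from $\alpha_i(\mu_{j,k})$ — the latter contributes both an exponent-of-$z$ piece $\lfloor (j_1+k_1)/m\rfloor$ and, crucially, a scalar piece from the substitution $z \mapsto e^{2\pi i i_1/m} z$, which is where the $\omega_{a,b,c}(i,j,k)$ factor on the left is absorbed. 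Repeating verbatim for $w$ (now with both the $b$-mixing term and the $c$-term) finishes the proof.

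The main obstacle I anticipate is purely notational rather than conceptual: keeping the floors of \emph{unreduced} integer sums straight while the group elements are \emph{reduced} mod $m$ and $n$ — i.e. correctly distinguishing $\lfloor \tfrac{[i_1+j_1]_m + k_1}{m}\rfloor$ from $\lfloor \tfrac{i_1+j_1+k_1}{m}\rfloor$, and tracking how the action $\alpha_i$ (which multiplies the variable by a root of unity rather than reducing an exponent) interacts with these. Once the single floor-cocycle identity above is isolated and proved, the rest is a careful but routine matching of exponents, so I would present that identity as a preliminary lemma and then do the matching of the $a$-, $b$-, and $c$-graded parts in turn.
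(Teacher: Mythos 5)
Your proposal is correct and follows essentially the same route as the paper: a direct computation in which the scalar $\omega_{a,b,c}(i,j,k)$ is recognized as the factor produced when $\alpha_i$ substitutes $z\mapsto e^{2\pi i i_1/m}z$, $w\mapsto e^{2\pi i i_2/n}w$ into $\mu_{j,k}$, and the remaining exponent-matching reduces to the floor identity $\lfloor\tfrac{i+[j]_m}{m}\rfloor=\lfloor\tfrac{i+j}{m}\rfloor-\lfloor\tfrac{j}{m}\rfloor$ (your symmetric four-term identity is just two applications of this, both sides equalling $\lfloor\tfrac{i_1+j_1+k_1}{m}\rfloor$). The paper cites exactly this floor lemma and carries out the same bookkeeping for the $a$-, $b$-, and $c$-parts.
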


\begin{proof}
First note that we have the following formula, of which we make repeated use in the following computations \cite[Lemma 2.1]{MR3228486}:

\begin{equation}\label{floorlemma}
\floor{\frac{i+[j]_{m}}{m}}=\floor{\frac{i+j}{m}}-\floor{\frac{j}{m}}.
\end{equation}

\noindent We compute
\begin{align*}
&\mu_{i\cdot j, k}(z,w)\ \mu_{i,j}(z,w)\\
=&\left(\text{e}^{\frac{2\pi i}{m}} z\right)^{a (\floor{\frac{[i_{1}+j_{1}]_{n}+k_{1}}{m}}+ \floor{\frac{i_{1}+j_{1}}{m}})} \left(\text{e}^{\frac{2\pi i}{n}} w\right)^{ b(\floor{\frac{[i_{1}+j_{1}]_{m}+k_{1}}{m}} +\floor{\frac{i_{1}+j_{1}}{m}})+c(\floor{\frac{[i_{2}+j_{2}]_{n}+k_{2}}{n}} +\floor{\frac{i_{2}+j_{2}}{n}})}\\
=&\left(\text{e}^{\frac{2\pi i}{m}} z\right)^{a \floor{\frac{i_{1}+j_{1}+k_{1}}{m}}} \left(\text{e}^{\frac{2\pi i}{n}} w \right)^{ b\floor{\frac{i_{1}+j_{1}+k_{1}}{m}}+c\floor{\frac{i_{2}+j_{2}+k_{2}}{n}}}. 
\end{align*}

\noindent Now, we compute the right hand side of the cocycle equation. We have

$$\mu_{i,j\cdot k}(z,w)=\left(\text{e}^{\frac{2\pi i}{m}} z\right)^{a \floor{\frac{i_{1}+[j_{1}+k_{1}]_{m}}{m}}} \left(\text{e}^{\frac{2\pi i}{n}} w\right)^{b \floor{\frac{i_{1}+[j_{1}+k_{1}]_{m}}{m}}+c\floor{\frac{i_{2}+[j_{2}+k_{2}]_{n}}{n}} },$$

$$\alpha_{i}(\mu_{j,k})(z,w)= \left(\text{e}^{\frac{2\pi i(1+i_{1})}{m}} z \right)^{a \floor{\frac{j_{1}+k_{1}}{m}}} \left(\text{e}^{\frac{2\pi i(1+i_{2})}{n}} w\right)^{b \floor{\frac{j_{1}+k_{1}}{m}}+c\floor{\frac{j_{2}+k_{2}}{n}} }.$$

\noindent
By rearranging the exponents and applying equation \ref{floorlemma}, we get

\begin{align*}
&\mu_{i,j\cdot k}(z,w) \alpha_{i}(\mu_{j,k})(z,w)\\
=&\left(\text{e}^{\frac{ 2\pi i}{m} a i_{1}\floor{\frac{j_{1}+k_{1}}{m}}}\right) \left(\text{e}^{\frac{2\pi i}{n}( b i_{2}\floor{\frac{j_{1}+k_{1}}{m}}+ci_{2}\floor{\frac{j_{2}+k_{2}}{n}} )}\right) \left(\text{e}^{\frac{2\pi i}{m}} z\right)^{a \floor{\frac{i_{1}+j_{1}+k_{1}}{m}}} \left(\text{e}^{\frac{2\pi i}{n}} w\right)^{ b\floor{\frac{i_{1}+j_{1}+k_{1}}{m}}+c\floor{\frac{i_{2}+j_{2}+k_{2}}{n}}} \\
=&\omega(i,j,k) \mu_{i\cdot j, k}(z,w)\ \mu_{i,j}(z,w).\\
\end{align*}

\noindent as desired.

\end{proof}

\begin{rem}
We note that setting $n=1$, the action restricts to the first factor, and we obtain the $\omega$-anomalous action of $\mathbbm{Z}/m \mathbbm{Z}$ on $S^{1}$ for arbitrary $\omega\in H^{3}(G, \text{U}(1))$. It is not hard to show these agree with the actions constructed in the previous section.
\end{rem}

\subsection{A no-go theorem.}
Now that we have seen many examples, we include a proof of Theorem \ref{noactionsonmanifolds} from the introduction.

\begin{proof}[Proof of Theorem \ref{noactionsonmanifolds}]

It is easy to see from Gelfand duality that for any compact Hausdorff space $X$, $$\underline{\text{Aut}}(C(X))\cong \text{2-Gr}(\text{Homeo}(X), C(X, \text{U}(1)), 1),$$ with the obvious action of $\text{Homeo}(X)$ on $C(X, \text{U}(1))$, and trivial associator 3-cocycle. Thus an $\omega$-anomalous G-action on $C(X)$ can be built if and only if we can find a homomorphism $\rho:G\rightarrow \text{Homeo}(X)$ so that $i_{*}(\omega)$ is trivial in $H^{3}(G, C(X, \text{U}(1))$, where $i_{*}$ is the pushforward of the inclusion of coefficient modules $\text{U}(1)\hookrightarrow C(X, \text{U}(1))$. However, we will show that the hypothesis imply that the map $i_{*}$ is an isomorphism on cohomology.

We have an exact sequence of $G$-modules $0\rightarrow C(X,\mathbbm{Z})\rightarrow C(X, \mathbbm{R})\rightarrow C(X, \text{U}(1))$, where $\mathbbm{Z}$ is considered as an additive discrete group, $\mathbbm{R}$ is considered as an additive topological group, and $\text{U}(1)$ is a multiplicative topological subgroup of $\mathbbm{C}^{\times}$. The map $C(X,\mathbbm{Z})\rightarrow C(X, \mathbbm{R})$ is induced from the inclusion the inclusion $\mathbbm{Z}\rightarrow \mathbbm{R}$ while the map $C(X,\mathbbm{R})\rightarrow C(X,\text{U}(1))$ is given by $f\mapsto \text{exp}(2\pi i f)$. In general this map fails to be surjective. But since $H^{1}(X, \mathbbm{Z})=0$, by the universal coefficients theorem $\text{Hom}(H_{1}(X,\mathbbm{Z}), \mathbbm{Z})=0$. But $H_{1}(X,\mathbbm{Z})$ is the abelianization of $\pi_{1}(X)$ and by the universal property of abelianization, we have $\text{Hom}(\pi_{1}(X), \mathbbm{Z})=\text{Hom}(H_{1}(X,\mathbbm{Z}), \mathbbm{Z})=0$. Thus for any map $g\in C(X, \text{U}(1))$ the induced map $g_{*}:\pi_{1}(X)\rightarrow \pi_{1}(\text{U}(1))=\mathbbm{Z}$ must be trivial. Then since the map $\text{exp}(2\pi i \cdot):\mathbbm{R}\rightarrow \text{U}(1)$ is a covering, by standard results in algebraic topology (e.g. \cite[Proposition 1.33]{MR1867354}), since $X$ is connected and locally path connected by assumption, there exists a lift $\tilde{g}:X\rightarrow \mathbbm{R}$. In particular, this implies $C(X,\mathbbm{R})\rightarrow C(X,\text{U}(1))$ is surjective so the above sequence can be extended to a short exact sequence. In fact, we have a morphism of short exact sequences

\[\begin{tikzcd}
    0 \arrow{r} & C(X, \mathbbm{Z})\arrow{r} & C(X, \mathbbm{R})\arrow{r} & C(X, \text{U}(1))\arrow{r} & 0\\
    0 \arrow{r} & \mathbbm{Z}\arrow{u}\arrow{r} &  \mathbbm{R}\arrow{u}\arrow{r} &  \text{U}(1)\arrow{u}\arrow{r} & 0
 \end{tikzcd}\]
 
\noindent where the vertical homomorphism sends elements of the group to the constant function. Therefore, we obtain a morphism of long exact sequences in cohomology

\[\begin{tikzcd}
     \cdots  H^{i}(G,C(X, \mathbbm{R}))\arrow{r} & H^{i}(G,C(X, \text{U}(1)))\arrow{r} & H^{i+1}(G, C(X,\mathbbm{Z}))\arrow{r} &  \cdots\\
    \cdots H^{i}(G,\mathbbm{R})\arrow{u}\arrow{r} & H^{i}(G, \text{U}(1))\arrow{u}\arrow{r} & H^{i+1}(G, \mathbbm{Z})\arrow{r}\arrow{u} & \cdots 
 \end{tikzcd}\]
 
\noindent where the vertical maps in cohomology are simply the pushforwards of the morphisms between the coefficient groups (this follows from ``naturality" of the connecting homomorphism).
But since $\mathbbm{R}$ and $C(X,\mathbbm{R})$ are vector spaces and $G$ is finite, $|G|$ is invertible hence these module are cohomologically trivial. Thus we have the commuting diagram with exact rows

\[\begin{tikzcd}
     0\arrow{r} & H^{3}(G,C(X, \text{U}(1)))\arrow{r} & H^{4}(G, C(X,\mathbbm{Z}))\arrow{r} & 0 \\
    0\arrow{r} & H^{3}(G, \text{U}(1))\arrow{u}\arrow{r} & H^{4}(G, \mathbbm{Z})\arrow{r}\arrow{u} & 0
 \end{tikzcd}\]

Thus the two horizontal arrows are isomorphisms. Since $X$ is connected, the right vertical arrow is also an isomorphism. This implies the left vertical arrow $H^{3}(G, \text{U}(1))\rightarrow H^{3}(G, C(X,\text{U}(1)) )$ is an isomorphism as well. This concludes the proof of the first part of the theorem.

For the second part, note that by \cite{MR589649}, there is a short exact sequence $1\rightarrow \text{Pic}(C(X))\rightarrow \text{Out}(C(X)\otimes \mathcal{K})\rightarrow \text{Homeo}(X)\rightarrow 1$, where where $\text{Pic}(X)$ denotes the group of line bundles on $X$ and $\text{Homeo}(X)$ is the group of homeomorphisms acting on $C(X)$ by *-automorphisms via Gelfand duality and extending to $C(X)\otimes \mathcal{K}$. This action induces a natural splitting, hence $\text{Out}(C(X)\otimes \mathcal{K})\cong \text{Pic}(X)\rtimes \text{Homeo}(X)$. But this splitting actually extends to a fully faithful monoidal functor between the 2-groups $\underline{\text{Aut}}(C(X))\hookrightarrow \underline{\text{Aut}}(C(X)\otimes \mathcal{K})$, and if $\text{Pic}(X)=0$, it is essentially surjective and is thus an equivalence of 2-groups, and we can apply the previous theorem. Note that if $X$ is homotopy equivalent to a CW-complex, then $H^{2}(X,\mathbbm{Z})\cong \text{Pic}(X)$ (e.g. \cite[Proposition 3.10]{Hatcher}).

\end{proof}

\begin{cor}\label{non-trivial-lifting}
For every $n\ge 4$, there exists infinitely many closed connected $n$-manifolds $M$ such that the Postnikov associator (i.e. lifting obstruction) $[\omega_{M}]\in H^{3}(\text{Out}(C(M)\otimes \mathcal{K}), C(M, \text{U}(1)) )$ is non-trivial.
\end{cor}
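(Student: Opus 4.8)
The plan is to combine the construction behind Theorems \ref{constructingtheaction} and \ref{actionsonmanifolds} with the injectivity statement extracted in the proof of Theorem \ref{noactionsonmanifolds}. First I would fix a finite group $Q$ carrying a non-trivial class $[\omega]\in H^{3}(Q,\text{U}(1))$ --- for concreteness $Q=\mathbbm{Z}/2\mathbbm{Z}$ with $\omega$ the non-trivial normalized $3$-cocycle --- and invoke Lemma \ref{trivialize3cocycle} to obtain a finite group $G_{0}$, a surjection $\rho_{0}\colon G_{0}\to Q$ with abelian kernel $K_{0}$, and a normalized $c_{0}\in C^{2}(G_{0},\text{U}(1))$ with $dc_{0}=\rho_{0}^{*}(\omega)$ and $c_{0}|_{K_{0}}=1$. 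To manufacture infinitely many manifolds I would, for each prime $p$, set $G_{p}:=G_{0}\times\mathbbm{Z}/p\mathbbm{Z}$, take $\rho_{p}\colon G_{p}\to Q$ to be $\rho_{0}$ precomposed with the projection onto $G_{0}$ (surjective, with abelian kernel $K_{p}=K_{0}\times\mathbbm{Z}/p\mathbbm{Z}$), and let $c_{p}$ be the pullback of $c_{0}$ along that projection, so that $dc_{p}=\rho_{p}^{*}(\omega)$ and $c_{p}|_{K_{p}}=1$ persist.

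Next I would run the surgery construction of Section \ref{manifoldspi1finite} for the group $G_{p}$: since $n\ge 4$ one realizes $G_{p}$ as the fundamental group of a closed connected $n$-manifold and passes to the universal cover, obtaining a simply connected closed connected $n$-manifold $Y_{p}$ with a free $G_{p}$-action. Putting $M_{p}:=Y_{p}/K_{p}$ gives a closed connected $n$-manifold with $\pi_{1}(M_{p})\cong K_{p}$ finite, hence $H^{1}(M_{p},\mathbbm{Z})=0$. Feeding $B=C(Y_{p})$, the free action $\pi\colon G_{p}\to\text{Homeo}(Y_{p})\subset\text{Aut}(C(Y_{p}))$, and the data $(\rho_{p},c_{p})$ into Theorem \ref{constructingtheaction} yields an $\omega$-anomalous $Q$-action on $C(Y_{p})\rtimes_{\pi,c_{p}}K_{p}=C(Y_{p})\rtimes K_{p}$, the crossed product being untwisted since $c_{p}|_{K_{p}}=1$; and, exactly as in the proof of Theorem \ref{classofspacesthm} (freeness of the $K_{p}$-action and connectedness of $M_{p}$), this algebra is strongly Morita equivalent to $C(M_{p})$, so one gets an $\omega$-anomalous $Q$-action on $C(M_{p})\otimes\mathcal{K}$.

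To conclude I would translate this into non-triviality of the lifting obstruction. Unpacking the Postnikov-level description of monoidal functors between $2$-groups, such a $\text{U}(1)$-linear $\omega$-anomalous action is the same as a homomorphism $\Lambda\colon Q\to\text{Out}(C(M_{p})\otimes\mathcal{K})$ together with a cochain $\mu\in C^{2}(Q,\text{UZM}(C(M_{p})\otimes\mathcal{K}))$ satisfying $d\mu\cdot\Lambda^{*}(\omega_{M_{p}})=i_{*}(\omega)$, where $\omega_{M_{p}}$ represents the Postnikov $3$-class and $i_{*}$ is the pushforward along the canonical inclusion $\text{U}(1)\hookrightarrow\text{UZM}(C(M_{p})\otimes\mathcal{K})=C(M_{p},\text{U}(1))$. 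Passing to cohomology, $\Lambda^{*}[\omega_{M_{p}}]=i_{*}[\omega]$ in $H^{3}(Q,C(M_{p},\text{U}(1)))$ with the $Q$-module structure induced by $\Lambda$, and since $\text{Pic}(M_{p})$ acts trivially on the center this action factors through $\text{Homeo}(M_{p})$. Because $M_{p}$ is compact, connected, locally path connected and Hausdorff with $H^{1}(M_{p},\mathbbm{Z})=0$, the argument in the proof of Theorem \ref{noactionsonmanifolds} applies to this $Q$-module and shows $i_{*}\colon H^{3}(Q,\text{U}(1))\to H^{3}(Q,C(M_{p},\text{U}(1)))$ is an isomorphism, in particular injective; as $[\omega]\ne 1$ this forces $[\omega_{M_{p}}]\ne 1$. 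Finally, the $M_{p}$ are pairwise non-homeomorphic since $|\pi_{1}(M_{p})|=|K_{0}|\cdot p$ takes infinitely many values, giving the required infinite family.

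The main obstacle I anticipate is bookkeeping rather than a genuinely new idea: one must check that replacing the ``minimal'' group $G_{0}$ of Lemma \ref{trivialize3cocycle} by $G_{0}\times\mathbbm{Z}/p\mathbbm{Z}$ does not disturb any step --- the output of the Section \ref{manifoldspi1finite} construction is still a closed $n$-manifold with vanishing first cohomology carrying the anomalous action --- and, more importantly, that the $Q$-module appearing in the comparison $\Lambda^{*}[\omega_{M_{p}}]=i_{*}[\omega]$ is precisely the module by homeomorphisms on which the proof of Theorem \ref{noactionsonmanifolds} proves $i_{*}$ injective; this uses that the image of $\Lambda$ in $\text{Out}(C(M_{p})\otimes\mathcal{K})$ acts on $C(M_{p},\text{U}(1))$ through $\text{Homeo}(M_{p})$, which is the content of the splitting $\text{Out}(C(M_{p})\otimes\mathcal{K})\cong\text{Pic}(M_{p})\rtimes\text{Homeo}(M_{p})$ used in the second half of that proof.
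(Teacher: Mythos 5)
Your proposal is correct and follows essentially the same route as the paper: produce, via Lemma \ref{trivialize3cocycle}, the surgery construction of Section \ref{manifoldspi1finite}, and Theorem \ref{constructingtheaction}, an $\omega$-anomalous $Q$-action on $C(M)\otimes\mathcal{K}$ for a closed connected $n$-manifold $M$ with $H^{1}(M,\mathbbm{Z})=0$, and then use the isomorphism $i_{*}\colon H^{3}(Q,\text{U}(1))\to H^{3}(Q,C(M,\text{U}(1)))$ established in the proof of Theorem \ref{noactionsonmanifolds} to conclude that the pullback of $[\omega_{M}]$ under the induced homomorphism equals $i_{*}[\omega]\neq 1$, forcing $[\omega_{M}]\neq 1$. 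Your only genuine addition is the explicit production of infinitely many such manifolds by padding the group from Lemma \ref{trivialize3cocycle} with $\mathbbm{Z}/p\mathbbm{Z}$ factors and distinguishing the resulting quotients by the order of their fundamental groups; the paper leaves the infinitude claim implicit, so this is a correct and worthwhile supplement.
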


\begin{proof}
Let $n\ge 4$, $G$ a finite group, and $\omega\in Z^{3}(G, \text{U}(1))$. Let $M$ be an $n$-manifold with $\pi_{1}(M)$ finite abelian such that there exists an $\omega$-anomalous $G$ action on $C(M)\otimes \mathcal{K}$ (which always exist by Example \ref{manifoldspi1finite}). Let $\rho: G\rightarrow \text{Out}(C(M)\otimes \mathcal{K})$ be the corresponding homomorphism. By the proof of the previous theorem, $i_{*}: H^{3}(\rho(G), \text{U}(1))\rightarrow H^{3}(\rho(G), C(M, \text{U}(1)))$ is an isomorphism. This implies $[\omega_{M}]|_{\rho(G)}$ is cohomologous to a scalar valued $3$-cocycle which is trivial in $H^{3}(\rho(G), \text{U}(1))$ if and only if $[\omega_{M}]|_{\rho(G)}$ is trivial in $H^{3}(\rho(G), C(M, \text{U}(1)))$. But this is non-trivial, since its image under $\rho^{*}$ is $\omega\in H^{3}(G, \text{U}(1))$, which is non-trivial by assumption.
\end{proof}

\section{Anomalies in coarse geometry.}\label{anomaliesincoarsegeom}

Coarse geometry is concerned with the study of the large scale properties of geometric spaces. The noncommutative perspective we consider here initially arose from the study of index theory for elliptic operators on non-compact Riemannian manifolds \cite{MR1147350}. C*-algebras arising in coarse geometry have recently found applications in the study of topological band structures of single particle systems \cite{MR3927086}. This latter perspective motivates our study, and we will say more about this at the end of the section.

Here we recall some basic notions from the theory of coarse metric geometry. Let $X$ be a discrete metric space. $X$ has \textit{bounded geometry} if for every $R>0$ there exists an $N$ such that the $R$-ball of any point has at most $N$ elements. We will primarily be concerned with bounded geometry metric spaces. For two bounded geometry metric spaces $X,Y$, a function $f:X\rightarrow Y$ is called \textit{coarse} if for all $R$, there exists an $S$ such that $d(x,y)\le R$ implies $d(f(x),f(y))\le S$. Two coarse maps $f,g: X\rightarrow Y$ are \textit{close} if there exists some $S$ such that $d(f(x),g(x))\le S$ for all $X\in X$. A \textit{coarse equivalence} consists of coarse maps $f:X\rightarrow Y$ and $f^{-1}:Y\rightarrow X$ such that $f\circ f^{-1}$ and $f^{-1}\circ f$ are close to the identity on $Y$ and $X$ respectively. Large scale properties of $X$ and $Y$ are invariant under coarse equivalence. Natural examples arise from geometric group theory. Given a discrete group $G$ and a finite presentation, we can endow the set $G$ with a metric space structure using the word metric. It is easy to show that for any two presentations on the same group, the identity map is a coarse equivalence.

Recall that a metric space with bounded geometry has Yu's \textit{Property A} if for every $\epsilon>0$ and $R>0$, there exists a collection of subsets $\{A_{x}\subseteq X\times \mathbbm{N}\}_{x\in X}$ and a constant $S$ such that

\begin{itemize}
\item
$A_{x}\subseteq B_{S}(x)\times \mathbbm{N}$.
\item 
$\frac{|A_{x}\Delta A_{y}|}{|A_{x}\cap A_{y}|}\le \epsilon$ when $d(x,y)\le R$.
\end{itemize}

\noindent Property $A$ is a geometric version of amenability. Given a finitely generated group $\Gamma$ with word-length metric, this is a metric space with bounded geometry. It has property $A$ if and only if the reduced group C*-algebra is exact \cite{MR1876896, MR1876897}.

Now, let $H$ be a separable infinite dimensional Hilbert space, and consider the Hilbert space $\ell^{2}(X, H)$. For $x\in X$, let $P_{x}$ denote the projection onto functions $\eta\in \ell^{2}(X, H)$ supported at the point $x$, which is naturally isomorphic to $H$. An operator $T$ on this space is called \textit{locally compact} if for every $x,y\in X$, the restriction $P_{y}TP_{x}$ is a compact operator. $T$ has \textit{finite propagation} if there exists an $R>0$ such that $P_{y}TP_{x}\ne 0$ implies $d(x,y)<R$. The \textit{Roe algebra} $C^{*}(X)$ is defined as the norm closure of the locally compact operators with finite propagation in $B(\ell^{2}(X, H))$. Their algebraic and analytic structure captures important properties of the underlying large scale structure. Indeed, if $X$ is coarsely equivalent to $Y$, then $C^{*}(X)\cong C^{*}(Y)$. Conversely in the presence of property $A$, these C*-algebras remember the large scale structure completely: if $X$ and $Y$ have property $A$ and $C^{*}(X)\cong C^{*}(Y)$, then $X$ is coarsely equivalent to $Y$ \cite{MR3116573}.

Consider the group of coarse auto-equivalences of a bounded geometry metric space $X$, taken up to the equivalence relation of closeness. We denote this group $\text{Coa}(X)$. Then we have a natural homomorphism $\text{Coa}(X)\rightarrow \text{Out}(C^{*}(X))$. In fact, when $X$ has property $A$, this is an isomorphism of groups \cite{BV2020}. Given the rich geometric structure of $\text{Out}(C^{*}(X))$, any anomalous symmetries of a Roe C*-algebra would be very interesting since they should have a coarse-geometric interpretation. Unfortunately, anomalous symmetries of Roe algebras do not exist. 

To state the following theorem, note that $\text{UZM}(C^{*}(X))\cong \text{U}(1)$, so the Postnikov assocatior data gives us a class $[\omega_{X}]\in H^{3}(\text{Out}(C^{*}(X)), \text{U}(1))$. 

\begin{prop}[Theorem \ref{noactiononcoarse}]
Let $\omega_{X}\in Z^{3}(\text{Out}(C^{*}(X)), \text{U}(1)))$ be a representative Postnikov cocycle. Then $[\omega_{X}]$ is trivial in $H^{3}(\text{Out}(C^{*}(X)), \text{U}(1))$. As a consequence, there are no anomalous actions on $C^{*}(X)$.
\end{prop}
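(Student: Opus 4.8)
\emph{Plan.} The idea is to produce an explicit \emph{strict} set-theoretic section of the $2$-group $\underline{\text{Aut}}(C^{*}(X))$ along which the Postnikov $3$-cocycle is literally the constant $1$; triviality of $[\omega_{X}]$ and the non-existence of anomalous actions then follow formally. Write $\mathcal{H}=\ell^{2}(X,H)$, so that $C^{*}(X)\subseteq B(\mathcal{H})$. The compacts $\mathcal{K}=\mathcal{K}(\mathcal{H})$ lie inside $C^{*}(X)$ (rank-one operators $P_{y}\xi\langle P_{x}\eta,\,\cdot\,\rangle$ have propagation $\le d(x,y)$ and are locally compact) and form the unique minimal ideal; in particular this ideal is essential, the inclusion $C^{*}(X)\hookrightarrow B(\mathcal{H})=M(\mathcal{K})$ is the canonical faithful nondegenerate representation, $M(C^{*}(X))$ is the idealizer of $C^{*}(X)$ in $B(\mathcal{H})$, and since $\mathcal{K}\subseteq C^{*}(X)$ we have $C^{*}(X)'\cap B(\mathcal{H})=\mathbbm{C}1$, hence $\text{UZM}(C^{*}(X))=\text{U}(1)$.

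\emph{Step 1 (every automorphism is spatially implemented).} Any $\alpha\in\text{Aut}(C^{*}(X))$ preserves the unique minimal ideal $\mathcal{K}$, so $\alpha|_{\mathcal{K}}\in\text{Aut}(\mathcal{K})=\{\text{Ad}(V):V\in\text{U}(\mathcal{H})\}$, with $V$ determined up to a scalar. For $a\in C^{*}(X)$, $k\in\mathcal{K}$ one has $\alpha(a)\,\text{Ad}(V)(k)=\alpha(a)\alpha(k)=\alpha(ak)=\text{Ad}(V)(ak)=\text{Ad}(V)(a)\,\text{Ad}(V)(k)$; letting $\text{Ad}(V)(k)$ run over $\mathcal{K}$ and using nondegeneracy gives $\alpha=\text{Ad}(V)$ on all of $C^{*}(X)$. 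Thus $V$ lies in the unitary normalizer $N$ of $C^{*}(X)$ in $B(\mathcal{H})$, and $\text{Ad}\colon N\to\text{Aut}(C^{*}(X))$ is onto with kernel $\text{U}(1)$.

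\emph{Step 2 (the strict section and telescoping).} Fix sections $g\mapsto\alpha_{g}$ of $\text{Aut}(C^{*}(X))\to\text{Out}(C^{*}(X))$ with $\alpha_{1}=\text{id}$, and $g\mapsto V_{g}\in N$ with $\text{Ad}(V_{g})=\alpha_{g}$, $V_{1}=1$. Put $m_{g,h}:=V_{g}V_{h}V_{gh}^{*}$. Since $\alpha_{g}\alpha_{h}\alpha_{gh}^{-1}\in\text{Inn}(C^{*}(X))$, there is $w\in\text{UM}(C^{*}(X))$ with $\text{Ad}(w)=\alpha_{g}\alpha_{h}\alpha_{gh}^{-1}=\text{Ad}(m_{g,h})$, whence $w^{*}m_{g,h}\in C^{*}(X)'\cap B(\mathcal{H})=\mathbbm{C}1$ and so $m_{g,h}\in\text{UM}(C^{*}(X))$; a direct check gives $m_{g,h}\,\alpha_{gh}(a)=(\alpha_{g}\circ\alpha_{h})(a)\,m_{g,h}$, so $m_{g,h}$ is a morphism $\alpha_{gh}\to\alpha_{g}\circ\alpha_{h}$ in $\underline{\text{Aut}}(C^{*}(X))$, with $m_{1,h}=m_{g,1}=1$. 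Now feed $(\alpha_{g},m_{g,h})$ into the $3$-cocycle extraction formula of Section~\ref{2GrandAnom}. Since vertical composition of $2$-morphisms is operator multiplication, $\otimes$ of $2$-morphisms is $u_{1}\,\widetilde{\alpha}_{1}(u_{2})$ with $\widetilde{\alpha_{g}}=\text{Ad}(V_{g})$ on multipliers, and all associators in $\underline{\text{Aut}}$ are trivial, every factor is a word in the $V$'s and the product collapses:
\[
\big(m_{g,hk}^{-1}\circ(1\otimes m_{h,k}^{-1})\big)\circ\big((m_{g,h}\otimes 1)\circ m_{gh,k}\big)
=\big(V_{ghk}V_{k}^{*}V_{h}^{*}V_{g}^{*}\big)\big(V_{g}V_{h}V_{k}V_{ghk}^{*}\big)=1 .
\]
Hence the Postnikov cocycle is $\omega_{X}\equiv 1$, so $[\omega_{X}]=1$ in $H^{3}(\text{Out}(C^{*}(X)),\text{U}(1))$.

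\emph{Step 3 (consequence, and the main difficulty).} By the description of monoidal functors between $2$-groups in Section~\ref{2GrandAnom}, a $\text{U}(1)$-linear monoidal functor $\text{2-Gr}(G,\text{U}(1),\omega)\to\underline{\text{Aut}}(C^{*}(X))$ determines a homomorphism $\Lambda\colon G\to\text{Out}(C^{*}(X))$ and a cochain $\mu\in C^{2}(G,\text{U}(1))$ with $d\mu\cdot\Lambda^{*}(\omega_{X})=\omega$ (here $\text{UZM}(C^{*}(X))=\text{U}(1)$, and $\text{U}(1)$-linearity forces the coefficient comparison map to be the identity). Since $[\omega_{X}]=1$, $\omega$ is a coboundary, i.e. $[\omega]=1$, so any anomalous action on $C^{*}(X)$ has trivial anomaly. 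The crux of the argument is Step~1 — that every $*$-automorphism of $C^{*}(X)$ is implemented by a unitary on $\mathcal{H}$ normalizing $C^{*}(X)$; conceptually this is the statement that restriction to the minimal ideal is a monoidal functor $\underline{\text{Aut}}(C^{*}(X))\to\underline{\text{Aut}}(\mathcal{K})$ which is the identity on $\pi_{2}=\text{U}(1)$, and since $\pi_{1}(\underline{\text{Aut}}(\mathcal{K}))=\text{Out}(\mathcal{K})=1$, naturality of the Postnikov class forces $[\omega_{X}]$ to vanish. Everything else is bookkeeping around the trivial associator of $\underline{\text{Aut}}$.
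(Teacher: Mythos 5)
Your proof is correct and rests on exactly the same mechanism as the paper's: $\mathcal{K}$ is the unique minimal ideal of $C^{*}(X)$, every automorphism restricts to it, $M(C^{*}(X))\subseteq B(\ell^{2}(X,H))=M(\mathcal{K})$, and $\mathrm{Out}(\mathcal{K})=1$ kills the Postnikov class. The only difference is presentational — you unpack the paper's monoidal functor $\underline{\mathrm{Aut}}(C^{*}(X))\to\underline{\mathrm{Aut}}(\mathcal{K})\cong\text{2-Gr}(1,\mathrm{U}(1),1)$ into an explicit spatial implementation $\alpha_{g}=\mathrm{Ad}(V_{g})$ whose associated cochain $m_{g,h}=V_{g}V_{h}V_{gh}^{*}$ trivializes $\omega_{X}$ on the nose — and you correctly note this equivalence yourself at the end.
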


\begin{proof}
Let $\mathcal{K}$ denote the algebra of compact operators on a separable Hilbert space. Then there is a natural inclusion $\mathcal{K}\hookrightarrow C^{*}(X)$ via identifying $\mathcal{K}$ with the compact operators on $\ell^{2}(X, H)$. This ideal is the unique minimal ideal by \cite[Corollary 2.3]{MR1874489}

Since $\mathcal{K}$ is the unique minimal ideal for any automorphism $\alpha\in \underline{\text{Aut}}(C^{*}(X))$ $\alpha(\mathcal{K})=\mathcal{K}$, and thus $\alpha$ restricts to an auto-equivalence of $\mathcal{K}$.
But note that the multiplier algebra of $C^{*}(X)$ is naturally identified inside $B(\ell^{2}(X, H))=M(\mathcal{K})$ as the closure of the algebra of band dominated operators (these are the operators with finite propagation but not necessarily locally compact) (\cite[Theorem 4.1]{BV2020}). Using this identification, for any unitary $u\in M(C^{*}(X))$ that intertwines automorphisms $\alpha$ and $\beta$, $u\in M(\mathcal{K})$ intertwines $\alpha|_{\mathcal{K}}$ and $\beta|_{\mathcal{K}}$. Thus restriction gives a natural monoidal functor from $\underline{\text{Aut}}(C^{*}(X)\rightarrow \underline{\text{Aut}}(\mathcal{K})$. But since all automorphisms of $\mathcal{K}$ are inner, $\pi_{1}(\underline{\text{Aut}}(\mathcal{K}))$ is trivial, and so $\underline{\text{Aut}}(\mathcal{K})\cong \text{2-Gr}(1,\text{U}(1),1)$. 

Thus we have a $\text{U}(1)$-linear monoidal functor $\underline{\text{Aut}}(C^{*}(X))\cong \text{2-Gr}(\text{Out}(C^{*}(X)), \text{U}(1), \omega_{X})\rightarrow \underline{\text{Aut}}(\mathcal{K})\cong \text{2-Gr}(1,\text{U}(1),1)$. But this gives a trivialization of $\omega_{X}$.

\end{proof}

The previous theorem is somewhat disappointing. But as the proof demonstrates, the problem seems to be with the existence of a unique minimal essential ideal of compact operators $\mathcal{K}\le \text{C*}(X)$. It is natural, then, to ask whether there can be anomalous symmetries of the algebra $\text{C*}(X)/\mathcal{K}$?

The answer is yes, and we can use our technology to build examples. Let $G$ be a finite group, and suppose that we have an action of $G$ on $X$ by coarse auto-equivalences. We will assume, for convenience, that each $g$ acts bijectively on $X$, and that the group relations are satisfied on the nose (not only up to closeness). We say the action is \textit{coarsely discontinuous} if for all $R$, there exists a bounded subset $K\subseteq X$ such that $d(x,g\cdot x)\ge R$ for all $x\notin K$.

We define a metric on the orbit space $X/G$ using the Hausdorff metric for subsets of a metric space, namely

$$\displaystyle d_{G}([x],[y])=\max\{\  \max_{h\in G}\ \min_{g\in G}\ d(hx,gy),\ \ \  \max_{h\in G}\ \min_{g\in G} d(hy,gx)\ \}.$$

\noindent This metric makes $X/G$ into a metric space with bounded geometry. 

Recall that $C^{*}(X)$ contains a copy of $\mathcal{K}$, the algebra of compact operators on a separable Hilbert space realized as $\mathcal{K}(\ell^{2}(X, H))\subseteq C^{*}(X)$. This is the unique minimal ideal. We have the following theorem: 

\begin{prop}\cite[Corollary 5.10]{MR4040015} 
Let $X$ be a bounded geometry metric space with property $A$, and $G$ a finite group acting coarsely discontinuously on $X$. Then there is an isomorphism $C^{*}(X)/\mathcal{K}\rtimes G\cong C^{*}(X/G)/\mathcal{K}$.

\end{prop}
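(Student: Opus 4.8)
The statement is quoted from \cite[Corollary 5.10]{MR4040015} and is used here as a black box; nonetheless, here is how I would prove it. The plan is to reduce to a \emph{free} action and then analyze the crossed product through the fixed-point algebra. \textbf{Step 1 (excision).} First I would observe that for any $G$-invariant subset $X'\subseteq X$ with $X\setminus X'$ bounded, extension of operators by zero gives a $G$-equivariant inclusion $C^{*}(X')\hookrightarrow C^{*}(X)$ inducing an isomorphism $C^{*}(X')/\mathcal{K}\cong C^{*}(X)/\mathcal{K}$, since any element of $C^{*}(X)$ supported near a bounded set is a finite sum of locally compact pieces, hence lies in $\mathcal{K}$. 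Coarse discontinuity (applied with any $R>0$) forces the non-free locus $\{x : gx=x \text{ for some } g\neq e\}$ to be bounded; removing its (bounded) $G$-saturation, and noting the analogous excision holds downstairs, $C^{*}(X/G)/\mathcal{K}\cong C^{*}(X'/G)/\mathcal{K}$, I may assume $G$ acts freely on $X$.

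\textbf{Step 2 (the fixed-point algebra).} Write $Y=X/G$ with the Hausdorff metric of the statement, let $U_{g}$ be the permutation unitary of $g$ on $\ell^{2}(X,H)$, and let $\iota\colon\ell^{2}(Y,H)\hookrightarrow\ell^{2}(X,H)$ be the isometry onto the $G$-invariant vectors (average over each orbit), with range projection $P=\tfrac{1}{|G|}\sum_{g}U_{g}$. Then compression $\Psi(T):=\iota^{*}T\iota$ is a $*$-homomorphism on $C^{*}(X)^{G}$: every $T\in C^{*}(X)^{G}$ commutes with $P$ and $\iota\iota^{*}=P$, so $\Psi(TS)=\iota^{*}TPS\iota=\Psi(T)\Psi(S)$. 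A propagation estimate shows $\Psi$ maps into $C^{*}(Y)$ and $\Psi(\mathcal{K}^{G})=\mathcal{K}(\ell^{2}(Y,H))$. The crucial point, and the place where coarse discontinuity is essential, is that $\ker\Psi\subseteq\mathcal{K}$: an operator in $\ker\Psi$ is supported on the orthogonal complement of the invariants inside each orbit, but $G$-equivariance together with coarse discontinuity (orbit points move arbitrarily far apart outside bounded sets) forces every inter-orbit block of a finite-propagation $G$-invariant operator to be, far out, a multiple of an ``orbit-translation'' operator --- which preserves the invariant vectors --- so such a $T$ is supported on a bounded region and is compact. Conversely $\Psi$ is surjective modulo $\mathcal{K}$: given $S\in C^{*}(Y)$ of propagation $R$, choose for each pair of orbits at quotient-distance $\le R$ a group element realizing that distance, and use it to build a $G$-equivariant finite-propagation lift $T$ with $\Psi(T)=S$ away from the bounded region. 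Hence $\bar\Psi\colon C^{*}(X)^{G}/\mathcal{K}^{G}\xrightarrow{\sim}C^{*}(Y)/\mathcal{K}$.

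\textbf{Step 3 (from fixed points to the crossed product).} Let $B=C^{*}(X)/\mathcal{K}$, now carrying a free $G$-action. Passing to $G$-fixed points commutes with the quotient by $\mathcal{K}$ (average with $\tfrac{1}{|G|}\sum_{g}$), so $B^{G}=C^{*}(X)^{G}/\mathcal{K}^{G}\cong C^{*}(Y)/\mathcal{K}$; in particular $B^{G}$ is stable, since stability of $C^{*}(Y)$ descends to the corona. For a free action of a finite group $B^{G}$ is Morita equivalent to $B\rtimes G$ (it is a full hereditary subalgebra), and $B\rtimes G$ is stable because the stabilization of $B$ can be taken $G$-equivariantly and $G$ is finite. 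Two stable Morita equivalent C*-algebras are isomorphic, so $B\rtimes G\cong B^{G}\cong C^{*}(Y)/\mathcal{K}$, which is the assertion.

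The main obstacle is the injectivity claim $\ker\Psi\subseteq\mathcal{K}$ in Step 2: this is precisely where the coarse-discontinuity hypothesis does its work, ruling out the ``phantom'' $G$-invariant operators that would otherwise obstruct the isomorphism, and it is the reason the statement holds for the corona but, as Theorem \ref{noactiononcoarse} shows, fails for $C^{*}(X)$ itself. I should stress that the cleanest treatment is the groupoid one of \cite{MR4040015}, identifying $C^{*}(X)\rtimes G$ with the Roe algebra of the transformation groupoid, with Property $A$ ensuring the underlying coarse-geometric identifications are well behaved; the fixed-point argument sketched above is a hands-on reorganization of the same mechanism.
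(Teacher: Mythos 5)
The paper does not actually prove this proposition: it is imported as a black box from \cite[Corollary 5.10]{MR4040015}, so there is no internal argument to measure yours against. Of your reconstruction, Steps 1 and 2 are essentially sound: excision of the bounded non-free locus works because a locally compact finite-propagation operator supported near a bounded set is compact (bounded geometry makes bounded sets finite), and for a $G$-invariant finite-propagation operator $T$ with $TP=0$, $P=\tfrac{1}{|G|}\sum_g U_g$, the identity $\sum_g T_{y,gx}=0$ together with coarse discontinuity (which makes the points $gx$ pairwise far apart outside a bounded set, so at most one summand is nonzero) forces $T$ to be supported on a bounded region, hence compact. Do note, though, that this only treats exactly-finite-propagation operators; to get injectivity of $\bar\Psi$ on the norm closure you need a quantitative estimate bounding $\|T\|$ modulo $\mathcal{K}$ by $\|\Psi(T)\|$ modulo $\mathcal{K}$ for invariant finite-propagation $T$, and also the slightly stronger statement that $\Psi(T)$ compact (not just zero) implies $T$ compact.

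The genuine gap is in Step 3. The implication ``stable and Morita equivalent implies isomorphic'' is Brown--Green--Rieffel, and it requires $\sigma$-unitality. Roe algebras, their coronas, and the crossed products of these by finite groups are \emph{not} $\sigma$-unital: for any positive $h\in C^{*}(X)$ each block $P_{x}hP_{x}$ is a positive compact operator on the infinite-dimensional $H$, so one can pick unit vectors $\xi_{x}$ with $\langle \xi_{x},P_{x}hP_{x}\xi_{x}\rangle\to 0$, and a weak-$*$ cluster point of the vector states at $\delta_{x}\otimes\xi_{x}$ is a state on $C^{*}(X)$ (and on the corona) vanishing on $h$; hence there is no strictly positive element. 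So your final sentence of Step 3 does not apply as stated. In addition, the fullness of the hereditary subalgebra $B^{G}\cong p(B\rtimes G)p$ is asserted from ``freeness'' of the induced action on the corona, which is not a formal consequence of freeness of the action on $X$ and needs its own argument. The robust repair is to bypass Morita equivalence: the covariant pair $(\mathrm{id},U)$ integrates to a $*$-homomorphism $C^{*}(X)\rtimes G\to B(\ell^{2}(X,H))$, $\sum_{g}a_{g}u_{g}\mapsto\sum_{g}a_{g}U_{g}$; identifying $\ell^{2}(X,H)\cong\ell^{2}(X/G,\ell^{2}(G)\otimes H)$ via a fundamental domain shows the image is $C^{*}(X/G)$ modulo compacts, while coarse discontinuity makes the supports of the components $a_{g}U_{g}$ asymptotically disjoint, so that $\sum_{g}a_{g}U_{g}$ compact forces each $a_{g}$ compact, i.e.\ the preimage of $\mathcal{K}$ is exactly $\mathcal{K}\rtimes G$. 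Combined with exactness of $-\rtimes G$ for finite $G$, this yields $(C^{*}(X)/\mathcal{K})\rtimes G\cong C^{*}(X/G)/\mathcal{K}$ directly; this is the groupoid mechanism you allude to in your closing remark, and it is the route the cited reference takes.
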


If $X$ does not have property $A$, the above theorem still holds upon replacing the Roe algebra $C^{*}(X)$ with a universal version $C^{*}_{max}(X)$. However, the examples of interest, particularly those applicable in physics, have property $A$, so we will stick to the Roe algebra. Applying our machinery, we obtain the following:

\begin{cor}[Theorem \ref{actiononPropertyA}]
Let $G$ be a finite group. Then for any anomaly $\omega\in Z^{3}(G, \text{U}(1))$, there exists a bounded geometry metric space $X$ with property $A$ and an $\omega$-anomalous action of $G$ on the Roe corona $C^{*}(X)/\mathcal{K}$. 
\end{cor}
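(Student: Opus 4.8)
The plan is to combine three ingredients: the cohomological reduction of Lemma \ref{trivialize3cocycle}, the crossed-product construction of Theorem \ref{constructingtheaction}, and the identification of Roe coronas under finite quotients from \cite[Corollary 5.10]{MR4040015}. First apply Lemma \ref{trivialize3cocycle} with $Q=G$; this produces a finite group $G'$, a surjection $\rho:G'\rightarrow G$ with finite kernel $K$, a normalized $2$-cochain $c\in C^{2}(G',\text{U}(1))$, and a normalized representative $\omega$ of the given cohomology class, such that $dc=\rho^{*}(\omega)$ and $c|_{K}=1$. The vanishing $c|_{K}=1$ is the crucial point: it guarantees that the twisted crossed product produced by Theorem \ref{constructingtheaction} is in fact an \emph{untwisted} crossed product, which is what allows us to match it with crossed products arising in coarse geometry.

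Next we produce a bounded geometry metric space $W$ carrying a coarsely discontinuous action of $G'$ by isometries. A concrete choice is $W:=\{1,2,3,\dots\}\times G'$ with the metric
\begin{equation*}
d\big((n,g),(m,h)\big):=\begin{cases}|n-m|&\text{if }g=h,\\ |n-m|+\max(n,m)&\text{if }g\neq h,\end{cases}
\end{equation*}
together with the left-translation action $f\cdot(n,g):=(n,fg)$. One checks directly that $d$ is a genuine metric; that $W$ has bounded geometry, as the $R$-ball of $(n,g)$ is contained in $\{1,\dots,R\}\times G'$ together with the finitely many points of the ray $\{(\cdot,g)\}$ within distance $R$ of $(n,g)$; and that $W$ has property $A$, since it is the union of the $|G'|$ subspaces $\{1,2,\dots\}\times\{g\}$, each of which is isometric to $\{1,2,\dots\}\subseteq\mathbbm{R}$ and hence has property $A$, and property $A$ is preserved by finite unions of subspaces of a bounded geometry space. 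The action is evidently isometric, and it is coarsely discontinuous: for $f\neq e$ one has $d\big((n,g),f\cdot(n,g)\big)=n$, so for any $R$ the bounded set $\{1,\dots,R-1\}\times G'$ serves as the required exceptional set. Conjugating by the permutation unitaries on $\ell^{2}(W,H)$ implementing these isometries gives a homomorphism $G'\rightarrow\text{Aut}(C^{*}(W))$; since $\mathcal{K}$ is the unique minimal ideal of $C^{*}(W)$ it is preserved, so we obtain a homomorphism $\pi:G'\rightarrow\text{Aut}(B)$ with $B:=C^{*}(W)/\mathcal{K}$.

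Now apply Theorem \ref{constructingtheaction} with the theorem's $Q$ equal to our $G$, its $G$ equal to our $G'$, the surjection $\rho:G'\rightarrow G$, the cochain $c$, and the homomorphism $\pi$: this yields an $\omega$-anomalous action of $G$ on $B\rtimes_{\pi,c|_{K}}K$, and since $c|_{K}=1$ (and $K$ is finite) this is the ordinary crossed product $B\rtimes_{\pi|_{K}}K=\big(C^{*}(W)/\mathcal{K}\big)\rtimes K$. Because $K\leq G'$ also acts coarsely discontinuously on the bounded geometry property $A$ space $W$, \cite[Corollary 5.10]{MR4040015}, applied with the finite group $K$, gives an isomorphism $\big(C^{*}(W)/\mathcal{K}\big)\rtimes K\cong C^{*}(W/K)/\mathcal{K}$. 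For the $W$ above, $W/K$ is again of the same type---the orbit space is identified with $\{1,2,\dots\}\times(K\backslash G')$, equipped with the analogous metric, so it is again a finite union of copies of $\{1,2,\dots\}$---hence is again a bounded geometry metric space with property $A$. Setting $X:=W/K$ therefore transports the $\omega$-anomalous $G$-action onto $C^{*}(X)/\mathcal{K}$, proving the statement. The step requiring the most thought is the construction of $W$: one needs a single bounded geometry space simultaneously carrying a coarsely discontinuous isometric action of the a priori large (but finite) group $G'$ and possessing property $A$---the latter being the property most at risk in a careless construction, but which survives here precisely because $G'$ is finite, so $W$ remains a finite union of rays. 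The remaining bookkeeping, namely verifying that the untwisted crossed product of Theorem \ref{constructingtheaction} is exactly the crossed product to which \cite[Corollary 5.10]{MR4040015} applies, is routine.
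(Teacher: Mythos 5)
Your overall architecture is exactly the paper's: apply Lemma \ref{trivialize3cocycle} to get a finite cover $\rho:G'\rightarrow G$ with a trivialization $c$ of $\rho^{*}(\omega)$ vanishing on $K=\ker\rho$, find a bounded geometry property $A$ space carrying a coarsely discontinuous isometric $G'$-action, and feed the resulting homomorphism $G'\rightarrow\mathrm{Aut}(C^{*}(W)/\mathcal{K})$ into Theorem \ref{constructingtheaction}, identifying the (untwisted, since $c|_{K}=1$) crossed product with $C^{*}(W/K)/\mathcal{K}$ via \cite[Corollary 5.10]{MR4040015}. The only real difference is the model space: the paper takes $W=\mathbbm{F}_{|G'|}$ with the word metric and lets $G'$ permute the generators, whereas you take a wedge of $|G'|$ rays with $G'$ permuting the rays. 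Your choice is, if anything, more elementary (property $A$ for a finite union of copies of $\mathbbm{N}$ is immediate, whereas for the free group one invokes exactness/amenability-type results), so the substitution is a legitimate simplification.

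However, the explicit formula you give for the metric on $W=\{1,2,\dots\}\times G'$ is not a metric: with $d\big((n,g),(m,h)\big)=|n-m|+\max(n,m)$ for $g\neq h$, the triangle inequality fails. For instance, with $g\neq l$,
\begin{equation*}
d\big((100,g),(1,l)\big)=99+100=199,\qquad d\big((100,g),(1,g)\big)+d\big((1,g),(1,l)\big)=99+1=100,
\end{equation*}
so the two-step path through $(1,g)$ is shorter than the claimed distance. The intended wedge metric should be $d\big((n,g),(m,h)\big)=n+m$ for $g\neq h$ (distance measured through the common basepoint), which one checks is a genuine metric; with this correction everything you assert survives, with $d\big((n,g),f\cdot(n,g)\big)=2n$ for $f\neq e$, so the action is still coarsely discontinuous, bounded geometry and property $A$ still hold, and the rest of the argument goes through as in the paper. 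This is a local, fixable slip rather than a gap in the strategy, but as written the space $W$ does not exist.
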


\begin{proof}
Let $\tilde{G}$ be a finite group and $\rho:\tilde{G}\rightarrow G$ a surjective homomorphism together with a trivialization $c$ of $\rho^{*}(\omega)$ such that $c|_{K}=1$, where $K:=\text{Ker}(\rho)$. Let $n=|\tilde{G}|$, and consider the free group $\mathbbm{F}_{n}$. This group together with the standard word metric is a bounded geometry metric space with Property (A). Let us label the generators by elements of $\tilde{G}$, $\{g_{a}\}_{a\in \tilde{G}}$. Then we have an action by $\tilde{G}$ on $\mathbbm{F}_{n}$ by automorphisms by permuting the generators via 

$$b\cdot g_{a}:=g_{ba}.$$

To check this is coarsely discontinuous for each $b$, let $w=w_{a_1}\dots w_{a_k}$ be a reduced word expression for $w\in \mathbbm{F}_{n}$. Then 

$$d(w,b\cdot w)=\ell(w^{-1}_{ba_{k}}\dots w^{-1}_{ba_{1}}w_{a_1}\dots w_{ba_k}).$$

But since $ba_{1}\ne a_{1}$ unless $b=1$, $d(w,b\cdot w)=2k$. Therefore, for each $R\ge 0$, pick since $k>\frac{R}{2}$. If $w$ is not in the ball $B_{k}(1)$ of radius $k$ around the identity, $d(w,b\cdot w)>R$, and thus the action of $\tilde{G}$ is coarsely discontinuous. Then we can apply Theorem \ref{constructingtheaction} to obtain an $\omega$-anomalous action on the Roe corona of $X/K$.
\end{proof}

\begin{ex}
From the discussion at the end of Section \ref{cohomologylemma}, if we let $\rho:\mathbbm{Z}/4\mathbbm{Z}\rightarrow \mathbbm{Z}/2\mathbbm{Z}$, $\omega\in Z^{3}(\mathbbm{Z}/2\mathbbm{Z}, \text{U}(1))$, then we can find $c\in C^{2}(\mathbbm{Z}/4\mathbbm{Z}, \text{U}(1))$ with $dc=\omega$ and $c|_{\text{Ker}(\rho)}=1$. 

Consider the $\mathbbm{Z}/4\mathbbm{Z}$ action on the lattice $\mathbbm{Z}^{2}$ (with the usual metric), generated by $(x,y)\mapsto (-y,x)$. It is easy to see that this action is coarsely discontinuous, and thus we have an $\omega$-anomalous action of $\mathbbm{Z}/2\mathbbm{Z}$ on the quotient of $\mathbbm{Z}^{2}$ by the $\mathbbm{Z}/2\mathbbm{Z}$ action $(x,y)\mapsto (-x,-y)$. This can be realized as a set by the upper half plane intersected $\mathbbm{Z}^{2}$, minus the negative part of the $x$-axis, but with the metric for $d(a,b)=\text{min}\{d(a,b),d(a,-b)\}$.
\end{ex}

We also note that the proof of \cite[Corollary 5.10]{MR4040015} can be easily adapted to replace the Roe algebra $C^{*}(X)$ by the stabilized uniform Roe algebra $C^{*}_{u}(X)\otimes \mathcal{K}$. Thus the constructions of this section will also produce anomalous actions in the stabilized uniform Roe algebra.

\subsection{Actions on wedges of spaces.}

In this section we will construct explicit anomalous actions of an arbitrary (finite) group $G$ on Roe coronas of a wedge of spaces over a point. This construction works for uniform Roe coronas as well. In some sense, these actions are ``less interesting" in general than the ones we can build with the twisted crossed product machinery. This is because they are asymptotically discrete, which will allow us to build them from an the usual data of an $\omega$-anomalous action on a finite set.

For simplicity, we will take $G$ acting on itself as our finite set model. Let $Y$ be any (infinite) metric space with bounded geometry and let $d$ denote the metric. Take the disjoint union of $|G|$ copies of $Y$. Fix a point $y_{0}\in Y$ and glue all the copies of $Y$ together at this point. We denote the subset corresponding to the $g^{th}$ component as $Y_{g}$. This space has a metric $\tilde{d}$, which restricted to each copy of $Y$ yields $d$, but for $x,y$ in distinct copies of $Y$, $\tilde{d}(x,y)=d(x,y_{0})+d(y_{0},x)$ for $x\in Y_{g},\ y\in Y_{h}$ and $g\ne h$. This defines a metric space which we denote $X$. This has an action of $G$ by isometries $g(y_{h})=y_{gh}$ for any point $y_{h}\in Y_{h}-\{y_{0}\}$, and $g(y_{0})=y_{0}$. Each group element has precisely one fixed point, $y_{0}$.

The $G$-action is implemented by unitaries $U_{g}\in \ell^{2}(X, H)$, defined by $U_{g}(\eta(x))=\eta(g^{-1}(x))$, and we have $U_{g}U_{h}=U_{gh}$. Furthermore, the automorphism $\alpha_{g}(a)=U_{g}aU^{*}_{g}$ preserves the Roe C*-algebra $C^{*}(X)\le \text{B}(\ell^{2}(X, H))$

For each $g,h\in G$ and $\omega\in Z^{3}(G, \text{U}(1))$, define

\begin{equation*}
    m_{g,h}(\eta(x)) = \begin{cases}
               \omega(g,h,h^{-1}g^{-1}k)^{-1}\eta(x) & x\in Y_{k}-\{y_{0}\}\\
               1               & x=y_{0}
           \end{cases}.
\end{equation*}

\noindent Since $m_{g,h}$ is diagonal, it is band dominated hence lives in the multiplier algebra of $C^{*}(X)$. Thus $[m_{g,h}]_{\mathcal{K}}\in B(\ell^{2}(X, H))/\mathcal{K}$ can be identified inside the multiplier algebra of the Roe corona $C^{*}(X)/\mathcal{K}$. 

\begin{prop}
The assignment $g\mapsto \alpha_{g}$ together with the collection $[m_{g,h}]_{\mathcal{K}}$ defines an $\omega$-anomalous action of $G$ on $C^{*}(X)/\mathcal{K}$.
\end{prop}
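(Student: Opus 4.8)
The plan is to verify directly that the data $(\alpha_g, [m_{g,h}]_{\mathcal{K}})$ satisfies the two conditions of Definition \ref{Explicitdefn}. Condition (1) asks that $[m_{g,h}]_{\mathcal{K}}\,\alpha_g(\alpha_h(x))=\alpha_{gh}(x)\,[m_{g,h}]_{\mathcal{K}}$ for all $x\in C^*(X)/\mathcal{K}$. Since $U_gU_h=U_{gh}$ on the nose, we already have $\alpha_g\circ\alpha_h=\alpha_{gh}$ as honest automorphisms, so condition (1) reduces to showing that $[m_{g,h}]_{\mathcal{K}}$ commutes with $\alpha_{gh}(x)$ modulo $\mathcal{K}$ for all $x\in C^*(X)$. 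This is where the ``asymptotically discrete'' nature of the construction enters: $m_{g,h}$ is a diagonal operator which is locally constant in the sense that it acts by the scalar $\omega(g,h,h^{-1}g^{-1}k)^{-1}$ on the whole wedge-component $Y_k$ (away from the single point $y_0$). A finite-propagation operator $a\in C^*(X)$ can only connect points at bounded distance; but two points in distinct components $Y_k$ and $Y_\ell$ are at distance $d(x,y_0)+d(y_0,x')$, which exceeds any fixed $R$ once both points are far from $y_0$. Hence for $a$ of propagation $\le R$, the ``off-diagonal-block'' part of $a$ (the part connecting distinct components) is supported on a bounded set, so it is a locally compact finite-propagation operator on a bounded set, i.e. compact. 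The remaining part of $a$ preserves each component, and there $m_{g,h}$ acts as a scalar, so it commutes exactly. Therefore $[m_{g,h},a]\in\mathcal{K}$ for all finite-propagation $a$, hence for all $a\in C^*(X)$ by density, giving condition (1). One should also check $m_{g,h}$ is genuinely a unitary multiplier — diagonal, bounded, band-dominated — which is immediate.

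For condition (2) we must verify $\omega(g,h,k)\,m_{gh,k}\,m_{g,h}=m_{g,hk}\,\alpha_g(m_{h,k})$, again modulo $\mathcal{K}$, though here I expect it to hold on the nose away from $y_0$ (and both sides agree trivially at $y_0$). Both sides are diagonal operators, so it suffices to compare the scalar they assign to a vector supported at a point $x\in Y_\ell-\{y_0\}$. The left side gives $\omega(g,h,k)\cdot\omega(gh,k,k^{-1}(gh)^{-1}\ell)^{-1}\cdot\omega(g,h,h^{-1}g^{-1}\ell)^{-1}$. For the right side, note $\alpha_g(m_{h,k})$ acts on $Y_\ell$ by the scalar that $m_{h,k}$ assigns to $g^{-1}Y_\ell=Y_{g^{-1}\ell}$, namely $\omega(h,k,k^{-1}h^{-1}g^{-1}\ell)^{-1}$, while $m_{g,hk}$ contributes $\omega(g,hk,(hk)^{-1}g^{-1}\ell)^{-1}$. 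Writing $t:=k^{-1}h^{-1}g^{-1}\ell$, matching the two sides becomes precisely the $3$-cocycle identity
\begin{equation*}
\omega(g,h,k)\,\omega(gh,k,t)^{-1}\,\omega(g,h,kt)^{-1}=\omega(h,k,t)^{-1}\,\omega(g,hk,t)^{-1},
\end{equation*}
which (after rearranging) is exactly $d\omega(g,h,k,t)=0$ with $\omega\in Z^3(G,\mathrm{U}(1))$ (with trivial $G$-action on $\mathrm{U}(1)$). So condition (2) follows from $\omega$ being a cocycle.

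The genuinely nontrivial point — and the only real obstacle — is the compactness argument in condition (1): one must argue carefully that for a finite-propagation locally compact operator $a$, the commutator $[m_{g,h},a]$ lands in $\mathcal{K}$. The key geometric input is that the metric $\tilde d$ on the wedge $X$ forces any two points in different petals to be mutually distant unless they are close to the common basepoint $y_0$; thus the part of $a$ that ``sees'' the non-constancy of the diagonal symbol $m_{g,h}$ is confined to a bounded neighborhood of $y_0$, hence is a norm-limit of finite-rank operators. I would state this as: if $a$ has propagation $R$, then $(1-P_{B_R(y_0)})\,[m_{g,h},a]\,(1-P_{B_R(y_0)})=0$ and the corner $P_{B_R(y_0)}\,[m_{g,h},a]$ together with its adjoint corner is supported on the finite-diameter, bounded-geometry set $B_R(y_0)$ and locally compact, hence compact. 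Passing to the norm closure $C^*(X)$ and then to the quotient $C^*(X)/\mathcal{K}$ completes the argument; the remaining bookkeeping ($*$-structure, that $\alpha_g$ descends to the quotient since $\alpha_g(\mathcal K)=\mathcal K$, normalization $m_{g,h}=1$ whenever $g$ or $h$ equals $1$ using normalization of $\omega$) is routine.
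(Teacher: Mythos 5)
Your treatment of condition (1) and your reduction of condition (2) to the 3-cocycle identity are both correct and follow essentially the same route as the paper: the same observation that $\alpha_g\circ\alpha_h=\alpha_{gh}$ on the nose reduces condition (1) to centrality of $[m_{g,h}]_{\mathcal K}$ in the multiplier algebra of the corona, the same geometric argument that a propagation-$R$ operator cannot couple distinct petals outside a bounded neighbourhood of $y_0$ while $m_{g,h}$ is locally constant on each petal, and the same pointwise scalar computation with $t=k^{-1}h^{-1}g^{-1}\ell$.

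There is, however, one concrete error in your handling of condition (2) at the basepoint. You assert that ``both sides agree trivially at $y_0$.'' They do not: by definition every $m_{g,h}$ acts as $1$ on vectors supported at $y_0$, and $\alpha_g(m_{h,k})=U_gm_{h,k}U_g^*$ also acts as $1$ there since $g^{-1}y_0=y_0$, so at $y_0$ the left side is $\omega(g,h,k)$ while the right side is $1$. The difference of the two sides is therefore $(\omega(g,h,k)-1)P_0$, where $P_0$ is the projection onto $\ell^2(\{y_0\},H)\cong H$; since $H$ is infinite-dimensional this operator is \emph{not} compact, so the identity does not hold ``modulo $\mathcal K$'' in the naive sense of the two sides differing by a compact operator. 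What saves the argument --- and what the paper does explicitly --- is that equality is only required in the multiplier algebra of $C^*(X)/\mathcal K$: it suffices that the difference annihilate $C^*(X)/\mathcal K$, i.e.\ that $\bigl(\omega(g,h,k)m_{gh,k}m_{g,h}-m_{g,hk}\alpha_g(m_{h,k})\bigr)a = (\omega(g,h,k)-1)P_0\,a$ be compact for every $a\in C^*(X)$ (together with the analogous statement for $a$ on the left). This holds because $P_0a$ is compact for any locally compact finite-propagation $a$ (finitely many nonzero blocks $P_0aP_y$, each compact) and hence for all of $C^*(X)$ by density. With that correction inserted, your proof is complete and coincides with the paper's.
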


\begin{proof}
To begin, we need to show $$[m_{g,h}]_{\mathcal{K}}\ \alpha_{g}\circ \alpha_{h}(\ [a]_{\mathcal{K}}\ )=\alpha_{gh}(\ [a]_{\mathcal{K}}\ )[m_{g,h}]_{\mathcal{K}},$$

But since we already have $\alpha_{g}\circ \alpha_{h}=\alpha_{gh}$ on $C^{*}(X)$, it suffices to show the unitaries $[m_{g,h}]_{\mathcal{K}}$ are in the center of the multiplier algebra of $C^{*}(X)/\mathcal{K}$. In other words, we claim $m_{g,h}$ commutes with any operator $a\in C^{*}(X)$ up to a compact operator. It suffices to check this for locally compact operators $a$ with finite propagation $R$.  

Define the subset $X_{R}$ as the ball of radius $R+1$ in $X$ around the point $y_{0}$, which is finite by our bounded geometry assumption on $Y$. Let $P_{R}$ denote the projection in $B(\ell^{2}(X, H))$ onto functions supported in $X_{R}$. Let $Q_{R}$ denote its-complement, i.e. the projection onto functions supported in $X-X_{R}$. 

Note that $a$ must send functions $\eta\in \ell^{2}(X, H)$ supported in $Y_{g}\cap (X-X_{R})$ to functions that again supported in $Y_{g}-\{y_{0}\}$ for each $g$, since for $x\in Y_{g}\cap (X-X_{R})$ and $y\in Y_{h}\cap (X-X_{R})$, $d(x,y)>2R$, and $d(x,y_{0})\ge R+1$. Thus since $m_{g,h}$ acts by a constant scalar on any function $\eta \in \ell^{2}(X, H)$ supported in $Y_{g}-\{y_{0}\}$ for any $g$, we have $\left[ a,m_{g,h} \right] Q_{R}=0$.

Thus $$\left[a,m_{g,h}\right]=\left[a,m_{g,h}\right]Q_{R}+\left[a,m_{g,h}\right]P_{R}=\left[a,m_{g,h}\right]P_{R}.$$

\noindent But since the set $X_{R}$ is finite and $\left[a,m_{g,h}\right]$ is locally compact and finite propagation, $\left[a,m_{g,h}\right]P_{R}$ is compact as $X$ has bounded geometry. This gives us the claim.

Now we have to to show

$$\omega(g,h,k)[m_{gh,k}]_{\mathcal{K}}[m_{g,h}]_{\mathcal{K}}=[m_{g,hk}]_{\mathcal{K}}\alpha_{g}([m_{h,k}]_{\mathcal{K}}).$$

\noindent We need this equations to hold \textit{in the multiplier algebra} of $C^{*}(X)/\mathcal{K}$. For any $\eta\in \ell^{2}(X, H)$ and for any $x\in Y_{ghkl}- \{y_{0}\}$, we have

$$\omega(g,h,k)m_{gh,k}m_{g,h}(\eta(x))=\omega(g,h,k)\omega(gh,k,l)^{-1}\omega(g,h,kl)^{-1}\eta(x),$$

\noindent and

$$m_{g,hk}\alpha_{g}(m_{h,k})(\eta(x))=\omega(g,hk,l)^{-1}\omega(h,k,l)^{-1}\eta(x).$$

\noindent But these are equal by the definition of $3$-cocyle. Let $P_{0}$ be the projection in $\ell^{2}(X, H)$ to the space of functions supported on $y_{0}$, and $Q_{0}$ its complement. Then we have for any $a\in C^{*}(X)$

$$\omega(g,h,k)m_{gh,k}m_{g,h}a=\omega(g,h,k)m_{gh,k}m_{g,h}P_{0}\ a+\omega(g,h,k)m_{gh,k}m_{g,h}Q_{0}\ a, $$

\noindent and

$$\omega(g,hk,l)^{-1}\omega(h,k,l)^{-1}a=\omega(g,hk,l)^{-1}\omega(h,k,l)^{-1}P_{0}a+\omega(g,hk,l)^{-1}\omega(h,k,l)^{-1}Q_{0}\ a,$$

\noindent hence
\begin{align*}
 &(\omega(g,h,k)m_{gh,k}m_{g,h}-\omega(g,hk,l)^{-1}\omega(h,k,l)^{-1})a\\
 =&(\omega(g,h,k)m_{gh,k}m_{g,h}-\omega(g,hk,l)^{-1}\omega(h,k,l)^{-1})P_{0}a
\end{align*}

\noindent is compact. This gives us the desired equality in the multiplier algebra of $C^{*}(X)/\mathcal{K}$.

\end{proof}

\begin{rem}
We remark that the proofs in the above construction were not sensitive to the dimension of the Hilbert space $H$. Thus we could have selected $H=\mathbbm{C}$, in which case the Roe algebra in the above statement is replaced by the uniform Roe algebra $C^{*}_{u}(X)$. Thus it is also the case that every anomaly of a finite group is realized via an action on some uniform Roe corona as well.
\end{rem}

\bibliographystyle{amsalpha}
{\footnotesize{
\bibliography{bibliography}

\def\cprime{$'$}\newcommand{\noopsort}[1]{}
\providecommand{\bysame}{\leavevmode\hbox to3em{\hrulefill}\thinspace}
\providecommand{\MR}{\relax\ifhmode\unskip\space\fi MR }
\providecommand{\MRhref}[2]{%
  \href{http://www.ams.org/mathscinet-getitem?mr=#1}{#2}
}
\providecommand{\href}[2]{#2}
\begin{thebibliography}{EGNO15}

\bibitem[AE20]{MR4166625}
Andreas~N. Aaserud and David~E. Evans, \emph{Realizing the {B}raided
  {T}emperley-{L}ieb-{J}ones {C}*-{T}ensor {C}ategories as {H}ilbert
  {C}*-{M}odules}, Comm. Math. Phys. \textbf{380} (2020), no.~1, 103--130.
  \MR{4166625}

\bibitem[AFH07]{MR2345476}
R.~Alicki, M.~Fannes, and M.~Horodecki, \emph{A statistical mechanics view on
  {K}itaev's proposal for quantum memories}, J. Phys. A \textbf{40} (2007),
  no.~24, 6451--6467. \MR{2345476}

\bibitem[BF18]{BFV2018}
Bruno~M. Braga and A.~Farah, Ilijas~Vignati, \emph{Uniform {R}oe coronas},
  arXiv preprint, arXiv:1810.07789 (2018).

\bibitem[BHM10]{MR2762528}
Peter Bouwknegt, Keith~C. Hannabuss, and Varghese Mathai,
  \emph{{$C^*$}-algebras in tensor categories}, Motives, quantum field theory,
  and pseudodifferential operators, Clay Math. Proc., vol.~12, Amer. Math.
  Soc., Providence, RI, 2010, pp.~127--165. \MR{2762528}

\bibitem[Bla98]{MR1656031}
Bruce Blackadar, \emph{{$K$}-theory for operator algebras}, second ed.,
  Mathematical Sciences Research Institute Publications, vol.~5, Cambridge
  University Press, Cambridge, 1998. \MR{1656031}

\bibitem[BMZ13]{MR3056650}
Alcides Buss, Ralf Meyer, and Chenchang Zhu, \emph{A higher category approach
  to twisted actions on {$C^*$}-algebras}, Proc. Edinb. Math. Soc. (2)
  \textbf{56} (2013), no.~2, 387--426. \MR{3056650}

\bibitem[BR97]{MR1441540}
Ola Bratteli and Derek~W. Robinson, \emph{Operator algebras and quantum
  statistical mechanics. 2}, second ed., Texts and Monographs in Physics,
  Springer-Verlag, Berlin, 1997, Equilibrium states. Models in quantum
  statistical mechanics. \MR{1441540}

\bibitem[Bro82]{MR672956}
Kenneth~S. Brown, \emph{Cohomology of groups}, Graduate Texts in Mathematics,
  vol.~87, Springer-Verlag, New York-Berlin, 1982. \MR{672956}

\bibitem[BS10]{MR2664619}
John~C. Baez and Michael Shulman, \emph{Lectures on {$n$}-categories and
  cohomology}, Towards higher categories, IMA Vol. Math. Appl., vol. 152,
  Springer, New York, 2010, pp.~1--68. \MR{2664619}

\bibitem[BV18]{BV2020}
Bruno~M. Braga and A.~Vignati, \emph{A gelfand-type duality for coarse metric
  spaces with property $a$}, arXiv preprint arXiv:2007.10499 (2018).

\bibitem[Car87]{MR903016}
A.~L. Carey, \emph{The origin of three-cocycles in quantum field theory}, Phys.
  Lett. B \textbf{194} (1987), no.~2, 267--270. \MR{903016}

\bibitem[CC97]{MR1463819}
Ali~H. Chamseddine and Alain Connes, \emph{The spectral action principle},
  Comm. Math. Phys. \textbf{186} (1997), no.~3, 731--750. \MR{1463819}

\bibitem[CGRS95]{MR1324403}
A.~L. Carey, H.~Grundling, I.~Raeburn, and C.~Sutherland, \emph{Group actions
  on {$C^*$}-algebras, {$3$}-cocycles and quantum field theory}, Comm. Math.
  Phys. \textbf{168} (1995), no.~2, 389--416. \MR{1324403}

\bibitem[Con75]{MR394228}
Alain Connes, \emph{Outer conjugacy classes of automorphisms of factors}, Ann.
  Sci. \'{E}cole Norm. Sup. (4) \textbf{8} (1975), no.~3, 383--419. \MR{394228}

\bibitem[Con77]{MR448101}
\bysame, \emph{Periodic automorphisms of the hyperfinite factor of type {II}1},
  Acta Sci. Math. (Szeged) \textbf{39} (1977), no.~1-2, 39--66. \MR{448101}

\bibitem[Con94]{MR1303779}
\bysame, \emph{Noncommutative geometry}, Academic Press, Inc., San Diego, CA,
  1994. \MR{1303779}

\bibitem[CW01]{MR1874489}
Xiaoman Chen and Qin Wang, \emph{Notes on ideals of {R}oe algebras}, Q. J.
  Math. \textbf{52} (2001), no.~4, 437--446. \MR{1874489}

\bibitem[EG18]{EG2018}
David~E. Evans and Terry Gannon, \emph{Reconstruction and local extensions for
  twisted group doubles, and permutation orbifolds}, arXiv preprint,
  arXiv:1804.11145 (2018).

\bibitem[EGNO15]{EtGeNiOs2015}
Pavel Etingof, Shlomo Gelaki, Dmitri Nikshych, and Victor Ostrik, \emph{Tensor
  categories}, Mathematical Surveys and Monographs, vol. 205, American
  Mathematical Society, Providence, RI, 2015. \MR{3242743}

\bibitem[EGP21]{2105.05587}
Samuel Evington and Sergio Giron~Pacheco, \emph{Anomalous symmetries of
  classifiable c*-algebras}, 2021, \arxiv{2105.05587}.

\bibitem[EM47a]{MR19092}
Samuel Eilenberg and Saunders MacLane, \emph{Cohomology theory in abstract
  groups. {I}}, Ann. of Math. (2) \textbf{48} (1947), 51--78. \MR{19092}

\bibitem[EM47b]{MR20996}
\bysame, \emph{Cohomology theory in abstract groups. {II}. {G}roup extensions
  with a non-{A}belian kernel}, Ann. of Math. (2) \textbf{48} (1947), 326--341.
  \MR{20996}

\bibitem[EM19]{MR3927086}
Eske~Ellen Ewert and Ralf Meyer, \emph{Coarse geometry and topological phases},
  Comm. Math. Phys. \textbf{366} (2019), no.~3, 1069--1098. \MR{3927086}

\bibitem[ENO05]{EtNiOs2005}
Pavel Etingof, Dmitri Nikshych, and Viktor Ostrik, \emph{On fusion categories},
  Ann. of Math. (2) \textbf{162} (2005), no.~2, 581--642. \MR{2183279
  (2006m:16051)}

\bibitem[GJS10]{MR2732052}
A.~Guionnet, Vaughan F.~R. Jones, and D.~Shlyakhtenko, \emph{Random matrices,
  free probability, planar algebras and subfactors}, Quanta of maths, Clay
  Math. Proc., vol.~11, Amer. Math. Soc., Providence, RI, 2010, pp.~201--239.
  \MR{2732052}

\bibitem[GK02a]{MR1876897}
Erik Guentner and Jerome Kaminker, \emph{Addendum to: ``{E}xactness and the
  {N}ovikov conjecture''}, Topology \textbf{41} (2002), no.~2, 419--420.
  \MR{1876897}

\bibitem[GK02b]{MR1876896}
\bysame, \emph{Exactness and the {N}ovikov conjecture}, Topology \textbf{41}
  (2002), no.~2, 411--418. \MR{1876896}

\bibitem[Gre77]{MR453917}
Philip Green, \emph{{$C\sp*$}-algebras of transformation groups with smooth
  orbit space}, Pacific J. Math. \textbf{72} (1977), no.~1, 71--97. \MR{453917}

\bibitem[GY19]{MR3959065}
Luca Giorgetti and Wei Yuan, \emph{Realization of rigid {$C^\ast$}-tensor
  categories via {T}omita bimodules}, J. Operator Theory \textbf{81} (2019),
  no.~2, 433--479. \MR{3959065}

\bibitem[Hat02]{MR1867354}
Allen Hatcher, \emph{Algebraic topology}, Cambridge University Press,
  Cambridge, 2002. \MR{1867354}

\bibitem[Hat17]{Hatcher}
\bysame, \emph{Vector bundles and k-theory}, 2017.

\bibitem[HHP20]{MR4139893}
Michael Hartglass and Roberto Hern\'{a}ndez~Palomares, \emph{Realizations of
  rigid {$\rm C^*$}-tensor categories as bimodules over {GJS} {$\rm
  C^*$}-algebras}, J. Math. Phys. \textbf{61} (2020), no.~8, 081703, 32.
  \MR{4139893}

\bibitem[HJR]{15414}
HJRW, \emph{finite generated group realized as fundamental group of manifolds},
  MathOverflow, URL:https://mathoverflow.net/q/15414 (version: 2017-04-13).

\bibitem[HLY14]{MR3228486}
Hua-Lin Huang, Gongxiang Liu, and Yu~Ye, \emph{The braided monoidal structures
  on a class of linear {G}r-categories}, Algebr. Represent. Theory \textbf{17}
  (2014), no.~4, 1249--1265. \MR{3228486}

\bibitem[HP17]{MR3663592}
Andr\'{e} Henriques and David Penneys, \emph{Bicommutant categories from fusion
  categories}, Selecta Math. (N.S.) \textbf{23} (2017), no.~3, 1669--1708.
  \MR{3663592}

\bibitem[HW19]{MR4040015}
Logan Higginbotham and Thomas Weighill, \emph{Coarse quotients by group actions
  and the maximal {R}oe algebra}, J. Topol. Anal. \textbf{11} (2019), no.~4,
  875--907. \MR{4040015}

\bibitem[Izu93]{Iz1993}
Masaki Izumi, \emph{Subalgebras of infinite {$C^*$}-algebras with finite
  {W}atatani indices. {I}. {C}untz algebras}, Comm. Math. Phys. \textbf{155}
  (1993), no.~1, 157--182. \MR{1228532}

\bibitem[Izu98]{Iz1998}
\bysame, \emph{Subalgebras of infinite {$C^*$}-algebras with finite {W}atatani
  indices. {II}. {C}untz-{K}rieger algebras}, Duke Math. J. \textbf{91} (1998),
  no.~3, 409--461. \MR{1604162}

\bibitem[Jon79]{MR548118}
Vaughan F.~R. Jones, \emph{An invariant for group actions}, Alg\`ebres
  d'op\'{e}rateurs ({S}\'{e}m., {L}es {P}lans-sur-{B}ex, 1978), Lecture Notes
  in Math., vol. 725, Springer, Berlin, 1979, pp.~237--253. \MR{548118}

\bibitem[Jon80]{Jo1980}
\bysame, \emph{Actions of finite groups on the hyperfinite type {${\rm
  II}_{1}$}\ factor}, Mem. Amer. Math. Soc. \textbf{28} (1980), no.~237, v+70.
  \MR{587749}

\bibitem[JP17]{MR3687214}
Corey Jones and David Penneys, \emph{Operator algebras in rigid {$\rm
  C^*$}-tensor categories}, Comm. Math. Phys. \textbf{355} (2017), no.~3,
  1121--1188. \MR{3687214}

\bibitem[JP19]{MR3948170}
\bysame, \emph{Realizations of algebra objects and discrete subfactors}, Adv.
  Math. \textbf{350} (2019), 588--661. \MR{3948170}

\bibitem[Kub17]{MR3594362}
Yosuke Kubota, \emph{Controlled topological phases and bulk-edge
  correspondence}, Comm. Math. Phys. \textbf{349} (2017), no.~2, 493--525.
  \MR{3594362}

\bibitem[LR97]{LoRo1997}
R.~Longo and J.~E. Roberts, \emph{{A theory of dimension}}, K-Theory
  \textbf{11} (1997), no.~2, 103–159. \MR{1444286 (98i:46065)}

\bibitem[LS01]{MR1812024}
Roger~C. Lyndon and Paul~E. Schupp, \emph{Combinatorial group theory}, Classics
  in Mathematics, Springer-Verlag, Berlin, 2001, Reprint of the 1977 edition.
  \MR{1812024}

\bibitem[Mac49]{MR33287}
Saunders MacLane, \emph{Cohomology theory in abstract groups. {III}. {O}perator
  homomorphisms of kernels}, Ann. of Math. (2) \textbf{50} (1949), 736--761.
  \MR{33287}

\bibitem[Mey12]{MR2986862}
Ralf Meyer, \emph{Actions of higher categories on {$\rm C^*$}-algebras}, Topics
  in noncommutative geometry, Clay Math. Proc., vol.~16, Amer. Math. Soc.,
  Providence, RI, 2012, pp.~75--92. \MR{2986862}

\bibitem[MR05]{MR2116734}
Varghese Mathai and Jonathan Rosenberg, \emph{{$T$}-duality for torus bundles
  with {$H$}-fluxes via noncommutative topology}, Comm. Math. Phys.
  \textbf{253} (2005), no.~3, 705--721. \MR{2116734}

\bibitem[Mü03]{Mg2003}
Michael Müger, \emph{{From subfactors to categories and topology. {I}.
  {F}robenius algebras in and {M}orita equivalence of tensor categories}}, J.
  Pure Appl. Algebra \textbf{180} (2003), no.~1-2, 81–157. \MR{1966524
  (2004f:18013)}

\bibitem[Ocn88]{Oc1988}
Adrian Ocneanu, \emph{Quantized groups, string algebras and {G}alois theory for
  algebras}, Operator algebras and applications, {V}ol.\ 2, London Math. Soc.
  Lecture Note Ser., vol. 136, Cambridge Univ. Press, Cambridge, 1988,
  pp.~119--172. \MR{996454 (91k:46068)}

\bibitem[Ost03]{MR1976233}
Viktor Ostrik, \emph{Module categories over the {D}rinfeld double of a finite
  group}, Int. Math. Res. Not. (2003), no.~27, 1507--1520. \MR{1976233}

\bibitem[Pop95]{MR1334479}
Sorin Popa, \emph{An axiomatization of the lattice of higher relative
  commutants of a subfactor}, Invent. Math. \textbf{120} (1995), no.~3,
  427--445. \MR{1334479}

\bibitem[PR80]{MR589649}
John Phillips and Iain Raeburn, \emph{Automorphisms of {$C^{\ast} $}-algebras
  and second \v{C}ech cohomology}, Indiana Univ. Math. J. \textbf{29} (1980),
  no.~6, 799--822. \MR{589649}

\bibitem[Roe93]{MR1147350}
John Roe, \emph{Coarse cohomology and index theory on complete {R}iemannian
  manifolds}, Mem. Amer. Math. Soc. \textbf{104} (1993), no.~497, x+90.
  \MR{1147350}

\bibitem[Roe03]{MR2007488}
\bysame, \emph{Lectures on coarse geometry}, University Lecture Series,
  vol.~31, American Mathematical Society, Providence, RI, 2003. \MR{2007488}

\bibitem[Ros09]{MR2560910}
Jonathan Rosenberg, \emph{Topology, {$C^\ast$}-algebras, and string duality},
  CBMS Regional Conference Series in Mathematics, vol. 111, Published for the
  Conference Board of the Mathematical Sciences, Washington, DC; by the
  American Mathematical Society, Providence, RI, 2009. \MR{2560910}

\bibitem[RSW00]{MR1798596}
Iain Raeburn, Aidan Sims, and Dana~P. Williams, \emph{Twisted actions and
  obstructions in group cohomology}, {$C^*$}-algebras ({M}\"{u}nster, 1999),
  Springer, Berlin, 2000, pp.~161--181. \MR{1798596}

\bibitem[Sak91]{MR1136257}
Sh\^{o}ichir\^{o} Sakai, \emph{Operator algebras in dynamical systems},
  Encyclopedia of Mathematics and its Applications, vol.~41, Cambridge
  University Press, Cambridge, 1991, The theory of unbounded derivations in
  $C^*$-algebras. \MR{1136257}

\bibitem[Sut80]{Su1980}
Colin~E. Sutherland, \emph{Cohomology and extensions of von {N}eumann algebras.
  {I}, {II}}, Publ. Res. Inst. Math. Sci. \textbf{16} (1980), no.~1, 105--133,
  135--174. \MR{574031}

\bibitem[SW13]{MR3116573}
J\'{a}n Spakula and Rufus Willett, \emph{On rigidity of {R}oe algebras}, Adv.
  Math. \textbf{249} (2013), 289--310. \MR{3116573}

\bibitem[Wei94]{MR1269324}
Charles~A. Weibel, \emph{An introduction to homological algebra}, Cambridge
  Studies in Advanced Mathematics, vol.~38, Cambridge University Press,
  Cambridge, 1994. \MR{1269324}

\bibitem[Wil07]{MR2288954}
Dana~P. Williams, \emph{Crossed products of {$C{^\ast}$}-algebras},
  Mathematical Surveys and Monographs, vol. 134, American Mathematical Society,
  Providence, RI, 2007. \MR{2288954}

\bibitem[Yua19]{MR3935804}
Wei Yuan, \emph{Rigid {${\rm C}^*$}-tensor categories and their realizations as
  {H}ilbert {${\rm C}^*$}-bimodules}, Proc. Edinb. Math. Soc. (2) \textbf{62}
  (2019), no.~2, 367--393. \MR{3935804}

\end{thebibliography}
}}

\end{document}